\newcommand*{\rom}[1]{\expandafter\@slowromancap\romannumeral #1@}
\newcommand{\kb}[1]{{\color{magenta}\bf[KB: #1]}}
\newcommand{\ye}[1]{{\color{blue}\bf[YE: #1]}}
 {
      \theoremstyle{plain}
      \newtheorem{assumption}{Assumption}[section]
 }
\newcommand{\lv}{\left\lVert}
\newcommand{\rv}{\right\rVert}
\newcommand{\m}{\mathbf}
\newcommand{\mb}{\mathbb}
\newcommand{\mc}{\mathcal}
\newcommand{\Clow}{C_{\text{Low}}}
\newcommand{\Var}{\text{Var}}
\newcommand{\TV}{\mathrm{TV}}
\newcommand{\dee}{\mathrm{d}}
\newcommand{\id}{\mathrm{Id}}
\newcommand{\binner}[2]{\langle {#1}, {#2} \rangle}
\newcommand{\poly}{\mathrm{poly}}
\newcommand{\polylog}{\mathrm{polylog}}
\newcommand\mmid{\mathbin{\vert}}
\newtheorem{theorem}{Theorem}[section]
\newtheorem{question}{Q\hspace{-0.04in}}
\newtheorem{proposition}{Proposition}[section]
\newtheorem{definition}{Definition}[section]
\newtheorem{lemma}{Lemma}[section]
\newtheorem{remark}{Remark}[section]
\newtheorem{corollary}{Corollary}[section]
\newcommand{\CFPI}{C_{\text{\normalfont FPI}(\vartheta)}}
\newcommand{\CFPIalpha}{C_{\text{\normalfont FPI}(\alpha)}}
\newcommand{\CFPIone}{C_{\text{\normalfont FPI}(1)}}
\DeclareMathOperator*{\argmin}{arg\,min}
\newcommand{\Xstable}{X_{t}^{(\alpha)}}
\newcommand{\denstable}{p_t^{(\alpha)}}
\newcommand{\WFPI}{\beta_{\text{\normalfont wFPI}(\vartheta)}}
\newcommand{\WFPIalpha}{\beta_{\text{\normalfont WFPI}(\alpha)}}
\renewcommand*{\backref}[1]{\ifx#1\relax \else Page #1 \fi}
\renewcommand*{\backrefalt}[4]{%
  \ifcase #1 \footnotesize{(Not cited.)}%
  \or        \footnotesize{(Cited on page~#2.)}%
  \else      \footnotesize{(Cited on pages~#2.)}%
  \fi
}
\begin{document}
\title{A Separation in Heavy-Tailed Sampling:\\ Gaussian vs. Stable Oracles for Proximal Samplers}
\vspace{0.2in}

\author{
  \begin{tabular}{c@{\hskip 0.4in}c} 
 Ye He  &   Alireza Mousavi-Hosseini\\
  \normalsize Department of Mathematics &   \normalsize Department of Computer Science \\
  \normalsize Georgia Institute of Technology & \normalsize University of Toronto, and Vector Institute\\ 
\normalsize\texttt{yhe367@gatech.edu} &  \normalsize\texttt{mousavi@cs.toronto.edu} \\
\\
  Krishnakumar Balasubramanian &   
  Murat A. Erdogdu \\
   \normalsize Department of Statistics &   \normalsize Department of Computer Science and of Statistics \\
  \normalsize University of California, Davis &   \normalsize University of Toronto, and Vector Institute\\
\normalsize\texttt{kbala@ucdavis.edu} & \normalsize\texttt{erdogdu@cs.toronto.edu}
\end{tabular}
}

\vspace{0.1in}
\date{}

\maketitle

\begin{abstract}
\vspace{-.1in}
We study the complexity of heavy-tailed sampling and present  a separation result in terms of obtaining high-accuracy versus low-accuracy guarantees i.e., samplers that require only $\mathcal{O}(\log(1/\varepsilon))$ versus $\Omega(\text{poly}(1/\varepsilon))$ iterations
to output a sample which is $\varepsilon$-close to the target in $\chi^2$-divergence. Our results are presented for proximal samplers that are based on Gaussian versus stable oracles. We show that proximal samplers based on the Gaussian oracle have a fundamental barrier in that they necessarily achieve only low-accuracy guarantees when sampling from a class of heavy-tailed targets. In contrast, proximal samplers based on the stable oracle exhibit high-accuracy guarantees, thereby overcoming the aforementioned limitation. We also prove lower bounds for samplers under the stable oracle
and show that our upper bounds cannot be fundamentally improved.
\end{abstract}


\vspace{-.1in}
\section{Introduction}\label{sec:introduction}
\vspace{-.1in}
The task of sampling from heavy-tailed targets arises in various domains such as Bayesian statistics~\citep{gelman2008weakly,ghosh2018use}, machine learning~\citep{chandrasekaran2009sampling,balcan2017sample, nguyen2019non, simsekli2020fractional,diakonikolas2020learning}, robust statistics~\citep{kotz2004multivariate, jarner2007convergence, kamatani2018efficient,yang2022stereographic}, multiple comparison procedures~\citep{genz2004approximations, genz2009computation}, and study of geophysical systems~\citep{sardeshmukh2015understanding,qi2016predicting,provost2023adaptive}. This problem is particularly challenging when using gradient-based Markov Chain Monte Carlo (MCMC) algorithms due to diminishing gradients, 
which occurs when the tails of the target density
decay at a slow (e.g. polynomial) rate.
Indeed, canonical algorithms like Langevin Monte Carlo (LMC) have been empirically observed to perform poorly~\citep{li2019stochastic,huang2021approximation,he2023mean} when sampling from such heavy-tailed targets.

Several approaches have been proposed in the literature to overcome these limitations of LMC and related algorithms. The predominant ones include (i) transformation-based approaches, where a diffeomorphic (invertible) transformation is used to first map the heavy-tailed density to a light-tailed one so that a light-tailed sampling algorithm can be used~\citep{johnson2012variable,yang2022stereographic,he2023analysis}, (ii) discretizing general It\^{o} diffusions with non-standard Brownian motion that have heavy-tailed densities as their equilibrium density~\citep{erdogdu2018global,li2019stochastic,he2023mean}, and (iii) discretizing stable-driven stochastic differential equations~\citep{zhang2023ergodicity}. However, the few theoretical results available on the analysis of algorithms based on approaches (i) and (ii) provide only low-accuracy heavy-tailed samplers;
such algorithms require $\text{poly}(1/\varepsilon)$ iterations to obtain a sample that is $\varepsilon$-close to the target in a reasonable metric of choice. Furthermore, 
quantitative complexity guarantees for the sampling approach used in (iii) are not yet available; thus, existing comparisons are mainly based on empirical studies.

In stark contrast, when the target density is light-tailed it is well-known that algorithms like proximal samplers based on Gaussian oracles  and the Metropolis Adjusted Langevin Algorithm (MALA) have high-accuracy guarantees; these algorithms require only $\text{polylog}(1/\varepsilon)$ iterations to obtain a sample which is $\varepsilon$-close to the target in some metric. See, for example, the works by~\cite{dwivedi2019log,lee2021structured,wu2022minimax,chen2022improved,chen2023simple}. Specifically, \cite{lee2021structured} analyzed the proximal sampling algorithm to sample from a class of  strongly log-concave densities and obtained high-accuracy guarantees. \cite{chen2022improved} established similar high-accuracy guarantees for the proximal sampler to sample from target densities that satisfy a certain functional inequality,
covering a range of light-tailed densities with exponentially fast tail decay (e.g. log-Sobolev and Poincar\'e inequalities). However, it is not clear if the proximal sampler achieves the same desirable performance when the target is not light-tailed.

\begin{table}[t]
\label{tab:comparison}
    \centering
\renewcommand{\arraystretch}{1.5}
    \begin{tabular}{c|c|c|c|c|c|c|c|c}
        \cline{2-9}
   \multicolumn{1}{c}{}
   &\multicolumn{4}{|c|}{$\nu\ge 1$}
    &\multicolumn{4}{|c|}{$\nu\in (0,1)$}\\
   \hline
  \multicolumn{1}{|c|}{Oracle}
  &\multicolumn{2}{|c|}{\!Gaussian (Alg.~\ref{alg:proximal sampler})\!}
  &\multicolumn{2}{|c|}{Stable (Alg.~\ref{alg:fractional proximal sampler} \& \ref{alg:1-fractional proximal sampler})}
  &\multicolumn{2}{|c|}{Gaussian (Alg.~\ref{alg:proximal sampler})}
  &\multicolumn{2}{|c|}{Stable (Alg.~\ref{alg:fractional proximal sampler} \& \ref{alg:1-fractional proximal sampler})\!\!}\\
\hline
  \multicolumn{1}{|c|}{\!\!Complexity\!\!} 
    &\multicolumn{2}{|c|}{$\Tilde{\Omega}(\varepsilon^{-\frac{1}{\nu}})$ (Cor.~\ref{cor:lower bound cauchy brownian})} 
  &\multicolumn{2}{|c|}{$\mathcal{O}(\log(\varepsilon^{-1}))$ (Cor. \ref{cor:implementable case})}
  &\multicolumn{2}{|c|}{\!$\Tilde{\Omega}(\varepsilon^{-\frac{1}{\nu}})$ (Cor. \ref{cor:lower bound cauchy brownian})\!} 
  &\multicolumn{2}{|c|}{\!\!$\Tilde{\mathcal{O}}(\varepsilon^{-\frac{1}{\nu}+1})$ (Cor. \ref{cor:implementable case}) \!\!\!\!}  \\
  
  \hline
  \end{tabular}
    \renewcommand{\arraystretch}{1}
\caption{
\small
\!\textbf{Separation for Proximal Samplers: Gaussian vs. practical Stable oracles~($\alpha\!=\!1$):} Upper and lower iteration complexity bounds to generate an $\varepsilon$-accurate sample in $\chi^2$-divergence from the generalized Cauchy target densities with degrees of freedom $\nu$, i.e. $\pi_\nu \propto (1+|x|^2)^{-(d+\nu)/2}$. Here, $\Tilde{\Omega}$, $\Tilde{\mathcal{O}}$ hide constants depending on $\nu$ and $\text{polylog}(d,1/\varepsilon)$. For the proximal sampler with a general $\alpha$-Stable oracle (Algorithm~\ref{alg:fractional proximal sampler}), the upper bound for $\nu \in (0,1)$ is $\mathcal{O}(\log(1/\varepsilon))$ when $\alpha=\nu$. The lower bounds are from Corollary \ref{cor:lower bound cauchy brownian} via $2\TV^2 \le \chi^2$.
} 
\vspace{-.3in}
\end{table}

In light of existing results, in this work, we first consider the following question:
\begin{question}\label{question1}
\centering    
\hspace{.001in}
What are the fundamental limits of proximal samplers under the Gaussian\ \ \  \hspace{.65in}\\ \hspace{.35in} oracle when sampling from heavy-tailed targets?
\vspace{-.05in}
\end{question}
To answer this question, we construct lower bounds showing that Gaussian-based samplers necessarily require $\text{poly}(1/\varepsilon)$ iterations to sample from a class of heavy-tailed targets. 
{These results complement the lower bounds on the complexity of sampling from heavy-tailed densities using the LMC algorithm established in~\cite{mousavi23towards}}. 
With this lower bound in hand, we next consider the following question:
\begin{question}\label{question2}
\centering    
\hspace{.07in}Is it possible to design high-accuracy samplers for heavy-tailed targets?\ \ \ \ \
\vspace{-.05in}
\end{question}  
We answer this in the affirmative by constructing proximal samplers that are based on stable oracles (see Definition~\ref{def:raso} and Algorithm~\ref{alg:fractional proximal sampler}) by leveraging the fractional heat-flow corresponding to a class of stable-driven SDEs. We analyze the complexity of this algorithm when sampling from heavy-tailed densities that satisfy a fractional Poincar\'{e} inequality, and establish that they require only $\text{log}(1/\varepsilon)$ iterations.  
Together, our answers to {\bf Q1} and {\bf Q2} provide a clear separation between samplers based on Gaussian and stable oracles. Our contributions can be summarized as follows.
\vspace{-.05in}
\begin{itemize}[noitemsep,leftmargin=0.2in]
    \item \textit{Lower bounds for the Gaussian oracle}: In Section~\ref{sec:lower bound}, we focus on {\bf Q1} and establish in Theorems~\ref{thm:LD_cauchy_lowerbound} and~\ref{thm:proximal_cauchy_lowerbound} respectively that the Langevin diffusion and the proximal sampler based on the Gaussian oracle necessarily have a fundamental barrier when sampling from heavy-tailed densities. Our proof technique builds on~\cite{hairer2010convergence}, and provides a novel perspective for obtaining algorithm-dependent lower bounds for sampling, which may be of independent interest.
    \item \textit{A proximal sampler based on the stable oracle:} In Section~\ref{sec:fractionalproximal}, we introduce a proximal sampler based on the $\alpha$-stable oracle,
    which fundamentally relies on the exact implementations of the fractional heat flow that correspond to a stable-driven SDE. Here, the parameter $\alpha$ determines the allowed class of heavy-tailed targets which could be sampled with high-accuracy. In Theorem~\ref{thm:chi square convergence proximal sampler} and Proposition~\ref{prop:error accumulation}, we provide upper bounds on the iteration complexity that are of smaller order than the corresponding lower bounds established for the Gaussian oracle. We provide a rejection-sampling based implementation of the $\alpha$-stable oracle for the case $\alpha=1$ and prove complexity upper bounds in Corollary~\ref{cor:oracle complexity FPI}. Finally, in Theorem~\ref{thm:fractional_proximal_cauchy_lowerbound}, considering a sub-class of Cauchy-type targets, we prove lower bounds showing that our upper bounds cannot be fundamentally improved.
    \vspace{-.05in}
\end{itemize}
An illustration of our results for Cauchy target densities, 
$\pi_\nu \propto (1+|x|^2)^{-(d+\nu)/2}$
where $\nu$ is the degrees of freedom,
is provided in Table~\ref{tab:comparison}. We specifically consider the practical version of the stable proximal sampler with $\alpha=1$ (i.e., Algorithm~\ref{alg:fractional proximal sampler} with the stable oracle implemented by Algorithm~\ref{alg:1-fractional proximal sampler}),
and show that it always outperforms the Gaussian proximal sampler (Algorithm~\ref{alg:proximal sampler}). Indeed, when $\nu\ge 1$, the separation between these algorithms is obvious. In the case $\nu\in (0,1)$, Algorithm~\ref{alg:fractional proximal sampler}~\&~\ref{alg:1-fractional proximal sampler} has a poly($1/\varepsilon$) complexity, nevertheless, it still improves the complexity of the Gaussian proximal sampler by a factor of $\varepsilon$. We also show via lower bounds (in Section \ref{sec:fractioanl lower bound}) that the poly($1/\varepsilon$) complexity for Algorithm~\ref{alg:fractional proximal sampler}~\&~\ref{alg:1-fractional proximal sampler}, when $\nu\in (0,1)$, can only be improved up to certain factors. 
We remark that for the ideal proximal sampler (Algorithm~\ref{alg:fractional proximal sampler}), the upper bound when $\nu \in (0,1)$ is also $\mathcal{O}(\log(1/\varepsilon))$. These results demonstrate a clear separation between Gaussian and stable proximal samplers.
\smallskip

\textbf{Related works.} 
We first discuss works analyzing the complexity of heavy-tailed sampling as characterized by a functional inequality assumption.~\cite{chandrasekaran2009sampling} analyzed the connection between sampling algorithms for a class of $s$-concave densities satisfying a certain isoperimetry condition related to weighted Poincar\'e inequalities. ~\cite{he2023mean} undertook a mean-square analysis of discretization of a specific It\^{o} diffusion that characterizes a class of heavy-tailed densities satisfying a weighted Poincar\'e inequality.~\cite{andrieu2022comparison} and~\cite{andrieu2023weak} analyzed the complexity of pseudo-marginal MCMC algorithms and the random-walk Metropolis algorithm respectively, under weak Poincar\'e inequalities. As mentioned before, \cite{mousavi23towards} showed lower bounds for the LMC algorithm when the target density satisfies a weak Poincar\'e inequality.~\cite{he2023analysis} and~\cite{yang2022stereographic} analyzed a transformation based approach for heavy-tailed sampling under conditions closely related to the same functional inequality. This transformation methodology is also used to demonstrate asymptotic exponential ergodicity for other sampling algorithms like the bouncy particle sampler and the zig-zag sampler, in the heavy-tailed settings~\citep{deligiannidis2019exponential,durmus2020geometric,bierkens2019ergodicity}. These works provide only low-accuracy guarantees for heavy-tailed sampling and do not consider the use of weak Fractional Poincar\'e inequalities.

Recent years have witnessed a significant focus on (strongly) log-concave sampling, leading to an extensive body of work that is challenging to encapsulate succinctly. In the context of (strongly) log-concave or light-tailed distributions, a plethora of non-asymptotic investigations have been conducted on LMC variations, including advanced integrators~\citep{shen2019randomized, li2019stochastic,he2020ergodicity}, underdamped LMC \citep{cheng2018underdamped, eberle2019couplings, cao2019explicit, dalalyan2020sampling}, and MALA \citep{dwivedi2019log, lee2020logsmooth, chewi2021optimal, wu2021minimax}.
Outside the realm of log-concavity, the dissipativity assumption, which regulates the growth of the potential, has been used in numerous studies to derive convergence guarantees~\citep{Durmus2017-ww,raginsky2017non, erdogdu2018global, erdogdu2021convergence, mou2022improved, erdogdu2022convergence,balasubramanian2022towards}.

While research on upper bounds of sampling algorithms' complexity has advanced considerably, the exploration of lower bounds is still nascent. \cite{Chewi2022-qd} explored the query complexity of sampling from strongly log-concave distributions in one-dimensional settings. \cite{Li2021-rc} established lower bounds for LMC in sampling from strongly log-concave distributions. \cite{chatterji2022oracle} presented lower bounds for sampling from strongly log-concave distributions with noisy gradients. \cite{ge2020estimating} focused on lower bounds for estimating normalizing constants of log-concave densities. Contributions by \cite{lee2021lower} and \cite{wu2021minimax} provide lower bounds in the metropolized algorithm category, including Langevin and Hamiltonian Monte Carlo, in strongly log-concave contexts. Finally, \cite{Chewi2022-fv} contributed to lower bounds in Fisher information for non-log-concave sampling.


\vspace{-.1in}
\section{Lower Bounds for Sampling with the Gaussian Oracle}\label{sec:lower bound}
\vspace{-.1in}
In this section, we focus on {\bf Q1} for both the Langevin diffusion (in continuous time) and the proximal sampler (in discrete time), where both procedures have the target density as their invariant measures. Our results below illustrate the limitation of the Gaussian oracle\footnote{Here, for the sake of unified presentation, we refer the use of Brownian motion in~\eqref{eq:langevindiff} as Gaussian oracle.} for heavy-tailed sampling in both continuous and discrete time, showing that the phenomenon is not because of the discretization effect, but is inherently related to the use of Gaussian oracles.

\vspace{.01in}
\noindent\textbf{Langevin diffusion.}
We first start with
the overdamped Langevin diffusion (LD):
\begin{equation}\label{eq:langevindiff}\tag{LD}
    \dee X_t = -\nabla V(X_t)\dee t + \sqrt{2} \dee B_t.
\end{equation}
LD achieves high-accuracy ``sampling'' in continuous time, i.e.\ a $\polylog(1/\varepsilon)$ convergence rate in the light-tailed setting. 
We make the following dissipativity-type assumption.
\begin{assumption}\label{assump:V_growth_bound}
    The target density is given by $\pi^X(x) \propto \exp(-V(x))$, where $V : \mb{R}^d \to \mb{R}$ satisfies
    \begin{align*}
       \forall x \in \mb{R}^d, \quad \frac{(d + \nu_1)\vert x\vert^2}{1 + \vert x \vert^2} \leq \langle x, \nabla V(x)\rangle \leq \frac{(d+\nu_2)\vert x\vert^2}{1 + \vert x\vert^2} \quad \text{for some $\nu_2 \geq \nu_1 \geq 0$}.
    \end{align*}
\end{assumption}
\begin{remark}
    The upper bound on $\langle x, \nabla V(x)\rangle$
    ensures that $V$ grows at most logarithmically in $\vert x \vert$. Consequently, 
    $\pi^X$ is heavy-tailed and in fact does not satisfy a Poincar\'e inequality. 
    The lower bound on $\langle x, \nabla V(x)\rangle$ is only needed for deriving the
    dimension dependency in our guarantees.
    If one is only interested in the $\varepsilon$ dependency, this condition can be replaced with
    $0 \leq \langle x, \nabla V(x)\rangle$.  
\end{remark}
A classical example of a density satisfying the above assumption is the generalized Cauchy density with degrees of freedom  $\nu=\nu_1=\nu_2 > 0$, where the potential is given by
\begin{align}\label{eq:tpotential}
V_\nu(x) \coloneqq \frac{d+\nu}{2}\ln(1 + \vert x\vert^2).
\vspace{-.05in}
\end{align}
The following result, proved in Appendix~\ref{app:proof_proximal_lower}, provides a lower bound on the performance of LD.
\begin{theorem}\label{thm:LD_cauchy_lowerbound}
    Suppose $\pi^X \propto \exp(-V)$ satisfies Assumption~\ref{assump:V_growth_bound}. Let $X_t$ be the solution of the Langevin diffusion, and $\mu_t \coloneqq \operatorname{Law}(X_t)$. Then, for any $\delta > 0$,
    $$\TV(\pi^X,\mu_t) \geq C_{\nu_1,\nu_2}d^{\tfrac{\nu_1-\nu_2}{2}(1 + \delta)}\left(C_\delta(\mu_0) + \kappa_\delta t\right)^{-\tfrac{\nu_2(1+\delta)}{2}},$$
    where $\kappa_\delta \coloneqq 1 \lor \frac{2}{d+\nu_2} \lor \frac{\nu_2(1+\delta)}{(d+\nu_2)\delta}$, $C_\delta(\mu_0) \coloneqq \frac{1}{d+\nu_2}\mb{E}[(1 + |X_0|^2)^{\gamma}]^{1/\gamma}$ with $\gamma={\kappa_\delta(d+\nu_2)}/{2}$, and $C_{\nu_1,\nu_2}$ is a constant depending only on $\nu_1$ and $\nu_2$.
\end{theorem}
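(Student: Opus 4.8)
The plan is to exhibit a suitable test functional that decays slowly along the Langevin semigroup and then use a moment argument to lower bound the total variation distance. The natural candidate, following the philosophy of Hairer--Mattingly--Scheutzow--type Lyapunov arguments (and the reference \cite{hairer2010convergence}), is a polynomial moment of the form $W_\gamma(x) \coloneqq (1 + |x|^2)^{\gamma}$ for a carefully chosen exponent $\gamma$. First I would compute the generator action $\mathcal{L} W_\gamma = -\langle \nabla V, \nabla W_\gamma\rangle + \Delta W_\gamma$. Using $\nabla W_\gamma(x) = 2\gamma(1+|x|^2)^{\gamma-1} x$ and the dissipativity bounds of Assumption~\ref{assump:V_growth_bound}, the drift term contributes at most $-2\gamma(d+\nu_1)\frac{|x|^2}{1+|x|^2}(1+|x|^2)^{\gamma-1}$, while the Laplacian term is of order $\gamma(1+|x|^2)^{\gamma-1}(d + \text{lower order})$. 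The key point is that because $V$ grows only logarithmically, the drift cannot beat the diffusion: one obtains a differential inequality of the form $\mathcal{L} W_\gamma \le C\, W_{\gamma - 1} + (\text{small}) W_\gamma$, i.e.\ the drift provides at best a gain of one power of $(1+|x|^2)$, not a genuine contraction. Integrating, $\frac{\dee}{\dee t}\mb{E}[W_\gamma(X_t)] \lesssim \mb{E}[W_{\gamma-1}(X_t)]^{\,\cdot}$, and by an interpolation/Gr\"onwall argument this yields a polynomial-in-$t$ growth bound $\mb{E}[W_\gamma(X_t)] \le (C_\delta(\mu_0) + \kappa_\delta t)^{\gamma}$ after rescaling, which is exactly where the constants $\kappa_\delta$ and $C_\delta(\mu_0)$ in the statement come from (the $\delta$ is the slack needed to absorb the $W_\gamma$ term on the right-hand side into the Gr\"onwall estimate).

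Next I would convert this moment growth into the claimed TV lower bound. The idea is that if $\mu_t$ were TV-close to $\pi^X$, then since $\pi^X$ has a heavy tail with $\mb{E}_{\pi^X}[W_\gamma] = \infty$ for $\gamma$ large enough (precisely $\gamma \ge \nu/2$ for the Cauchy-type tail, so one needs $\gamma$ in the relevant range relative to $\nu_2$), the measure $\mu_t$ would be forced to place too much mass far out. Quantitatively, for a threshold $R$, $\pi^X(\{|x| > R\}) \gtrsim R^{-\nu_2}$ (up to dimensional constants governed by $\nu_1, \nu_2$), whereas by Markov's inequality $\mu_t(\{|x|>R\}) \le \mb{E}[W_\gamma(X_t)] R^{-2\gamma}$. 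Choosing $R = R(t)$ so that the $\pi^X$ tail mass at radius $R$ is comparable to $\mb{E}[W_\gamma(X_t)]^{1/(2\gamma)}$-scale, and using $\TV(\pi^X,\mu_t) \ge \pi^X(\{|x|>R\}) - \mu_t(\{|x|>R\})$, optimizing over $R$ produces a bound of the form $\TV(\pi^X,\mu_t) \gtrsim \mb{E}[W_\gamma(X_t)]^{-\nu_2/(2\gamma)}$. Plugging in the moment growth bound and tracking the dimension dependence through the $d$-dependent constants in the tail probability of $\pi^X$ and in $C_\delta(\mu_0)$ gives the stated exponent $d^{\frac{\nu_1-\nu_2}{2}(1+\delta)}$ and the factor $(C_\delta(\mu_0)+\kappa_\delta t)^{-\nu_2(1+\delta)/2}$.

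The main obstacle, I expect, is getting the dimension dependence sharp and clean. The crude moment computation gives the $t$-dependence almost immediately, but extracting the correct power of $d$ requires carefully balancing: (i) the $d$-dependent lower bound on the tail mass of the heavy-tailed target $\pi^X$ under only the two-sided bound on $\langle x,\nabla V\rangle$ (which pins down the tail exponent to lie between $\nu_1$ and $\nu_2$ but not exactly), (ii) the $d$-dependence in the Laplacian term $\Delta W_\gamma$ which forces $\gamma$ to scale like $d$ (hence the appearance of $\gamma = \kappa_\delta(d+\nu_2)/2$), and (iii) the $d$-dependence hidden in the initial moment $C_\delta(\mu_0)$. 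The interplay of these is what produces the somewhat intricate constants $\kappa_\delta$ and the exponent involving $(\nu_1-\nu_2)/2$; a secondary technical point is justifying the moment ODE rigorously (finiteness of moments along the flow, e.g.\ via a localization/stopping-time argument), but this is standard. The role of $\delta > 0$ is precisely to buy room in the Gr\"onwall step so that the lower-order $W_\gamma$ contribution does not spoil the polynomial bound; sending $\delta \to 0$ recovers the expected $t^{-\nu_2/2}$ rate at the cost of worse constants.
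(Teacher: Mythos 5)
Your proposal follows essentially the same route as the paper: the test function $W_\gamma(x)=(1+|x|^2)^{\gamma}$ with $\gamma=\kappa(d+\nu_2)/2$ is exactly the paper's $G=\exp(\kappa\tilde V)$, the generator computation plus Jensen gives the same polynomial moment growth $\mb{E}[W_\gamma(X_t)]\le(C_\delta(\mu_0)+\kappa_\delta t)^{\gamma}$, and the tail-versus-Markov comparison is precisely the Hairer-type lemma the paper invokes. One small correction: the slack $\delta$ does not arise in the Gr\"onwall step (the lower bound $\langle x,\nabla V\rangle\ge 0$ kills the drift term entirely, so there is no residual $W_\gamma$ contribution to absorb); it comes from the final optimization over the threshold, where the exponent is $\nu_2/(2\gamma-\nu_2)$ rather than $\nu_2/(2\gamma)$ and the constraint $\kappa\ge\nu_2(1+\delta)/((d+\nu_2)\delta)$ is what caps the resulting exponent at $\nu_2(1+\delta)/2$.
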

\vspace{-.05in}
If we assume $\vert X_0\vert \leq \mathcal{O}(\sqrt{d})$ for simplicity, then by choosing $\delta = \frac{2\ln\ln t}{\nu_2 \ln t}\land \frac{2\ln\ln d}{(\nu_2-\nu_1)\ln d}$, we obtain
    $$\TV(\pi^X,\mu_t) \geq \tilde{\Omega}_{\nu_1,\nu_2}(d^{\tfrac{\nu_1-\nu_2}{2}}t^{-\tfrac{\nu_2}{2}}).$$
Thus, LD requires at least $T = \tilde{\Omega}_{\nu_1,\nu_2}\big(d^{\frac{\nu_1-\nu_2}{\nu_2}}(1/\varepsilon)^{2/\nu_2}\big)$ to reach $\varepsilon$ error in total variation. 
While this bound may be small in high dimensions when $\nu_2 > \nu_1$, 
for the canonical model of Cauchy-type potentials with $\nu_2=\nu_1=\nu$, it will be independent of dimension, as stated by the following result. 
Note that Assumption~\ref{assump:V_growth_bound} can also cover a general scaling by replacing $\vert x\vert$ with $c\vert x\vert$ for some constant $c$, which would introduce a multiplicative factor of $1/c^2$ for the lower bound on $T$. This is expected as e.g., mixing to the Gibbs potential $c^2\vert x\vert^2$ can be faster than mixing to $\vert x\vert^2$ by a factor of $1/c^2$.
\begin{corollary}\label{eq:cauchycorr}
    Consider the generalized Cauchy density $\pi^X_\nu \propto \exp(-V_\nu)$ where $V_\nu$ is as in \eqref{eq:tpotential}.
    Let $X_t$ be the solution of the Langevin diffusion, and $\mu_t \coloneqq \mathrm{Law}(X_t)$. For simplicity, assume the initialization satisfies $\vert X_0\vert \leq \mathcal{O}(\sqrt{d})$. Then, achieving $\TV(\pi^X_\nu,\mu_T) \leq \varepsilon$ requires $
        T \geq \tilde{\Omega}_\nu\big(\varepsilon^{-\frac{2}{\nu}}\big)$.
\end{corollary}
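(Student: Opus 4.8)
The plan is to specialize Theorem~\ref{thm:LD_cauchy_lowerbound} to the generalized Cauchy potential $V_\nu$, for which $\nu_1 = \nu_2 = \nu$, and then optimize over the free parameter $\delta$. With $\nu_1 = \nu_2 = \nu$, the dimension-dependent prefactor $d^{\frac{\nu_1-\nu_2}{2}(1+\delta)}$ collapses to $1$, so the lower bound reads $\TV(\pi^X_\nu,\mu_t) \geq C_\nu\, (C_\delta(\mu_0) + \kappa_\delta t)^{-\nu(1+\delta)/2}$. First I would control the two $\delta$-dependent quantities $\kappa_\delta$ and $C_\delta(\mu_0)$ in the regime of interest. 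For $\kappa_\delta$: since $\kappa_\delta = 1 \lor \frac{2}{d+\nu} \lor \frac{\nu(1+\delta)}{(d+\nu)\delta}$, as long as $\delta$ is not too small (concretely $\delta \gtrsim \nu/(d\delta)$, i.e.\ $\delta \gtrsim 1/\sqrt d$ roughly, which is implied by the choice below in large $d$), we have $\kappa_\delta = O(1)$; more carefully one keeps $\kappa_\delta \le 1 + \frac{\nu(1+\delta)}{(d+\nu)\delta}$ and checks it is bounded by an absolute constant for the chosen $\delta$. For $C_\delta(\mu_0) = \frac{1}{d+\nu}\,\mb{E}[(1+|X_0|^2)^\gamma]^{1/\gamma}$ with $\gamma = \kappa_\delta(d+\nu)/2$: using the hypothesis $|X_0| \le O(\sqrt d)$, the random variable $1 + |X_0|^2$ is deterministically $O(d)$, hence $\mb{E}[(1+|X_0|^2)^\gamma]^{1/\gamma} = O(d)$ regardless of $\gamma$, so $C_\delta(\mu_0) = O(1)$.

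Next I would plug in the stated choice $\delta = \frac{2\ln\ln t}{\nu \ln t} \wedge \frac{2\ln\ln d}{(\nu_2 - \nu_1)\ln d}$; since here $\nu_1 = \nu_2$ the second term is vacuous (infinite), so effectively $\delta = \frac{2\ln\ln t}{\nu \ln t}$. With this choice, $t^{\delta} = \exp(\delta \ln t) = \exp\big(\frac{2\ln\ln t}{\nu}\big) = (\ln t)^{2/\nu}$, so $t^{1+\delta} = t\,(\ln t)^{2/\nu}$, and therefore $(C_\delta(\mu_0) + \kappa_\delta t)^{-\nu(1+\delta)/2} = \Omega\big((O(1) + O(t))^{-\nu(1+\delta)/2}\big) = \Omega\big(t^{-\nu/2} (\ln t)^{-1}\big)$ up to $\nu$-dependent constants. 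This yields $\TV(\pi^X_\nu, \mu_t) \geq \tilde\Omega_\nu(t^{-\nu/2})$, matching the displayed bound in the text (with the $\tilde\Omega$ absorbing the $\polylog$ factor). Inverting: to force $\TV(\pi^X_\nu,\mu_T) \le \varepsilon$ we need $\tilde\Omega_\nu(T^{-\nu/2}) \le \varepsilon$, i.e.\ $T \ge \tilde\Omega_\nu(\varepsilon^{-2/\nu})$, which is the claim.

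The only real subtlety — the rest is bookkeeping — is making sure the $\delta$-dependent constants $\kappa_\delta$ and $C_\delta(\mu_0)$ genuinely stay $O(1)$ (equivalently, $\polylog$) for the chosen $\delta$ and do not secretly reintroduce a power of $t$ or a bad power of $d$ through the exponent $\gamma$; the $|X_0| = O(\sqrt d)$ assumption is exactly what neutralizes the potentially enormous exponent $\gamma \asymp d$ in $C_\delta(\mu_0)$, since it makes the base of the power $O(d)$ and the $1/\gamma$ root brings it back to $O(d)$, and then the overall $\frac{1}{d+\nu}$ normalization makes it $O(1)$. I would also note in passing that the reduction uses $2\TV^2 \le \chi^2$ only in the reverse direction elsewhere; here the bound is stated directly in $\TV$, so no divergence comparison is needed. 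One should finally double-check the edge behavior of $\delta \to 0$ as $t \to \infty$: $\delta$ decreasing to $0$ is harmless for $\kappa_\delta$ as long as $\delta \ln t \to \infty$ slowly, which holds since $\delta \ln t = \frac{2}{\nu}\ln\ln t \to \infty$, so $\frac{\nu(1+\delta)}{(d+\nu)\delta}$ stays bounded for $d$ fixed and $t$ large enough; this is the one place to be careful about the order of limits in $d$ versus $t$, but for the asymptotic-in-$\varepsilon$ statement with $d$ treated as a fixed parameter it is fine.
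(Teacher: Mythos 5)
Your proposal is correct and follows the paper's own route exactly: the corollary is obtained by specializing Theorem~\ref{thm:LD_cauchy_lowerbound} to $\nu_1=\nu_2=\nu$ (so the dimension prefactor is $1$), choosing $\delta = \tfrac{2\ln\ln t}{\nu\ln t}$ so that $t^{\nu\delta/2}=\ln t$, bounding $C_\delta(\mu_0)=\mathcal{O}(1)$ via $|X_0|\le\mathcal{O}(\sqrt d)$, and inverting. One small correction to your bookkeeping: for fixed $d$ and $t\to\infty$ the term $\tfrac{\nu(1+\delta)}{(d+\nu)\delta}$ in $\kappa_\delta$ does \emph{not} stay bounded --- it grows like $\tfrac{\nu^2\ln t}{2(d+\nu)\ln\ln t}$ --- but since $\kappa_\delta^{-\nu(1+\delta)/2}$ then costs only a $\polylog(t)=\polylog(1/\varepsilon)$ factor, this is absorbed into $\tilde\Omega_\nu$ and the conclusion is unaffected.
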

The above lower bound implies that LD is a low-accuracy ``sampler'' for this target density in the sense that it depends polynomially on $1/\varepsilon$; this dependence gets worse with smaller $\nu$ as the tails get heavier.
It is worth highlighting the gap between the upper bound of \cite[Corollary 8]{mousavi23towards}, which is $ \tilde{\mathcal{O}}\big(1/\varepsilon^{4/\nu}\big)$, and the lower bound in Corollary~\ref{eq:cauchycorr}.

\vspace{.01in}
\noindent \textbf{Gaussian proximal sampler.}  In the remainder of this section, we prove that the Gaussian proximal sampler, described in Algorithm~\ref{alg:proximal sampler}, also suffers from a $\poly(1/\varepsilon)$ rate when the target density is heavy-tailed. In each iteration of Algorithm~\ref{alg:proximal sampler}, the first step involves sampling a standard Gaussian random variable $y_k$ centered at the current iterate $x_k$ with variance $\eta I$; this is a one-step isotropic Brownian random walk. Alternatively, since the Fokker-Planck equation of the standard Brownian motion is the classical heat equation, this step could also be interpreted as an exact simulation of the heat flow; see, for example,~\cite{carlen2003constrained} and \cite{wibisono2018sampling}. Specifically, the density of $y_k$ is the solution to the heat flow at time $\eta$ with the initial condition being the density of $x_k$. The second step is called the restricted Gaussian oracle (RGO) as coined by \cite{lee2021structured}; under which $(x_k,y_k)$ is a reversible Markov chain whose stationary density has $x$-marginal $\pi^X$.

\setlength{\textfloatsep}{10pt}
\renewcommand{\COMMENT}[2][.3\linewidth]{%
  \leavevmode\hfill\makebox[#1][l]{//~#2}}
\begin{algorithm}[t]
    \caption{Gaussian Proximal Sampler \citep{lee2021structured}}\label{alg:proximal sampler}
          {\bf Input: } Sample $x_0$, and step $\eta>0$.
            \FOR{\textbf{for}\ $k=0,1,\cdots, N-1$}\COMMENT{\small\texttt{Gibbs sampler} }
            
            \hspace{.2in}Sample $y_k\sim \pi^{Y|X}(\cdot|x_k)=\mathcal{N}(x_k,\eta I_d)$\COMMENT{\small\texttt{Heat flow} }
            
            \hspace{.2in}Sample $x_{k+1}|y_k\sim \pi^{X|Y}(\cdot|y_k)\propto \pi^X(\cdot)\exp\big(\frac{-\vert\ \cdot\ - y_k\vert^2}{2\eta}\big)$\COMMENT{\small\texttt{Calls to RGO} }\\
            \hspace{.2in}\textbf{return} $x_N$
\end{algorithm}
\begin{assumption}\label{assump:V_growth_bound_prox}
    For some $\nu_2 \geq \nu_1 \geq 0$,
    the target $\pi^X(x) \propto \exp(-V(x))$ with $V : \mb{R}^d \to \mb{R}$ satisfies
    \begin{align*}
       \forall x \in \mb{R}^d \quad
       \frac{(d + \nu_1)\vert x\vert^2}{1 + \vert x \vert^2} \leq \langle x, \nabla V(x)\rangle, \quad
\vert \nabla V(x)\vert \leq \frac{(d+\nu_2)\vert x\vert}{1 + \vert x\vert^2}, \quad \Delta V(x)  \leq \frac{(d + \nu_2)^2}{1 + \vert x \vert^2}.
    \end{align*}
\end{assumption}
The first condition above also appears in Assumption~\ref{assump:V_growth_bound} and
the second condition implies the upper bound of Assumption~\ref{assump:V_growth_bound}; thus, the above assumption is stronger. Note that the generalized Cauchy measure~\eqref{eq:tpotential} satisfies this assumption with $\nu_1=\nu_2=\nu$.
Under Assumption~\ref{assump:V_growth_bound_prox}, we state the following lower bound on the Gaussian proximal sampler and defer its proof to Appendix~\ref{app:proof_proximal_lower}.

\begin{theorem}\label{thm:proximal_cauchy_lowerbound}
    Suppose $\pi^X \!\propto\! \exp(-V)$ satisfies Assumption~\ref{assump:V_growth_bound_prox}. Let $x_k$ denote the $k$\textsuperscript{th} iterate of the Gaussian proximal sampler (Algorithm~\ref{alg:proximal sampler}) with step $\eta$ and let $\rho^X_k \coloneqq \operatorname{Law}(x_k)$. Then, for any $\delta > 0$,
    $$\TV(\pi^X,\rho^X_k) \geq C_{\nu_1,\nu_2}d^{\tfrac{\nu_1-\nu_2}{2}(1 + \delta)} \left(C_\delta(\mu_0) + \kappa_\delta \eta k\right)^{-\tfrac{\nu_2(1+\delta)}{2}},$$
    where $\kappa_\delta$, $C_\delta(\mu_0)$, and $C_{\nu_1,\nu_2}$ are defined in Theorem~\ref{thm:LD_cauchy_lowerbound}.
\end{theorem}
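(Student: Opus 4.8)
\textbf{Proof proposal for Theorem~\ref{thm:proximal_cauchy_lowerbound}.}

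The plan is to mirror the strategy used for the continuous-time Langevin diffusion in Theorem~\ref{thm:LD_cauchy_lowerbound}, adapting it to the discrete-time two-step dynamics of the Gaussian proximal sampler. The core idea, following the Lyapunov/moment approach of~\cite{hairer2010convergence}, is to exhibit a test function $\Phi(x) = (1 + |x|^2)^{\theta}$ for a suitable exponent $\theta > 0$ whose expectation under $\pi^X$ is infinite (because $\pi^X$ is heavy-tailed) but grows only controllably along the iterates $\rho^X_k$; a bound of the form $\mathbb{E}_{\rho^X_k}[\Phi] \le (C_\delta(\mu_0) + \kappa_\delta \eta k)^{\text{something}}$ then forces $\rho^X_k$ to be far from $\pi^X$ in total variation, since any coupling must move a nonnegligible amount of mass out to the tails. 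Quantitatively, if $\mathbb{E}_{\rho^X_k}[\Phi] \le M_k$ while $\pi^X$ places mass $\gtrsim R^{-\nu_2}$ beyond radius $R$, optimizing the radius at which the two measures are forced to disagree yields the stated $\TV$ lower bound with the appropriate powers of $d$ and $\kappa_\delta \eta k$.

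The main technical step is the one-iteration moment recursion: I need to show that a single pass through the Gibbs sampler — the heat-flow step $y_k \sim \mathcal{N}(x_k, \eta I_d)$ followed by the RGO step $x_{k+1} \sim \pi^{X|Y}(\cdot \mid y_k)$ — inflates $\mathbb{E}[\Phi(x_{k+1})]$ by at most an additive constant of order $\kappa_\delta \eta$ (up to the $\theta$-dependent geometry), rather than multiplicatively. For the heat-flow step this is a direct Gaussian computation: $\mathbb{E}[\Phi(y_k) \mid x_k]$ expands via the moments of the centered Gaussian increment, and because $\Phi$ grows polynomially the increment contributes $O(\eta)$ relative to $\Phi(x_k)$ plus lower-order terms controlled by the dimension $d$ (this is where the $\frac{2}{d+\nu_2}$ term in $\kappa_\delta$ enters). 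For the RGO step I would use that $x_{k+1}$ given $y_k$ is distributed according to $\pi^X$ tilted by a Gaussian centered at $y_k$; here the first condition of Assumption~\ref{assump:V_growth_bound_prox}, $\langle x, \nabla V(x)\rangle \ge \tfrac{(d+\nu_1)|x|^2}{1+|x|^2}$, provides a drift toward the origin that ensures the RGO does not increase the $\Phi$-moment — and the bounds on $|\nabla V|$ and $\Delta V$ let me control the relevant integration-by-parts / Gaussian-tilting estimates quantitatively (analogously to how the upper bound on $\langle x,\nabla V\rangle$ and the Laplacian bound were used in the LD analysis). Composing the two steps and iterating $k$ times produces the linear-in-$k$ growth $C_\delta(\mu_0) + \kappa_\delta \eta k$, with the initial condition $C_\delta(\mu_0)$ absorbing $\mathbb{E}[(1+|X_0|^2)^\gamma]^{1/\gamma}$.

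I expect the RGO step to be the genuine obstacle, since unlike the clean Gaussian convolution of the heat-flow step, it involves the implicitly-defined tilted measure $\pi^X(\cdot)\exp(-|\cdot - y_k|^2/2\eta)$, and I must show its $\Phi$-moment is dominated by $\Phi(y_k)$ up to controlled error. The strategy here is to note that this tilted measure is itself the invariant-type object for which a Lyapunov inequality $\mathcal{L}\Phi \le -c\Phi + b$ can be derived from Assumption~\ref{assump:V_growth_bound_prox}, using that the effective potential $V(x) + |x-y_k|^2/2\eta$ has gradient $\nabla V(x) + (x - y_k)/\eta$ whose inner product with $x$ is bounded below thanks to the first assumption; the quadratic confinement term only helps. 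Once the per-step bound $\mathbb{E}_{x_{k+1}\mid x_k}[\Phi(x_{k+1})] \le \Phi(x_k) + \kappa_\delta' \eta$ (with $\kappa_\delta'$ matching the definition in Theorem~\ref{thm:LD_cauchy_lowerbound} up to constants) is in hand, the remaining steps — summing over $k$, choosing $\theta$ as a function of $\delta$ and $\nu_2$ so that $\mathbb{E}_{\pi^X}[\Phi] = \infty$ while the tail/radius optimization gives the exponent $-\tfrac{\nu_2(1+\delta)}{2}$, and extracting the $d^{\frac{\nu_1-\nu_2}{2}(1+\delta)}$ prefactor from the normalization of $\pi^X$ — are routine and parallel Theorem~\ref{thm:LD_cauchy_lowerbound} verbatim.
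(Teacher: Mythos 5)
Your high-level strategy (the Hairer-type bound of Lemma~\ref{lem:tv_lower_bound}: a test function with divergent $\pi^X$-expectation but controlled growth along the iterates, combined with the tail bound of Lemma~\ref{lem:cauchy_lower_tail}) is exactly the paper's. But your central quantitative claim --- the per-iteration recursion $\mathbb{E}[\Phi(x_{k+1})]\le \Phi(x_k)+O(\kappa_\delta\eta)$ for $\Phi(x)=(1+|x|^2)^{\theta}$, hence linear-in-$k$ growth of $\mathbb{E}_{\rho_k^X}[\Phi]$ itself --- fails in the regime of $\theta$ you need, and this breaks the exponent. Already for the pure heat-flow half-step, $\frac{\dee}{\dee t}\mathbb{E}[\Phi(x+B_t)]=\tfrac12\mathbb{E}[\Delta\Phi]\asymp \theta d\,\mathbb{E}[\Phi^{1-1/\theta}]$, so it is $\mathbb{E}[\Phi]^{1/\theta}$ that grows linearly in $t$ and $\mathbb{E}[\Phi(B_{k\eta})]\asymp(dk\eta)^{\theta}$; additive growth of $\mathbb{E}[\Phi]$ itself holds only for $\theta\le 1$. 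With $\theta\le1$ and $M_k\lesssim M_0+\theta d\eta k$, the optimization in Lemma~\ref{lem:tv_lower_bound} yields $\TV\gtrsim M_k^{-\nu_2/(2\theta-\nu_2)}$, whose exponent is at best $\nu_2/(2-\nu_2)=\tfrac{\nu_2}{2}\bigl(1+\tfrac{\nu_2}{2-\nu_2}\bigr)$ as $\theta\to1^-$; so you could only reach $\delta\ge\nu_2/(2-\nu_2)$, and nothing at all when $\nu_2\ge2$, whereas the theorem requires every $\delta>0$. The paper goes the opposite way: it takes $G=\exp(\kappa V)$ with $\kappa\ge 1\lor\frac{\nu_2(1+\delta)}{(d+\nu_2)\delta}$, i.e.\ a \emph{large} exponent $\theta=\kappa(d+\nu_2)/2$, proves in Lemmas~\ref{lem:prox_g_rec} and~\ref{lem:proximal_cauchy_moment_growth} that $\mathbb{E}[G(x_k)]^{2/(\kappa(d+\nu_2))}$ grows linearly in $k\eta$, and the resulting exponent $\nu_2/(2-\nu_2/\theta)$ is then $\le\nu_2(1+\delta)/2$. (This is also why $C_\delta(\mu_0)$ in the statement is the $1/\gamma$-th root of a $\gamma$-moment rather than a $\theta$-moment itself.)

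Your treatment of the RGO step is also a different, and unnecessarily heavy, route. The paper's Lemma~\ref{lem:prox_g_rec} never invokes a drift condition there: because $G=e^{\kappa V}$, one has $\mathbb{E}[G(x_{k+1})\mid y_k]=\mathbb{E}[G(y_k+\sqrt{\eta}z)^{1-1/\kappa}\mid y_k]\,/\,\mathbb{E}[G(y_k+\sqrt{\eta}z)^{-1/\kappa}\mid y_k]$, and two applications of Jensen (concave power $1-1/\kappa$ upstairs, convex power $-1/\kappa$ downstairs, valid since $\kappa\ge1$) give $\mathbb{E}[G(x_{k+1})\mid y_k]\le\mathbb{E}[G(y_k+\sqrt{\eta}z)\mid y_k]$. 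Thus one full iteration is dominated by a heat flow of time $2\eta$, and only the bounds on $|\nabla V|$ and $\Delta V$ from Assumption~\ref{assump:V_growth_bound_prox} enter, in the subsequent generator computation. Your proposed Lyapunov inequality for the tilted measure $\propto e^{-V(x)-|x-y_k|^2/(2\eta)}$ could perhaps be made rigorous, but it is delicate (that measure concentrates at the prox point, not at $y_k$), it is not carried out, and the claim that the RGO leaves the moment non-increasing is stronger than needed and not what the paper establishes --- the correct sufficient statement is that the RGO costs no more than another Gaussian convolution. If you switch to the large-exponent test function and track $\mathbb{E}[G]^{1/\theta}$, the remainder of your outline does parallel Theorem~\ref{thm:LD_cauchy_lowerbound} as you say.
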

Above, assuming $\vert X_0\vert \leq \mathcal{O}(\sqrt{d})$ with the same choice of $\delta$ as in Theorem~\ref{thm:LD_cauchy_lowerbound} yields $\TV(\pi^X_\nu,\rho^X_k) \geq \tilde{\Omega}_{\nu_1,\nu_2}\big(d^{\frac{\nu_1-\nu_2}{2}}(k\eta)^{\frac{-\nu_2}{2}}\big).$
 Note that in order for the RGO step to be efficiently implementable, we need to have a sufficiently small $\eta$. The state-of-the-art implementation of RGO requires a step size of order $\eta = \tilde{\mc{O}}(1/(Ld^{1/2}))$ when $V$ has $L$-Lipschitz gradients~\citep{fan2023improved}. With this choice of step size, the above lower bound requires at least $N = \tilde{\Omega}_{\nu_1,\nu_2}\big(Ld^{1/2 + (\nu_1-\nu_2)/\nu_2}(1/\varepsilon)^{2/\nu_2}\big)$ iterations. 
 The assumptions in Theorem~\ref{thm:proximal_cauchy_lowerbound} once again cover the canonical examples of generalized Cauchy densities, where we have $L = d + \nu$, which simplifies the lower bound as follows.
\begin{corollary}\label{cor:lower bound cauchy brownian}
    Consider the generalized Cauchy density $\pi^X_\nu \propto \exp(-V_\nu)$ where $V_\nu$ is as in \eqref{eq:tpotential}.
    Let $x_k$ denote the $k\textsuperscript{th}$ iterate of the Gaussian proximal sampler, and define $\rho^X_k \coloneqq \mathrm{Law}(x_k)$, and choose the step size $\eta = \tilde{\mathcal{O}}(1/(Ld^{1/2}))$.
    If we assume $\vert X_0\vert \leq \mathcal{O}(\sqrt{d})$ for simplicity, then achieving $\TV(\pi^X_\nu,\rho^X_N) \leq \varepsilon$ requires
$N \geq \tilde{\Omega}_{\nu}\big(d^{\frac{3}{2}}\varepsilon^{-\frac{2}{\nu}}\big)$
iterations.
\end{corollary}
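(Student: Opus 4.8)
The plan is to derive Corollary~\ref{cor:lower bound cauchy brownian} as a direct specialization of Theorem~\ref{thm:proximal_cauchy_lowerbound} to the generalized Cauchy potential $V_\nu$, tracking the dimension and step-size dependencies. First I would check that $V_\nu$ satisfies Assumption~\ref{assump:V_growth_bound_prox} with $\nu_1=\nu_2=\nu$: a direct computation gives $\nabla V_\nu(x) = (d+\nu)\,x/(1+|x|^2)$, so $\langle x,\nabla V_\nu(x)\rangle = (d+\nu)|x|^2/(1+|x|^2)$ and $|\nabla V_\nu(x)| = (d+\nu)|x|/(1+|x|^2)$, matching the first two conditions with equality; and $\Delta V_\nu(x) = (d+\nu)\big(d/(1+|x|^2) - 2|x|^2/(1+|x|^2)^2\big) \le (d+\nu)d/(1+|x|^2) \le (d+\nu)^2/(1+|x|^2)$, verifying the third. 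I would also record that $\nabla V_\nu$ is $L$-Lipschitz with $L = \sup_x \|\nabla^2 V_\nu(x)\|_{\mathrm{op}} \asymp d+\nu$, since $\nabla^2 V_\nu(x) = (d+\nu)\big(I/(1+|x|^2) - 2xx^\top/(1+|x|^2)^2\big)$ has operator norm bounded by $(d+\nu)/(1+|x|^2) \le d+\nu$; hence $L \le d+\nu = \mathcal{O}(d)$ (for fixed $\nu$).

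Next I would plug $\nu_1=\nu_2=\nu$ into the bound of Theorem~\ref{thm:proximal_cauchy_lowerbound}. The prefactor $d^{(\nu_1-\nu_2)(1+\delta)/2}$ collapses to $1$, so
\[
\TV(\pi^X_\nu,\rho^X_k) \;\ge\; C_\nu \left(C_\delta(\mu_0) + \kappa_\delta\,\eta k\right)^{-\nu(1+\delta)/2}.
\]
Under $|X_0|\le \mathcal{O}(\sqrt d)$ we have $1+|X_0|^2 = \mathcal{O}(d)$, and since $\kappa_\delta = 1 \lor \tfrac{2}{d+\nu} \lor \tfrac{\nu(1+\delta)}{(d+\nu)\delta}$ is $\mathcal{O}(1)$ for the choice $\delta = \tfrac{2\ln\ln(\eta k)}{\nu\ln(\eta k)}$ (and $d$ large), the moment term $C_\delta(\mu_0) = \tfrac{1}{d+\nu}\mathbb{E}[(1+|X_0|^2)^\gamma]^{1/\gamma}$ is also $\mathcal{O}(1)$. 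With this choice of $\delta$, $(1+\delta)$ contributes only a $\polylog$ factor, so $\TV(\pi^X_\nu,\rho^X_k) \ge \tilde\Omega_\nu\big((\eta k)^{-\nu/2}\big)$ exactly as in the displayed consequence after Theorem~\ref{thm:proximal_cauchy_lowerbound}.

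Finally I would convert this into an iteration lower bound. Demanding $\TV(\pi^X_\nu,\rho^X_N)\le\varepsilon$ forces $(\eta N)^{-\nu/2} \le \tilde{\mathcal{O}}(\varepsilon)$, i.e. $\eta N \ge \tilde\Omega_\nu(\varepsilon^{-2/\nu})$, hence $N \ge \tilde\Omega_\nu\big(\varepsilon^{-2/\nu}/\eta\big)$. Substituting the prescribed step size $\eta = \tilde{\mathcal{O}}(1/(Ld^{1/2})) = \tilde{\mathcal{O}}(1/(d^{3/2}))$ (using $L = \mathcal{O}(d)$ from the first step) yields $N \ge \tilde\Omega_\nu\big(d^{3/2}\varepsilon^{-2/\nu}\big)$, which is the claim. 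The only genuinely delicate point is bookkeeping the $\delta$-dependent constants $\kappa_\delta$ and $C_\delta(\mu_0)$: one must confirm that with the stated near-optimal choice of $\delta \to 0$ these remain $\mathrm{polylog}(d,1/\varepsilon)$ rather than blowing up, which is exactly the regime handled in the discussion following Theorem~\ref{thm:LD_cauchy_lowerbound} and requires no new argument — everything else is routine substitution.
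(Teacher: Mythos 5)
Your proposal is correct and follows essentially the same route as the paper: the corollary is obtained by specializing Theorem~\ref{thm:proximal_cauchy_lowerbound} to $\nu_1=\nu_2=\nu$ (so the dimension prefactor collapses to $1$), noting $L=d+\nu$ for the Cauchy potential, taking $\delta$ of order $\ln\ln(\eta k)/\ln(\eta k)$ so that $\kappa_\delta$, $C_\delta(\mu_0)$, and the $(1+\delta)$ exponent cost only polylog factors, and then substituting $\eta=\tilde{\mathcal{O}}(1/(Ld^{1/2}))$ into $\eta N\geq\tilde{\Omega}_\nu(\varepsilon^{-2/\nu})$. Your explicit verification of Assumption~\ref{assump:V_growth_bound_prox} and of the Lipschitz constant for $V_\nu$ is exactly the bookkeeping the paper leaves implicit.
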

We emphasize that the above lower bound is of order $\text{poly}(1/\varepsilon)$ as advertised. Thus, the RGO-based proximal sampler can only yield a low-accuracy guarantee in this setting.

\vspace{-.1in}
\section{Stable Proximal Sampler and the Restricted $\alpha$-Stable Oracle}\label{sec:fractionalproximal}
\vspace{-.1in}

Having characterized the limitations of Gaussian oracles for heavy-tailed sampling, thereby answering {\bf Q1}, in what follows, 
we will focus on {\bf Q2} and construct proximal samplers based on the $\alpha$-stable oracle, and prove that they achieve high-accuracy guarantees when sampling from heavy-tailed targets.
First, we provide a basic overview of $\alpha$-stable processes and fractional heat flows.   

\noindent\textbf{Isotropic \texorpdfstring{$\alpha$}{}-stable process.} 
For $t\geq0$, let $\Xstable$ be the isotropic
stable L\'{e}vy process in $\mb{R}^d$, starting from $x\in \mb{R}^d$, with the index of stability $\alpha\in (0,2]$, defined uniquely via its characteristic function 
$
    \mb{E}_x e^{i\langle \xi, \Xstable-x \rangle}=e^{-t |\xi|^\alpha}
$.
When $\alpha=2$, $X_t^{(2)}$ is a scaled Brownian motion, and when $0<\alpha<2$, it becomes a pure L\'{e}vy jump process in $\mb{R}^d$. 
The transition density of $X_t^{(\alpha)}$ is then given by 
\begin{align}\label{eq:transition density alpha stable}
     p^{(\alpha)}(t;x,y)=\denstable(y-x)\quad \text{ with }\quad \denstable(y)=(2\pi)^{-d} \int_{\mb{R}^d} \exp(-t|\xi|^{\alpha}) e^{-i \langle \xi, y \rangle} \dee \xi,
\end{align}
where the second equation above is the inverse Fourier transform of the characteristic function, thus returns the density.
The transition kernel and the density in~\eqref{eq:transition density alpha stable} have closed-form expressions for the special cases $\alpha=1, 2$. In particular, when $\alpha=1$, 
$p_t^{(1)}$ reduces to a Cauchy density 
with degrees of freedom $\nu=1$, i.e. $p_t^{(1)}(y)\propto  (|y|^2+t^2)^{-(d+1)/2}$. We finally note that the isotropic stable L\'{e}vy process $\Xstable$ displays self-similarity like the Brownian motion; the processes $X^{(\alpha)}_{at}$ and $ a^{1/\alpha} X_t^{(\alpha)}$ have the same distribution. This property is crucial in the development of the stable proximal sampler.

\vspace{0.01in}
\noindent \textbf{Fractional heat flow.}  The equation $\partial_t u(t,x)= -(-\Delta)^{\alpha/2} u(t,x)$ with the condition $u(0,x)=u_0(x)$ is an extension of the classical heat flow, and is referred to as the
fractional heat flow. 
Here, $-(-\Delta)^{\alpha/2}$ is the fractional Laplacian operator with $\alpha\in (0,2]$, which is the infinitesimal generator of the isotropic $\alpha$-stable process. For $\alpha=2$, it reduces to the standard Laplacian operator $\Delta$. 

\vspace{.01in}
\noindent\textbf{Stable proximal sampler.} Let $\pi(x,y)$ be a joint density such that $\pi(x,y)\propto \pi^X(x) p^{(\alpha)}(\eta;x,y) $, where 
$\pi^X$ is the target and 
$p^{(\alpha)}(\eta;x,y)$ is the transition density of the $\alpha$-stable process, introduced in \eqref{eq:transition density alpha stable}. It is easy to verify that (i) the $X$-marginal of $\pi$ is $\pi^X$, (ii) the conditional density of $Y$ given $X$ is $\pi^{Y|X}(\cdot|x)= p^{(\alpha)}(\eta;x,\cdot)$, (iii) the $Y$-marginal is $\pi^Y=\pi^X \ast p^{(\alpha)}_\eta$, i.e.\ $\pi^Y$ is obtained by evolving $\pi^X$ along the $\alpha$-fractional heat flow for time $\eta$, and (iv) the conditional density of $X$ given $Y$ is $\pi^{X|Y}(\cdot|y)\propto \pi^X(\cdot)p^{(\alpha)}(\eta;\cdot,y).$ Based on these, we introduce the following stable oracle.
\begin{definition}[Restricted $\alpha$-Stable Oracle]\label{def:raso}
Given $y\in \mb{R}^d$, an oracle that outputs a random vector distributed according to $\pi^{X|Y}(\cdot|y)$, is called the Restricted $\alpha$-Stable Oracle (R$\alpha$SO).    
\end{definition}

Note that when $\alpha=2$, the R$\alpha$SO reduces to the RGO of~\cite{lee2021structured}. The Stable Proximal Sampler (Algorithm~\ref{alg:fractional proximal sampler}) with parameter $\alpha$ is initialized at a point $x_0\in \mb{R}^d$ and performs Gibbs sampling on the joint density $\pi$. In each iteration, the first step involves sampling an isotropic $\alpha$-stable random vector $y_k$ centered at the current iterate $x_k$, which is a one-step isotropic $\alpha$-stable random walk. This could also be interpreted as an exact simulation of the fractional heat flow. Indeed, due to the relation between the fractional heat flow and the isotropic stable process, the density of $y_k$ is exactly the solution to the $\alpha$-fractional heat flow at time $\eta$ with the initial condition being the density of $x_k$. When $\alpha=2$, the first step reduces to an isotropic Brownian random walk and a simulation of the classical heat flow. The second step calls the R$\alpha$SO at the point $y_k$. 

\setlength{\textfloatsep}{10pt}
\begin{algorithm}[t]
    \caption{Stable Proximal Sampler with parameter $\alpha$}\label{alg:fractional proximal sampler}
          {\bf Input: } Sample $x_0$, step $\eta>0$, and $\alpha\in (0,2)$. 
            
            \FOR{\textbf{for}\ $k=0,1,\cdots, N-1$}\COMMENT{\small\texttt{Gibbs sampler} }
            
            \hspace{.2in}Sample $y_k\sim \pi^{Y|X}(\cdot|x_k)=p^{(\alpha)}(\eta;x_k,\cdot)$\COMMENT{\small\texttt{Fractional heat flow} }
            
            \hspace{.2in}Sample $x_{k+1}|y_k\sim \pi^{X|Y}(\cdot|y_k)\propto \pi^X(\cdot)p^{(\alpha)}(\eta;\cdot,y_k)$\COMMENT{\small\texttt{Calls to R$\alpha$SO} }\\
\hspace{.2in}\textbf{return} $x_N$
\end{algorithm}

\vspace{-.1in}
\subsection{Convergence guarantees}
\vspace{-.1in}
We next provide convergence guarantees for the stable proximal sampler in $\chi^2$-divergence assuming access to the R$\alpha$SO. Similar results for a practical implementation are presented in Section~\ref{sec:application}. To proceed, we introduce the fractional Poincar\'{e} inequality, first introduced in \cite{wang2015functional} to characterize a class of heavy-tailed densities including the canonical Cauchy class.  
\begin{definition}[Fractional Poincar\'{e} Inequality]
For $\vartheta\in (0,2)$, a probability density $\mu$ satisfies a $\vartheta$-\textit{fractional Poincar\'{e} inequality} {{(FPI)}} if there exists a positive constant $\CFPI$ such that for any function $\phi:\mb{R}^d\to \mb{R}$ in the domain of $\mc{E}^{(\vartheta)}_{\mu}$, we have 
   \begin{align}\label{eq:Poincare fractional}\tag{FPI}
    \text{Var}_{\mu}(\phi)\le \CFPI \mc{E}^{(\vartheta)}_{\mu}(\phi).
    \end{align}
    where $\mc{E}^{(\vartheta)}_{\mu}$ is a non-local Dirichlet form associated with $\mu$ defined as $$\mc{E}^{(\vartheta)}_{\mu}(\phi):=c_{d,\vartheta}\!\iint_{\{x\neq y\}} \!\!\!\!\frac{(\phi(x)-\phi(y))^2}{|x-y|^{(d+\vartheta)}} \dee x \mu(y)\dee y\quad \text{ with }\quad c_{d,\vartheta}=\frac{2^\vartheta\Gamma((d+\vartheta)/2)}{\pi^{d/2}|\Gamma(-\vartheta/2)|}.$$
\end{definition}

\begin{remark}\label{tempremark} 
FPI is a weaker condition than Assumption \ref{assump:V_growth_bound_prox}. In fact, any density satisfying the first 2 conditions in Assumption \ref{assump:V_growth_bound_prox} satisfies $\vartheta$-FPI for all $\vartheta<\nu_1$~\citep[Theorem 1.1]
{wang2015functional}. In Proposition~\ref{prop:fpitopi}, we show that as $\vartheta\to 2^-$, FPI becomes equivalent to the  standard Poincar\'e inequality. 
\end{remark}

In the sequel, $\rho_k^X$ denotes the law of $x_k$, $\rho_k^Y$ denotes the law of $y_k$, and $\rho_k=\rho_k^{X,Y}$ is the joint law of $(x_k,y_k)$. We provide the following convergence guarantee under an FPI, proved in Appendix~\ref{appen:convergence under FPI}.

\begin{theorem}\label{thm:chi square convergence proximal sampler} Assume that $\pi^X$ satisfies the $\alpha$-FPI with parameter $\CFPIalpha$ for $\alpha\in (0,2)$. For any step size $\eta>0$ and initial density $\rho^X_0$, the $k$\textsuperscript{th} iterate of Algorithm~\ref{alg:fractional proximal sampler}, with parameter $\alpha$, satisfies
\begin{align*}
    \chi^2(\rho^X_k|\pi^X)\le \exp\left( -k\eta \left(\CFPIalpha+\eta\right)^{-1} \right)\chi^2(\rho^X_0|\pi^X).
\end{align*}
\vspace{-.25in}
\end{theorem}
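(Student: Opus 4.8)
The plan is to establish a one-step contraction in $\chi^2$-divergence for the two half-steps of the Gibbs sampler, combining a data-processing-type inequality for the $X\mid Y$ step with an explicit decay estimate for the fractional heat flow in the $Y\mid X$ step. The structure mirrors the analysis of the Gaussian proximal sampler in \cite{chen2022improved}, but with the classical heat semigroup replaced by the fractional one. First I would recall that, because $(x_k,y_k)$ evolves as a Gibbs sampler on the joint density $\pi(x,y)\propto\pi^X(x)\,p^{(\alpha)}(\eta;x,y)$, the backward step $x_{k+1}\sim\pi^{X\mid Y}(\cdot\mid y_k)$ is a Markov kernel that leaves $\pi$ invariant; hence, by the data-processing inequality for $\chi^2$ (conditional expectation is a contraction on $L^2(\pi)$), we have $\chi^2(\rho^X_{k+1}\mid\pi^X)\le\chi^2(\rho^Y_k\mid\pi^Y)$. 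So it suffices to bound $\chi^2(\rho^Y_k\mid\pi^Y)$ in terms of $\chi^2(\rho^X_k\mid\pi^X)$, where $\rho^Y_k=\rho^X_k\ast p^{(\alpha)}_\eta$ and $\pi^Y=\pi^X\ast p^{(\alpha)}_\eta$ are both the time-$\eta$ evolutions of $\rho^X_k$ and $\pi^X$ under the $\alpha$-fractional heat flow.

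The heart of the argument is therefore a differential inequality for $t\mapsto\chi^2(\mu_t\mid\pi_t)$ along the fractional heat flow, where $\mu_t$ and $\pi_t$ solve $\partial_t u=-(-\Delta)^{\alpha/2}u$ with initial data $\rho^X_k$ and $\pi^X$ respectively. Writing $h_t\coloneqq\mathrm{d}\mu_t/\mathrm{d}\pi_t$, one computes $\frac{\mathrm d}{\mathrm dt}\chi^2(\mu_t\mid\pi_t)=\frac{\mathrm d}{\mathrm dt}\mathbb{E}_{\pi_t}[(h_t-1)^2]$; the two contributions (from $\partial_t\mu_t$ and from $\partial_t\pi_t$, via $\pi_t$ itself solving the fractional heat equation) should combine, after integration by parts against the non-local generator, into $-2\,\mathcal{E}^{(\alpha)}_{\pi_t}(h_t)$ — exactly the non-local Dirichlet form appearing in the FPI. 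This is the standard "the $\chi^2$-divergence between two heat flows dissipates at twice the Dirichlet form of the ratio'' identity, specialized to the fractional Laplacian. A key point here is that $\pi_t$, for $t\in[0,\eta]$, is not $\pi^X$ but a perturbed density; to close the loop one needs the FPI to hold not just for $\pi^X$ but along the flow, or — more simply, following \cite{chen2022improved} — one instead runs the comparison with the \emph{stationary} measure by a reversibility/self-adjointness argument: since the fractional heat semigroup $P_t^{(\alpha)}$ is self-adjoint in $L^2(\mathrm{Leb})$ and $\pi^Y=P_\eta^{(\alpha)}\pi^X$, the ratio $\mathrm d\rho^Y_k/\mathrm d\pi^Y$ can be written as a $P_\eta^{(\alpha)}$-type average of $\mathrm d\rho^X_k/\mathrm d\pi^X$, and the FPI for $\pi^X$ together with the spectral decay $e^{-t\lambda}$ on the $\mathcal{E}^{(\alpha)}$-eigenspaces yields $\chi^2(\rho^Y_k\mid\pi^Y)\le\frac{\CFPIalpha}{\CFPIalpha+\eta}\,\chi^2(\rho^X_k\mid\pi^X)$. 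Combining with the backward-step contraction gives the per-iteration factor $\CFPIalpha/(\CFPIalpha+\eta)$, and iterating $k$ times with $1-x\le e^{-x}$ (applied to $x=\eta/(\CFPIalpha+\eta)$) produces the claimed $\exp(-k\eta(\CFPIalpha+\eta)^{-1})$ bound.

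The main obstacle I anticipate is making the fractional-flow $\chi^2$-dissipation rigorous: the non-local generator $-(-\Delta)^{\alpha/2}$ has a singular kernel, so the integration-by-parts step producing $-2\mathcal{E}^{(\alpha)}_{\pi_t}(h_t)$ requires care about domains, integrability of $h_t$ and its fractional energy, and justifying differentiation under the integral. A clean way around this is to avoid differentiating the flow altogether and argue purely spectrally/via semigroup interpolation: decompose $g\coloneqq\mathrm d\rho^X_k/\mathrm d\pi^X-1$, note $\mathbb{E}_{\pi^X}[g]=0$, and show that the map $g\mapsto\mathrm d\rho^Y_k/\mathrm d\pi^Y-1$ is exactly the action of a self-adjoint contraction whose operator norm on the mean-zero subspace, under the FPI $\Var_{\pi^X}\le\CFPIalpha\,\mathcal{E}^{(\alpha)}_{\pi^X}$, is at most $(1+\eta/\CFPIalpha)^{-1/2}$ in the relevant sense; squaring gives the $\chi^2$ bound. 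The secondary nuisance is verifying that $\pi^Y=\pi^X\ast p^{(\alpha)}_\eta$ is genuinely the $X$-marginal's partner in the Gibbs chain and that all densities involved are finite and the $\chi^2$ quantities well-defined, which follows from properties (i)–(iv) of $\pi$ already recorded before Definition~\ref{def:raso}. Everything else — the data-processing inequality and the final exponential iteration — is routine.
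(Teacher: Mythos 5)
Your overall skeleton matches the paper's: a data-processing inequality for the R$\alpha$SO step, plus a dissipation estimate for $\chi^2(\rho^X_t\ast p^{(\alpha)}_t \,|\, \pi^X\ast p^{(\alpha)}_t)$ along the fractional heat flow, where the time derivative equals minus the non-local Dirichlet form $\mc{E}^{(\alpha)}_{\pi_t}(\rho_t/\pi_t)$ (as a side note, with the paper's non-symmetrized normalization of $\mc{E}^{(\vartheta)}_\mu$ the prefactor is $1$, not $2$). You also correctly identify the crux: the FPI is assumed only for $\pi^X$, but the Gronwall step needs it for the evolved measures $\pi^X\ast p^{(\alpha)}_t$, $t\in(0,\eta]$.

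However, you do not actually close this gap, and the two workarounds you offer in its place do not work. The paper's resolution is concrete and consists of two ingredients you are missing: (a) the stable transition density $p^{(\alpha)}_t$ itself satisfies the $\alpha$-FPI with constant $t$ (cited from Chafa\"i's Theorem 23), and (b) a convolution-subadditivity lemma (Lemma~\ref{lem:subadditivity of FPI}): if $\mu_1,\mu_2$ satisfy $\vartheta$-FPI with constants $C_1,C_2$, then $\mu_1\ast\mu_2$ satisfies it with constant $C_1+C_2$, proved by the variance decomposition $\Var(\phi(X+Y))=\mb{E}[\Var(\phi(X+Y)\mid Y)]+\Var(\mb{E}[\phi(X+Y)\mid Y])$ together with Jensen. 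These give the uniform constant $\CFPIalpha+\eta$ along the flow and hence the rate $\exp(-\eta/(\CFPIalpha+\eta))$ per iteration. Your proposed spectral substitute, by contrast, is unsound: the fractional heat semigroup $P^{(\alpha)}_t$ is self-adjoint with respect to \emph{Lebesgue} measure, not $\pi^X$, and the Dirichlet form $\mc{E}^{(\alpha)}_{\pi^X}$ appearing in the FPI is \emph{not} the Dirichlet form of $P^{(\alpha)}_t$ in $L^2(\pi^X)$ — the forward map $g\mapsto \dee\rho^Y/\dee\pi^Y-1$ is the conditional-expectation operator of the joint Gibbs measure, which shares no eigenbasis with the generator underlying $\mc{E}^{(\alpha)}_{\pi^X}$, so there is no "spectral decay $e^{-t\lambda}$ on the $\mc{E}^{(\alpha)}$-eigenspaces" to invoke, and the claimed operator-norm bound $(1+\eta/\CFPIalpha)^{-1/2}$ is unjustified. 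To repair the proof you must supply the convolution stability of the FPI and the FPI for the stable kernel; with those in hand, your differential-inequality route is exactly the paper's argument.
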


As a consequence of Remark~\ref{tempremark} and Proposition~\ref{prop:fpitopi}, we recover the result in~\cite[Theorem 4]{chen2022improved}, by letting $\alpha\to 2^-$. While our results in Theorem~\ref{thm:chi square convergence proximal sampler} are based on Algorithm~\ref{alg:fractional proximal sampler} which requires exact calls to R$\alpha$SO, the next result, proved in Appendix~\ref{appen:RGO}, shows that even with an inexact implementation of R$\alpha$SO, the error accumulation is at most linear, and Algorithm~\ref{alg:fractional proximal sampler} still converges quickly. 

\begin{proposition}\label{prop:error accumulation} Suppose the R$\alpha$SO in Algorithm~\ref{alg:fractional proximal sampler} is implemented inexactly, i.e.\ there exists a positive constant $\varepsilon_{\TV}$ such that $\TV(\Tilde{\rho}^{X|Y}_k(\cdot|y),\rho^{X|Y}_k(\cdot|y))\le \varepsilon_{\TV}$ for all $y\in \mb{R}^d$ and $k\ge 1$, where $\Tilde{\rho}^{X|Y}_k(\cdot|y)$ is the density of the inexact R$\alpha$SO sample conditioned on $y$. Let $\Tilde{\rho}^X_k$ be the density of the output of the  $k$\textsuperscript{th} step of Algorithm~\ref{alg:fractional proximal sampler} with the inexact R$\alpha$SO and ${\rho}^X_k$ be the density of the output of $k$\textsuperscript{th} step Algorithm~\ref{alg:fractional proximal sampler} with the exact R$\alpha$SO. Then, for all $k\ge 0$, $$\TV(\Tilde{\rho}^X_k,\rho^X_k)\le \TV(\Tilde{\rho}^X_0,\rho^X_0) +k\,\varepsilon_{\TV}.$$ 
Further, if $\Tilde{\rho}_0^X=\rho_0^X$, 
for any $K\ge K_0$, we get $\TV(\Tilde{\rho}_X^K,\pi^X)\le \varepsilon$, if $\varepsilon_{\TV}\le {\varepsilon}/{2K}$, where the constant
$K_0=(1+\CFPIalpha\eta^{-1})\log\big(\chi^2(\Tilde{\rho}^X_0|\pi^X)/\varepsilon^2\big)$ with $\CFPIalpha$ being the $\alpha$-FPI parameter of $\pi^X$.
\end{proposition}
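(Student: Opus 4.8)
\textbf{Proof proposal for Proposition~\ref{prop:error accumulation}.}

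The plan is to prove the linear error accumulation bound by a one-step coupling/triangle-inequality argument, then feed the result into the exact-chain convergence rate from Theorem~\ref{thm:chi square convergence proximal sampler}. For the first part, I would set up a coupling between the inexact chain $\Tilde\rho^X_k$ and the exact chain $\rho^X_k$ and track the total variation distance through one iteration of Algorithm~\ref{alg:fractional proximal sampler}. The key observation is that one step of the sampler is the composition of two Markov kernels: first the fractional-heat-flow kernel $x \mapsto p^{(\alpha)}(\eta;x,\cdot)$ (which is \emph{identical} in both chains), and then the R$\alpha$SO kernel $y \mapsto \rho^{X|Y}_k(\cdot|y)$ (exact) versus $y\mapsto \Tilde\rho^{X|Y}_k(\cdot|y)$ (inexact). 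Since applying a common Markov kernel is a contraction in TV (data-processing inequality), and since swapping one exact kernel for an inexact one costs at most $\sup_y \TV(\Tilde\rho^{X|Y}_k(\cdot|y),\rho^{X|Y}_k(\cdot|y))\le \varepsilon_{\TV}$ uniformly, a single step satisfies $\TV(\Tilde\rho^X_{k+1},\rho^X_{k+1})\le \TV(\Tilde\rho^X_k,\rho^X_k)+\varepsilon_{\TV}$. Iterating this recursion from $k=0$ gives $\TV(\Tilde\rho^X_k,\rho^X_k)\le \TV(\Tilde\rho^X_0,\rho^X_0)+k\,\varepsilon_{\TV}$, which is the first claim.

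For the second part, assume $\Tilde\rho^X_0=\rho^X_0$ so the first term vanishes and $\TV(\Tilde\rho^X_K,\rho^X_K)\le K\varepsilon_{\TV}$. By the triangle inequality, $\TV(\Tilde\rho^X_K,\pi^X)\le \TV(\Tilde\rho^X_K,\rho^X_K)+\TV(\rho^X_K,\pi^X)$. For the second term I would invoke Theorem~\ref{thm:chi square convergence proximal sampler} together with the Cauchy–Schwarz-type bound $2\,\TV(\rho^X_K,\pi^X)^2\le \chi^2(\rho^X_K|\pi^X)$, yielding $\TV(\rho^X_K,\pi^X)\le \sqrt{\tfrac12\exp(-K\eta(\CFPIalpha+\eta)^{-1})\,\chi^2(\Tilde\rho^X_0|\pi^X)}$. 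Choosing $K\ge K_0 := (1+\CFPIalpha\eta^{-1})\log(\chi^2(\Tilde\rho^X_0|\pi^X)/\varepsilon^2)$ makes the exponent small enough that this term is at most $\varepsilon/2$: indeed $K_0 \eta(\CFPIalpha+\eta)^{-1} = \log(\chi^2(\Tilde\rho^X_0|\pi^X)/\varepsilon^2)$, so $\exp(-K_0\eta(\CFPIalpha+\eta)^{-1})\chi^2(\Tilde\rho^X_0|\pi^X) = \varepsilon^2$, and the square root of half of that is below $\varepsilon/2$ (in fact $\varepsilon/\sqrt2$, so one absorbs the slack). Then requiring $\varepsilon_{\TV}\le \varepsilon/(2K)$ controls the first term by $\varepsilon/2$, and the two halves sum to $\varepsilon$.

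The only mildly delicate point is ensuring the data-processing/contraction step is stated at the right level of generality: the inexact conditional kernel $\Tilde\rho^{X|Y}_k$ is allowed to depend on $k$, but since the bound $\varepsilon_{\TV}$ is uniform over $y$ and $k$, the per-step increment is uniformly $\varepsilon_{\TV}$ and the telescoping goes through unchanged; I would make this explicit when writing the recursion. Everything else is a routine application of the triangle inequality for TV, the contraction property of Markov kernels, and the already-established $\chi^2$ convergence rate, so I do not anticipate a substantive obstacle — the content is entirely in the clean decomposition of one proximal step into "common heat-flow kernel $\circ$ (exact vs.\ inexact oracle)."
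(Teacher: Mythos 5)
Your proposal follows essentially the same route as the paper's proof: the one-step recursion is obtained by exactly the same decomposition (triangle inequality splitting the swap of the conditional R$\alpha$SO kernel from the change of $Y$-marginal, with the data-processing inequality handling the common heat-flow kernel), and the second part combines the telescoped bound with $\TV\le\sqrt{\chi^2/2}$ and Theorem~\ref{thm:chi square convergence proximal sampler} just as the paper does. The only quibble is the constant you flag yourself: with $K_0$ as stated you get $\varepsilon/\sqrt{2}$ rather than $\varepsilon/2$ for the bias term, which the paper's own proof resolves by taking $\ln(2\chi^2(\Tilde{\rho}^X_0|\pi^X)/\varepsilon^2)$ inside $K_0$ — a cosmetic adjustment, not a gap.
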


\vspace{-.1in}
\subsection{A practical implementation of R$\alpha$SO}\label{sec:application}
\vspace{-.1in}

In the sequel,
we introduce a practical implementation of R$\alpha$SO when $\alpha=1$. For this, we consider the case when the target density $\pi^X\propto e^{-V}$ satisfies the $1$-FPI with parameter $\CFPIone$. 
A more thorough implementation of R$\alpha$SO for other values of $\alpha$ will be investigated in future work. 
\begin{assumption}\label{ass:smoothness V} There exist constants $\beta,L>0$ such that for any minimizer $x^*\in \argmin_{y\in\mb{R}^d} V(y)$ and for all $x\in \mb{R}^d$, $V$ satisfies $    V(x)-V(x^*)\le L | x-x^* |^\beta.$   
\end{assumption}
Algorithm~\ref{alg:1-fractional proximal sampler} provides an exact implementation of R$\alpha$SO for $\alpha=1$ via rejection sampling. Inputs to this algorithm are the intermediate points $y_k$ in the stable proximal sampler (Algorithm~\ref{alg:fractional proximal sampler}). Note that Algorithm~\ref{alg:1-fractional proximal sampler} requires a global minimizer of $V$, which is always assumed to exist, which guarantees that the acceptance probability is non-trivial. It generates proposals with density $p^{(1)}(\eta;\cdot, y)$ and utilizes that $p^{(1)}$ is a Cauchy density and Cauchy random vectors can be generated via ratios between a Gaussian random vector and square-root of a $\chi^2$ random variable. Finally, the accept-reject step ensures that the output $x$ has density $\pi^{X|Y}(\cdot|y)\propto e^{-V} p^{(1)}(\eta;\cdot,y)$. This makes Algorithm~\ref{alg:1-fractional proximal sampler} a zeroth-order algorithm requiring only access to function evaluations of $V$. 
Under Assumption \ref{ass:smoothness V}, by choosing a small step-size, we can control the expected number of rejections in Algorithm \ref{alg:1-fractional proximal sampler}. We now state the iteration complexity of our stable proximal sampler with this R$\alpha$SO
implementation in the following result, whose proof is provided in Appendix \ref{appen:RGO}. 
 
\begin{algorithm}[t]
    \caption{R$\alpha$SO Implementation for $\alpha=1$ via Rejection Sampling}\label{alg:1-fractional proximal sampler}
          {\bf Input: } $V$, $x^*\in \arg\min V$, $\eta>0$, $y\in\mb{R}^d$.
            
            \FOR{\textbf{while}\ TRUE}\COMMENT{\small\texttt{Rejection sampling} }
            
             \hspace{.2in}Generate $(Z_1,Z_2,u)\sim \mc{N}(0,I_d)\otimes \mc{N}(0,1)\otimes U[0, 1]$
             
            \hspace{.2in}$x\gets y+\eta {Z_1}/{|Z_2|}$\COMMENT{\small\texttt{Cauchy random vector} }

            \hspace{.2in}\textbf{return} $x$\ \textbf{if}\ $u \leq \exp(-V(x)+V(x^*))$\COMMENT{\small\texttt{Accept-reject step} }
            
\end{algorithm}

\begin{corollary}\label{cor:oracle complexity FPI} Assume V satisfies Assumption \ref{ass:smoothness V}. If we choose the step-size $\eta=\Theta(d^{-\frac{1}{2}}L^{-\frac{1}{\beta}})$, then Algorithm \ref{alg:1-fractional proximal sampler} implements the R$\alpha$SO with $\alpha=1$, with the expected number of zeroth-order calls to $V$ of order $\mb{E}[\exp(L|y_k|^\beta)]$. 
Further assume $\pi^X$ satisfies $1$-FPI with parameter $\CFPIone$. Suppose we run Algorithm \ref{alg:fractional proximal sampler} with R$\alpha$SO implemented for  with $\alpha=1$ by Algorithm \ref{alg:1-fractional proximal sampler}. Then, to return a sample which is $\varepsilon$-close in $\chi^2$-divergence to the target, the expected number of iterations required by Algorithm \ref{alg:fractional proximal sampler} is $$\mathcal{O}\big(\CFPIone d^{\frac{1}{2}}L^{\frac{1}{\beta}}\log ( \chi^2(\rho_0^X|\pi^X)/\varepsilon)\big).$$
\end{corollary}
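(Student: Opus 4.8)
The plan is to prove the two assertions in turn: (i) Algorithm~\ref{alg:1-fractional proximal sampler} is an \emph{exact} implementation of the R$\alpha$SO at $\alpha=1$, with the claimed bound on the expected number of evaluations of $V$; and (ii) feeding this exact oracle into Algorithm~\ref{alg:fractional proximal sampler} and invoking Theorem~\ref{thm:chi square convergence proximal sampler} yields the stated iteration complexity.

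For (i), I would first check correctness. The proposal $x = y + \eta Z_1/|Z_2|$ with $Z_1\sim\mc N(0,I_d)$ and $Z_2\sim\mc N(0,1)$ independent is, by the standard representation of the multivariate $t$-distribution with one degree of freedom, distributed with density proportional to $(\eta^2 + |x-y|^2)^{-(d+1)/2}$, which is precisely the transition density $p^{(1)}(\eta;\cdot,y)$ of the isotropic $1$-stable process (recall $p^{(1)}_t(u)\propto(|u|^2+t^2)^{-(d+1)/2}$). Since $V(x)\ge V(x^*)$ everywhere, the acceptance probability $\exp(-V(x)+V(x^*))$ lies in $(0,1]$, and the envelope bound $e^{-V(x)}p^{(1)}(\eta;x,y)\le e^{-V(x^*)}p^{(1)}(\eta;x,y)$ shows, by the standard rejection-sampling argument, that an accepted sample has density proportional to $e^{-V(x)}p^{(1)}(\eta;x,y)$, i.e.\ it is exactly $\pi^{X|Y}(\cdot|y)$. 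Thus correctness holds for \emph{any} $\eta>0$, and each proposal costs one evaluation of $V$.

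The quantitative part of (i) is the heart of the argument. Since the proposals are i.i.d.\ and each is accepted with marginal probability $p_{\mathrm{acc}}(y)=\mb E_{X\sim p^{(1)}(\eta;\cdot,y)}\!\big[e^{-(V(X)-V(x^*))}\big]$, the expected number of evaluations per oracle call is $1/p_{\mathrm{acc}}(y)$. By Assumption~\ref{ass:smoothness V}, $p_{\mathrm{acc}}(y)\ge \mb E\big[e^{-L|X-x^*|^\beta}\big]$ with $X=y+\eta Z_1/|Z_2|$. Because the proposal is itself heavy-tailed, I cannot use sub-Gaussian concentration of $|X-y|$; instead I would restrict the expectation to the event $\mc G=\{|Z_1|\le 2\sqrt d\}\cap\{|Z_2|\ge 1/2\}$, whose probability is bounded below by an absolute constant ($\mb P(|Z_1|^2\le 4d)\ge 3/4$ by Markov on $\chi^2_d$, and $\mb P(|Z_2|\ge 1/2)$ is an absolute constant). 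On $\mc G$ one has $\eta|Z_1|/|Z_2|\le 4\eta\sqrt d$, which is $\Theta(L^{-1/\beta})$ precisely because $\eta=\Theta(d^{-1/2}L^{-1/\beta})$, so by $(a+b)^\beta\le \max(1,2^{\beta-1})(a^\beta+b^\beta)$ we get $L|X-x^*|^\beta\le \max(1,2^{\beta-1})\,L|y-x^*|^\beta+\Theta(1)$ on $\mc G$. Hence $p_{\mathrm{acc}}(y)\gtrsim e^{-\Theta(L|y-x^*|^\beta)}$, so one oracle call at $y=y_k$ uses $\mc O\!\big(e^{\Theta(L|y_k-x^*|^\beta)}\big)$ evaluations in expectation; taking the outer expectation over $y_k$ (and absorbing the $\beta$-dependent constants and the shift by $x^*$, e.g.\ by centering $V$ so that $x^*=0$, which is all routine) gives the claimed $\mc O\big(\mb E[e^{L|y_k|^\beta}]\big)$.

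For (ii), since the oracle is implemented \emph{exactly}, Theorem~\ref{thm:chi square convergence proximal sampler} applies directly with $\alpha=1$: after $N$ iterations, $\chi^2(\rho^X_N|\pi^X)\le \exp\!\big(-N\eta(\CFPIone+\eta)^{-1}\big)\chi^2(\rho^X_0|\pi^X)$. Setting the right-hand side to $\varepsilon$ and solving gives $N\ge (1+\CFPIone\eta^{-1})\log\big(\chi^2(\rho^X_0|\pi^X)/\varepsilon\big)$, and plugging in $\eta=\Theta(d^{-1/2}L^{-1/\beta})$ and dropping the additive $1$ yields $N=\mc O\big(\CFPIone d^{1/2}L^{1/\beta}\log(\chi^2(\rho^X_0|\pi^X)/\varepsilon)\big)$, as claimed. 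The main obstacle is the acceptance-probability lower bound: one must control the heavy-tailed ratio $\eta|Z_1|/|Z_2|$, and the truncation to $\mc G$ together with the scaling $\eta=\Theta(d^{-1/2}L^{-1/\beta})$ is exactly what keeps $\eta|Z_1|/|Z_2|$ of order $L^{-1/\beta}$ with constant probability — this is what both forces the step-size choice and produces the $d^{1/2}L^{1/\beta}$ factor in the final complexity.
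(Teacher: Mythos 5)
Your proposal is correct and follows the same overall architecture as the paper's proof: exactness of the rejection sampler for $\pi^{X|Y}(\cdot|y)\propto e^{-V}p^{(1)}(\eta;\cdot,y)$, a bound on the expected number of proposals $N(y_k)=1/p_{\mathrm{acc}}(y_k)$, and then a direct application of Theorem~\ref{thm:chi square convergence proximal sampler} with $\alpha=1$ to get $K\ge(1+\CFPIone\eta^{-1})\log(\chi^2(\rho_0^X|\pi^X)/\varepsilon)$; part (ii) is identical to the paper. Where you genuinely diverge is the acceptance-probability estimate. The paper applies Jensen's inequality to $\log N(y_k)=-\log\mb{E}\big[e^{-(V(X)-V(x^*))}\big]\le L\,\mb{E}\big[|X|^\beta\big]$ with $X\sim p^{(1)}(\eta;\cdot,y_k)$, then uses $|x+y_k|^\beta\le|x|^\beta+|y_k|^\beta$ and the closed-form fractional moment of the Cauchy proposal, $\eta^\beta m^{(1)}_\beta=\Theta(\eta^\beta d^{\beta/2})=\Theta(L^{-1})$ under the stated step size, to conclude $N(y_k)\le e^{L|y_k|^\beta+\Theta(1)}$. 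You instead lower-bound $p_{\mathrm{acc}}(y)$ by truncating to the event $\{|Z_1|\le 2\sqrt d,\ |Z_2|\ge 1/2\}$. Both are valid; the paper's route is a one-line computation but leans on the explicit formula $m_\beta^{(1)}=\Theta(d^{\beta/2})$ and implicitly on $\beta<1$ (both for that moment to be finite and for the subadditivity of $r\mapsto r^\beta$), whereas your truncation avoids taking moments of the heavy-tailed proposal altogether and so is somewhat more robust. The only caveat in your version is the factor $\max(1,2^{\beta-1})$ multiplying $L|y-x^*|^\beta$ in the exponent: for $\beta>1$ this gives $\mb{E}[e^{cL|y_k|^\beta}]$ with $c>1$ rather than the stated $\mb{E}[e^{L|y_k|^\beta}]$, but since every application in the paper (and the paper's own proof) has $\beta\le 1$, where your constant is $1$, the two bounds coincide and this is immaterial.
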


Note that the above result provides a high-accuracy guarantee for the implementable version of the stable proximal sampler (Algorithm~\ref{alg:1-fractional proximal sampler}) for a class of heavy-tailed targets, overcoming the fundamental barrier established in Theorem~\ref{thm:proximal_cauchy_lowerbound} for the Gaussian proximal sampler (i.e., Algorithm~\ref{alg:proximal sampler}).

\begin{remark}\label{rem: no minimizer} (1) Finding a global minimizer of the potential $V$ can be hard, which could be avoided if a lower bound on the potential $V$ is available; see Appendix \ref{appen:RGO}. (2) A trivial bound for $\mb{E}[\exp(L|y_k|^\beta)]$ is $\exp(LM)$ for $M=\mb{E}_{\pi^X}[|X|^\beta]+\chi^2(\rho_0^X|\pi^X)\mb{E}_{\pi^X}[|X|^{2\beta}]^{\frac{1}{2}}$. 
Since our main focus is high vs low accuracy samplers, deriving a sharper bound is beyond the scope of the current paper.
\end{remark}

\vspace{-.05in}
\subsection{Illustrative examples}
\vspace{-.05in}

To illustrate our results, we now apply the proximal algorithms to sample from Cauchy densities and discuss the complexity of both the ideal sampler (Algorithm~\ref{alg:fractional proximal sampler}) in which we can choose any $\alpha\in (0,2)$ and the implementable version with $\alpha=1$ (Algorithm~\ref{alg:1-fractional proximal sampler}). 
For the ideal sampler, we can choose $\alpha\le \nu$ for any degrees of freedom $\nu>0$, and apply Theorem~\ref{thm:chi square convergence proximal sampler} since $\pi_\nu$ satisfies a $\alpha$-FPI~\citep{wang2015functional}. 
\begin{corollary}\label{cor:ideal case} For any $\nu>0$, consider the generalized Cauchy target $\pi_\nu\propto \exp(-V_\nu)$ with $V_\nu$ defined in \eqref{eq:tpotential}. For the stable proximal sampler with parameter $\alpha\in (0,2)$ and $\alpha\le \nu$ (i.e., Algorithm~\ref{alg:fractional proximal sampler}), suppose we set the step-size $\eta\in(0,1)$ and draw the initial sample from the standard Gaussian density. Then, the number of iterations required by Algorithm~\ref{alg:fractional proximal sampler} to produce an $\varepsilon$-accurate sample in $\chi^2$-divergence is $\mc{O}(\CFPIalpha \eta^{-1} \log ({d}/{\varepsilon}) )$, where $\CFPIalpha$ is the $\alpha$-FPI parameter of $\pi_\nu$.
\end{corollary}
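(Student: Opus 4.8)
The plan is to instantiate Theorem~\ref{thm:chi square convergence proximal sampler} for the generalized Cauchy target and then bound the two remaining quantities: the $\alpha$-FPI constant $\CFPIalpha$ of $\pi_\nu$, and the $\chi^2$-divergence $\chi^2(\rho_0^X|\pi^X)$ of the standard Gaussian initialization against $\pi_\nu$. First I would recall from Remark~\ref{tempremark} (i.e.\ \citep[Theorem~1.1]{wang2015functional}) that $V_\nu$ satisfies the first two conditions of Assumption~\ref{assump:V_growth_bound_prox} with $\nu_1=\nu_2=\nu$, hence $\pi_\nu$ satisfies $\vartheta$-FPI for every $\vartheta<\nu$, and in particular the $\alpha$-FPI for the chosen $\alpha\le\nu$ with some finite constant $\CFPIalpha$ that depends only on $\alpha,\nu$ (and at most polynomially, in fact not at all in the leading order, on $d$ — this dimension dependence is exactly what gets absorbed into the statement's $\CFPIalpha$ factor, so I do not need to make it explicit).

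Next I would verify the $\chi^2$ bound for the initialization. With $\rho_0^X = \mathcal{N}(0,I_d)$ and $\pi_\nu(x)\propto (1+|x|^2)^{-(d+\nu)/2}$, the ratio $\rho_0^X/\pi_\nu$ is controlled because the Gaussian has lighter tails; concretely $\chi^2(\rho_0^X|\pi_\nu)+1 = \int (\rho_0^X)^2/\pi_\nu = Z_\nu \int (2\pi)^{-d}e^{-|x|^2}(1+|x|^2)^{(d+\nu)/2}\dee x$ where $Z_\nu$ is the normalizing constant of $\pi_\nu$. This integral is finite and, after the standard polar-coordinate / Gamma-function estimates, grows at most like $e^{\mathcal{O}(d)}$ (a crude bound suffices: split $|x|^2\le 1$ and $|x|^2>1$, use $(1+|x|^2)^{(d+\nu)/2}\le 2^{(d+\nu)/2}|x|^{d+\nu}$ on the latter and $e^{-|x|^2}|x|^{d+\nu}$ integrates to a ratio of Gamma functions). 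Hence $\log\chi^2(\rho_0^X|\pi_\nu) = \mathcal{O}(d)$, so $\log\big(\chi^2(\rho_0^X|\pi_\nu)/\varepsilon^2\big)=\mathcal{O}(\log(d/\varepsilon))$ up to constants (absorbing the linear-in-$d$ term into $\log d$ is the mild abuse already signalled by writing $\log(d/\varepsilon)$ rather than $d+\log(1/\varepsilon)$; alternatively one states the bound as $\mathcal{O}(\CFPIalpha\eta^{-1}(d+\log(1/\varepsilon)))$ and notes this is $\mathcal{O}(\CFPIalpha\eta^{-1}\log(d/\varepsilon))$ when, say, $\varepsilon$ is at most polynomially small — I would pick whichever convention the rest of the paper uses).

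Finally I would combine: by Theorem~\ref{thm:chi square convergence proximal sampler}, $\chi^2(\rho_k^X|\pi_\nu)\le \exp(-k\eta(\CFPIalpha+\eta)^{-1})\chi^2(\rho_0^X|\pi_\nu)$, and since $\eta\in(0,1)$ we have $(\CFPIalpha+\eta)^{-1}\ge (\CFPIalpha+1)^{-1}\gtrsim \CFPIalpha^{-1}$ (for $\CFPIalpha$ bounded below by an absolute constant, which holds since FPI constants are at least some dimension-free constant), so the contraction factor per step is at least $\exp(-k\eta/(2\CFPIalpha))$ say. Setting this times $\chi^2(\rho_0^X|\pi_\nu)$ to be $\le\varepsilon$ and solving for $k$ gives $k = \mathcal{O}\big(\CFPIalpha\eta^{-1}\log(\chi^2(\rho_0^X|\pi_\nu)/\varepsilon)\big) = \mathcal{O}\big(\CFPIalpha\eta^{-1}\log(d/\varepsilon)\big)$, which is the claim. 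The only mildly delicate point — the ``main obstacle'' such as it is — is being careful about where the dimension dependence lives: it enters both through $\CFPIalpha$ (kept symbolic) and through $\log\chi^2(\rho_0^X|\pi_\nu)$, and one must make sure the latter is genuinely $\mathcal{O}(\log d)$-or-absorbed rather than an unbounded factor; the Gaussian-vs-Cauchy tail comparison above handles this cleanly. Everything else is a direct substitution into the already-proved theorem.
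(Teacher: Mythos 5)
Your overall architecture matches the paper's: cite \cite{wang2015functional} for the $\alpha$-FPI of $\pi_\nu$, plug into Theorem~\ref{thm:chi square convergence proximal sampler}, and bound $\chi^2(\rho_0^X|\pi_\nu)$ for the Gaussian initialization. However, your treatment of the initialization term has a genuine quantitative gap. Your crude estimate (splitting at $|x|=1$ and using $(1+|x|^2)^{(d+\nu)/2}\le 2^{(d+\nu)/2}|x|^{d+\nu}$) really does only give $\chi^2(\rho_0^X|\pi_\nu)\le e^{\mathcal{O}(d)}$ --- the factor $2^{(d+\nu)/2}$ you introduce is not cancelled by the normalizing constants --- and $e^{\mathcal{O}(d)}$ is \emph{not} enough: $\log\big(e^{\mathcal{O}(d)}/\varepsilon\big)=\mathcal{O}(d+\log(1/\varepsilon))$, which is not $\mathcal{O}(\log(d/\varepsilon))$ in any regime where $\varepsilon$ is only polynomially small in $d$. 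The statement "absorbing the linear-in-$d$ term into $\log d$ is the mild abuse already signalled by writing $\log(d/\varepsilon)$" is not an abuse the paper makes. The paper instead invokes \cite[Corollary 22]{mousavi23towards}, which gives the pointwise bound $R_\infty(\rho_0^X|\pi_\nu)\le \ln(2^{\nu/2}\Gamma(\nu/2))+\ln(\tfrac{d+\nu}{2e})$, hence $\chi^2(\rho_0^X|\pi_\nu)\le \sup(\rho_0^X/\pi_\nu)=\Theta(d)$; the exponential prefactors from the Gaussian normalization, the Cauchy normalization, and the mode of the ratio genuinely cancel, and this polynomial-in-$d$ bound is what legitimizes the $\log(d/\varepsilon)$ in the statement. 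To repair your argument you must either reproduce that sharper computation (e.g.\ maximize $-|x|^2/2+\tfrac{d+\nu}{2}\ln(1+|x|^2)$ plus the log-normalizers and apply Stirling) or cite the $R_\infty$ bound directly.

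Two smaller points. First, you derive the FPI from Remark~\ref{tempremark} (i.e.\ Theorem~1.1 of \cite{wang2015functional}), which yields $\vartheta$-FPI only for $\vartheta<\nu$, whereas the corollary permits the boundary case $\alpha=\nu$; the paper cites \cite[Corollary 1.2]{wang2015functional}, which covers $\alpha\le\min(2,\nu)$. Second, your passage from $(\CFPIalpha+\eta)^{-1}$ to $\CFPIalpha^{-1}$ implicitly assumes $\CFPIalpha\gtrsim 1$; the paper makes the same implicit simplification via "$\eta\in(0,1)$", so this is not a point of divergence, but it is worth stating the assumption rather than asserting that FPI constants are universally bounded below.
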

For the implementable sampler, since the parameter $\alpha$ is fixed to be $1$, 
whether a suitable FPI is satisfied or not depends on the degrees of freedom $\nu$. Specifically, when $\nu \ge 1$, $1$-FPI is satisfied and Corollary \ref{cor:implementable case} applies. When $\nu\in (0,1)$, on the other hand, $1$-FPI is not satisfied. To tackle this issue, we prove convergence guarantees for the proximal sampler under a weak fractional Poincar\'{e} inequality;
the next corollary, proved in Appendix~\ref{append:weak FPI}, summarizes these results.
\begin{corollary}\label{cor:implementable case} For the Cauchy target $\pi_\nu\propto \exp(-V_\nu)$ where $V_\nu$ is defined in \eqref{eq:tpotential}, we consider
Algorithm~\ref{alg:fractional proximal sampler} with $\alpha = 1$, a standard Gaussian initialization,
and R$\alpha$SO implemented by Algorithm~\ref{alg:1-fractional proximal sampler}.  
    \begin{itemize}[noitemsep,leftmargin=20pt]
    \item [(1)] When $\nu\ge 1$, if we set the step-size $\eta=\Theta\big(d^{-\frac{1}{2}}(d+\nu)^{-4}\big)$, the expected number iterations required by Algorithm~\ref{alg:fractional proximal sampler} to output a sample which is $\varepsilon$-close in $\chi^2$-divergence to the target is of order $\mc{O}\big(\CFPIone d^{\frac{1}{2}}(d+\nu)^{4} \log ({d}/{\varepsilon}) \big)$, where $\CFPIone$ is the $1$-FPI parameter of $\pi_\nu$.
     \item [(2)] When $\nu\in(0,1)$, if we set the step-size $\eta=\Theta\big(d^{-\frac{1}{2}}(d+\nu)^{-\frac{4}{\nu}}\big)$, the expected number of iterations required by Algorithm~\ref{alg:fractional proximal sampler}, to output a sample which is $\varepsilon$-close in $\chi^2$-divergence to the target is of order $\Tilde{\mathcal{O}}\big( \max\big\{ c^{\frac{1}{\nu}}d^{\frac{1}{2\nu}+\frac{4}{\nu^2}},  c d^{\frac{1}{2}+\frac{4}{\nu}} \varepsilon^{-\frac{1}{\nu}+1}\big\}  \big)$, where $c$ is the positive constant given in \eqref{eq:1FPI parameter}. Here, $\Tilde{\mathcal{O}}$ hides the polylog factors on $d$ and $1/\varepsilon$.
\end{itemize}
\end{corollary}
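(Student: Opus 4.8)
The plan is to treat the two regimes separately, in each case following the template behind Corollary~\ref{cor:oracle complexity FPI}: establish the relevant (weak) fractional Poincaré inequality for $\pi_\nu$, verify Assumption~\ref{ass:smoothness V} to fix the admissible step size and rejection cost, bound the initialization error, and assemble. All of the genuinely new work is in the $\nu\in(0,1)$ case, where $1$-FPI is unavailable.

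\textbf{Part (1) ($\nu\ge 1$).} Since $V_\nu$ satisfies the first two conditions of Assumption~\ref{assump:V_growth_bound_prox} with $\nu_1=\nu_2=\nu$, Remark~\ref{tempremark} (via \citet[Theorem~1.1]{wang2015functional}) gives that $\pi_\nu$ satisfies $\vartheta$-FPI for every $\vartheta<\nu$; for $\nu>1$ this includes $\vartheta=1$, and $\nu=1$ follows either directly from \citet{wang2015functional} or by monotonicity of the FPI constant in $\vartheta$. To invoke Corollary~\ref{cor:oracle complexity FPI} I would then verify Assumption~\ref{ass:smoothness V} with $x^\star=0$: using the elementary bound $\ln(1+t)\le C_\beta\,t^{\beta/2}$ on $t\ge 0$, valid for any fixed $\beta\in(0,2]$ with an absolute constant $C_\beta$, one gets $V_\nu(x)-V_\nu(0)\le \tfrac{C_\beta}{2}(d+\nu)|x|^\beta$, so Assumption~\ref{ass:smoothness V} holds with a fixed $\beta$ and $L=\Theta(d+\nu)$; choosing $\beta=\tfrac14$ makes $L^{1/\beta}=\Theta((d+\nu)^4)$, matching the prescribed $\eta=\Theta(d^{-1/2}(d+\nu)^{-4})$. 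Finally, the $\chi^2$ divergence of a standard Gaussian from $\pi_\nu$ is $\mathrm{poly}(d)$ (a direct Gaussian integral against $(1+|x|^2)^{-(d+\nu)/2}$, using $\int_{\mb{R}^d}(1+|x|^2)^{-(d+\nu)/2}\dee x=\pi^{d/2}\Gamma(\nu/2)/\Gamma((d+\nu)/2)$ and a Laplace estimate), so $\log(\chi^2(\rho_0^X|\pi^X)/\varepsilon)=\mc{O}(\log(d/\varepsilon))$; substituting into Corollary~\ref{cor:oracle complexity FPI} yields the stated $\mc{O}(\CFPIone d^{1/2}(d+\nu)^4\log(d/\varepsilon))$.

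\textbf{Part (2) ($\nu\in(0,1)$).} First I would show $\pi_\nu$ satisfies a \emph{weak} $1$-FPI, $\Var_{\pi_\nu}(\phi)\le \beta_{\mathrm{wFPI}(1)}(s)\,\mc{E}^{(1)}_{\pi_\nu}(\phi)+s\,\mathrm{osc}(\phi)^2$ for all admissible $s$, with $\beta_{\mathrm{wFPI}(1)}(s)\asymp c\,s^{-(1-\nu)/\nu}$ as $s\to 0$ and $c$ the constant in \eqref{eq:1FPI parameter}; this is the fractional analogue of the classical passage from a $1$-FPI on large balls together with the tail bound $\pi_\nu(|x|>R)\asymp R^{-\nu}$ (or a Lyapunov-type argument) to a weak functional inequality, following \citet{wang2015functional}. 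The second — and main — step is a convergence guarantee for Algorithm~\ref{alg:fractional proximal sampler} with $\alpha=1$ under this weak $1$-FPI, replacing Theorem~\ref{thm:chi square convergence proximal sampler}. Reusing the structure of the proof of Theorem~\ref{thm:chi square convergence proximal sampler} — namely $\chi^2(\rho^X_{k+1}|\pi^X)\le\chi^2(\rho^Y_k|\pi^Y)$ by data processing together with a one-step contraction along the fractional heat flow — the weak inequality yields only a per-step recursion of the schematic form $\chi^2(\rho^X_{k+1}|\pi^X)\le \chi^2(\rho^X_k|\pi^X)-\tfrac{\eta}{\beta_{\mathrm{wFPI}(1)}(s)}\big(\chi^2(\rho^X_k|\pi^X)-s\,M_k\big)$ for every admissible $s$, where $M_k$ is an oscillation-type functional of $\rho^X_k/\pi^X$. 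Optimizing over $s$ turns this into a discrete analogue of the weak-Poincaré differential inequality, whose solution has two phases: an $\varepsilon$-independent transient of length $\asymp(c\,\eta^{-1})^{1/\nu}\mathrm{poly}(d,\log)$, followed by a polynomial phase in which $\chi^2$ decays like $(k\eta/(c\,M^{(1-\nu)/\nu}))^{-\nu/(1-\nu)}$; reading off the first $K$ with $\chi^2(\rho^X_K|\pi^X)\le\varepsilon$, and bounding $M_k$ and $\chi^2(\rho^X_0|\pi^X)$ by $\mathrm{poly}(d)$ as in Part~(1), gives an iteration count of order $\max\{(c\eta^{-1})^{1/\nu},\,c\,\eta^{-1}\varepsilon^{-(1-\nu)/\nu}\}$ up to polylog factors. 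It remains to pin down $\eta$: Assumption~\ref{ass:smoothness V} holds as above, but to keep $\mb{E}[\exp(L|y_k|^\beta)]$ finite along the trajectory — $y_k$ having Cauchy-type tails with only moments of order $<\nu$ finite, cf.\ Remark~\ref{rem: no minimizer}(2) — one must take $\beta<\nu/2$; choosing $\beta=\nu/4$ gives $L=\Theta(d+\nu)$, hence $L^{1/\beta}=\Theta((d+\nu)^{4/\nu})$, $\eta=\Theta(d^{-1/2}(d+\nu)^{-4/\nu})$, and $\eta^{-1}=\Theta(d^{1/2+4/\nu})$, so that $(c\eta^{-1})^{1/\nu}\asymp c^{1/\nu}d^{1/(2\nu)+4/\nu^2}$ and $c\eta^{-1}\varepsilon^{-(1-\nu)/\nu}\asymp c\,d^{1/2+4/\nu}\varepsilon^{-1/\nu+1}$, which is the claimed $\tilde{\mc{O}}$ bound.

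\textbf{Main obstacle.} The crux is the weak-$1$-FPI convergence analysis of the stable proximal sampler. The oscillation term $M_k$ is not summable and is a priori infinite for heavy-tailed density ratios, so one must show that a single R$\alpha$SO step already regularizes the tails of $\rho^X_k/\pi^X$ — the tilt by the Cauchy kernel $p^{(1)}(\eta;\cdot,y)$ renders the ratio bounded — or, alternatively, truncate and exploit the finite low-order moments of $\pi_\nu$, so that $M_k=\mathrm{poly}(d)$ uniformly in $k$; only then does optimizing $s$ in the recursion produce a genuine polynomial decay rate and, after carefully tracking the transient phase, the stated two-regime complexity. Everything else — the (weak) FPI for $\pi_\nu$, the verification of Assumption~\ref{ass:smoothness V}, and the Gaussian initialization bound — is routine given the results already established in Section~\ref{sec:fractionalproximal}.
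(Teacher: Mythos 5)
Part (1) of your proposal is correct and follows the paper's route exactly: $1$-FPI from \citet[Corollary 1.2]{wang2015functional}, Assumption~\ref{ass:smoothness V} with $\beta=1/4$ and $L=\Theta(d+\nu)$, and substitution into Corollary~\ref{cor:oracle complexity FPI} with a $\mathrm{poly}(d)$ bound on $\chi^2(\rho_0^X|\pi_\nu)$.

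For Part (2) your skeleton (weak $1$-FPI with $\beta_{\mathrm{wFPI}(1)}(r)\asymp c\,r^{-(1-\nu)/\nu}$, plus a modified convergence theorem replacing Theorem~\ref{thm:chi square convergence proximal sampler}) is the paper's route, and your final exponents match. But the step you flag as the ``main obstacle'' --- a uniform-in-$k$ bound on the oscillation term $M_k$ --- is genuinely left open in your write-up, and the two workarounds you sketch (arguing that one R$\alpha$SO step renders $\rho^X_k/\pi^X$ bounded, or truncating against low-order moments) are not what is needed and the first is dubious: convolving with $p^{(1)}(\eta;\cdot,y)$ does not by itself bound the density ratio if it is unbounded at initialization. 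The paper's resolution (Theorem~\ref{thm:chi square convergence weak PI} and Proposition~\ref{prop:step 1 decay under weak PI}) is both simpler and different: the oscillation term along the flow is bounded as $\lv \rho^X_t/\pi^X_t-1\rv_\infty^2\le 4\exp\big(2R_\infty(\rho^X_t|\pi^X_t)\big)$, and the R\'enyi-$\infty$ divergence is \emph{non-increasing} under both steps of the sampler by the data processing inequality, so $M_k\le 4\exp(2R_\infty(\rho_0^X|\pi^X))$ for all $k$. This is why the theorem carries the hypothesis $R_\infty(\rho_0^X|\pi^X)<\infty$ --- a condition your proposal never invokes --- and why the standard Gaussian initialization matters: by \citet[Corollary 22]{mousavi23towards}, $R_\infty(\mc{N}(0,I_d)\,|\,\pi_\nu)=\mc{O}(\log d)$, which makes the oscillation term $\mathrm{poly}(d)$. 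Also note the paper does not run your per-step optimization of $s$; it fixes a single $r$ throughout, obtains a geometric-plus-additive recursion, and only then chooses $r=\Theta\big(e^{-2\nu R_\infty}c^\nu\varepsilon^\nu/((k+1)^\nu\eta^\nu)\big)$ a posteriori, which already yields the two-term bound $\max\{c^{1/\nu}\eta^{-1/\nu},\,c\,\eta^{-1}\varepsilon^{-(1-\nu)/\nu}\}$ without any two-phase analysis. With the $R_\infty$ data-processing argument supplied in place of your unresolved obstacle, the rest of your Part (2) (the choice $\beta=\nu/4$, $L=\Theta(d+\nu)/\nu$, $\eta=\Theta(d^{-1/2}(d+\nu)^{-4/\nu})$) goes through as in the paper.
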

The stable proximal sampler (Algorithm~\ref{alg:fractional proximal sampler}) is a high accuracy sampler for the class of generalized Cauchy targets, as long as $\alpha \leq \nu$, meaning that it achieves log($1/\varepsilon$) iteration complexity. The improvement from poly($1/\varepsilon$) to log($1/\varepsilon$) separates the stable proximal sampler and the Gaussian proximal sampler in the task of heavy-tailed sampling. When we use the rejection-sampling implementation with parameter $\alpha=1$ (Algorithm~\ref{alg:1-fractional proximal sampler}), iteration complexity goes through a phase transition as the tails get heavier. When the generalized Cauchy density has a finite mean ($\nu>1$), we achieve a high-accuracy sampler with log($1/\varepsilon$) iteration complexity. However, without a finite mean (i.e., $\nu\in (0,1)$), the algorithm becomes a low-accuracy sampler with poly($1/\varepsilon$) complexity. Even in this low-accuracy regime, the implementable stable proximal sampler outperforms the Gaussian one, as originally highlighted in Table~\ref{tab:comparison}. Last, we claim that the poly($1/\varepsilon$) complexity of Algorithms~\ref{alg:fractional proximal sampler} and \ref{alg:1-fractional proximal sampler} is not due to a loose analysis, as we show $\poly(1/\varepsilon)$ lower bounds in the following section.
\vspace{-0.05in}



\subsection{Lower bounds for the stable proximal sampler}\label{sec:fractioanl lower bound}
\vspace{-0.05in}

We now study lower bounds on the stable proximal sampler to sample from the class of target densities satisfying Assumption~\ref{assump:V_growth_bound_prox}, which includes the generalized Cauchy target. Recall that Assumption~\ref{assump:V_growth_bound_prox} implies the FPI used in Theorem~\ref{thm:chi square convergence proximal sampler}. The result below, proved in Appendix~\ref{app:stable_proximal_lower_bound}, complements Theorem~\ref{thm:chi square convergence proximal sampler}, showing the impossibility of achieving $\log(1/\varepsilon)$ rates for a sufficiently large $\alpha$.

\begin{theorem}\label{thm:fractional_proximal_cauchy_lowerbound}
    Suppose $\pi^X \propto \exp(-V)$ with $V$ satisfying Assumption~\ref{assump:V_growth_bound_prox} and $\frac{\nu_2(d+\nu_2)}{d+\nu_1}<\alpha\le 2$ . Let $x_k$ denote the $k$\textsuperscript{th} iterate of Algorithm~\ref{alg:fractional proximal sampler} with parameter $\alpha$ and step size $\eta$, and let $\rho^X_k \coloneqq \operatorname{Law}(x_k)$. 
    \textcolor{black}{ Then for any $\tau \in \big(\frac{\nu_2(d+\nu_2)}{d+\nu_1},\alpha\big)$,
    and $g(d,\nu_1,\nu_2,\tau)={\nu_2}/\{\tau(d+\nu_1)-\nu_2(d+\nu_2)\}$,
    we have
    $$
    \TV(\pi^X,\rho^X_k) \ge C_{\nu_1,\nu_2,\alpha}  d^{\tfrac{\tau(d+\nu_1)g(d,\nu_1,\nu_2,\tau)}{2}}\big( \mb{E}[(1+|x_0|^2)^{\tfrac{\tau}{2}}]+m^{(\alpha)}_{\tau}k^{\tfrac{\tau}{2}+1}\eta^{\tfrac{\tau}{\alpha}}  \big)^{-(d+\nu_2)g(d,\nu_1,\nu_2,\tau)},
    $$
    where 
    $C_{\nu_1,\nu_2,\alpha}$ is a constant depending only on $\nu_1,\nu_2,\alpha$, and $m^{(\alpha)}_{\tau}$ is the $\tau$\textsuperscript{th} absolute moment of the $\alpha$-stable random variable with density $p_1^{(\alpha)}$ defined in \eqref{eq:transition density alpha stable}.
    }
\end{theorem}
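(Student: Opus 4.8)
The plan is to mimic the proof of Theorem~\ref{thm:proximal_cauchy_lowerbound}: bound a low-order polynomial moment of the iterates from above, observe that the heavy-tailed target has the corresponding moment equal to $+\infty$, and convert the gap into a total-variation lower bound. Fix a free parameter $\tau\in\big(\tfrac{\nu_2(d+\nu_2)}{d+\nu_1},\alpha\big)$ (the bound is asserted for every such $\tau$) and set $W(x)=(1+|x|^2)^{\tau/2}$. Two elementary facts frame the argument. First, since $\tau<\alpha$, the $\alpha$-stable density $p_1^{(\alpha)}$ has finite $\tau$-th absolute moment $m_\tau^{(\alpha)}$. Second, since $\tau>\nu_2$, the lower bound $e^{-V(x)}\ge e^{-V(0)}(1+|x|^2)^{-(d+\nu_2)/2}$ — itself a consequence of $|\nabla V(x)|\le\tfrac{(d+\nu_2)|x|}{1+|x|^2}$ in Assumption~\ref{assump:V_growth_bound_prox} — gives $\mb{E}_{\pi^X}[W]=\infty$ and, integrated over $\{|x|>r\}$, the tail estimate $\pi^X(|x|>r)\ge c_d\, r^{-\nu_2}$ for all large $r$, with $c_d>0$ an explicit $d$-dependent constant.

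\textbf{From the moment bound to the TV lower bound.} Suppose the moment step below yields $\mb{E}_{\rho_k^X}[W]\le M_k$ with $M_k\lesssim\mb{E}[(1+|x_0|^2)^{\tau/2}]+m_\tau^{(\alpha)}k^{\tau/2+1}\eta^{\tau/\alpha}$. Testing against $A_R=\{|x|>R\}$,
\[
\TV(\pi^X,\rho_k^X)\ \ge\ \pi^X(A_R)-\rho_k^X(A_R)\ \ge\ c_d\,R^{-\nu_2}-\frac{M_k}{(1+R^2)^{\tau/2}},
\]
the last term by Markov's inequality. Optimizing over $R$ — the crossover is $R^{\tau-\nu_2}\asymp M_k/c_d$, which is precisely where $\tau>\nu_2$ (sharpened to $\tau>\tfrac{\nu_2(d+\nu_2)}{d+\nu_1}$ once the $d$-dependent constants from the moment step and from $c_d$ are inserted) is needed — produces a bound of the form $\TV(\pi^X,\rho_k^X)\gtrsim d^{(\cdots)}M_k^{-(\cdots)}$; substituting $M_k$ gives the stated inequality, and $g(d,\nu_1,\nu_2,\tau)$ together with the power of $d$ fall out of this optimization.

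\textbf{The moment step.} One analyzes one iteration of Algorithm~\ref{alg:fractional proximal sampler} in its two pieces. The fractional-heat-flow step is exact and, by self-similarity of the isotropic $\alpha$-stable process, amounts to $y_k=x_k+\eta^{1/\alpha}\xi_k$ with $\xi_k\sim p_1^{(\alpha)}$ independent of $x_k$; since $\tau<\alpha$ (equivalently $m_\tau^{(\alpha)}<\infty$, equivalently $(-\Delta)^{\alpha/2}W$ bounded), standard estimates control $\mb{E}[W(y_k)\mid x_k]$ by $W(x_k)$ plus an increment of order $\eta^{\tau/\alpha}m_\tau^{(\alpha)}$. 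The R$\alpha$SO step draws $x_{k+1}$ from $\pi^{X\mid Y}(\cdot\mid y_k)\propto e^{-V(x)}p_\eta^{(\alpha)}(x-y_k)$; here one sandwiches $e^{-V(x)}$ between $e^{-V(0)}(1+|x|^2)^{-(d+\nu_2)/2}$ and $e^{-V(0)}(1+|x|^2)^{-(d+\nu_1)/2}$ via Assumption~\ref{assump:V_growth_bound_prox}, lower-bounds the normalizing constant $Z(y_k)=\int e^{-V}p^{(\alpha)}_\eta(\cdot-y_k)$ using that $p_\eta^{(\alpha)}$ keeps a constant fraction of its mass within its natural scale $\eta^{1/\alpha}$, and upper-bounds $\int W(x)e^{-V(x)}p_\eta^{(\alpha)}(x-y_k)\,\dee x$ by splitting according to whether $x$ is near $y_k$, near the origin, or at intermediate scales. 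The hypothesis $\alpha>\tfrac{\nu_2(d+\nu_2)}{d+\nu_1}$ — equivalently, the existence of $\tau<\alpha$ exceeding $\tfrac{\nu_2(d+\nu_2)}{d+\nu_1}$ — is exactly the threshold at which these tail integrals converge and the mismatch between the $(d+\nu_1)$- and $(d+\nu_2)$-type decay of $e^{-V}$ still leaves the R$\alpha$SO-step moment increase controllable; that same mismatch is what forces $(d+\nu_1)$ and $(d+\nu_2)$ (rather than just $\nu_1,\nu_2$) into $g$. Unrolling the two-substep recursion over $k$ iterations — the stable increments accumulating as $p^{(\alpha)}_{k\eta}$ by stability and the per-step R$\alpha$SO overheads compounding — yields the polynomial growth $m_\tau^{(\alpha)}k^{\tau/2+1}\eta^{\tau/\alpha}$.

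I expect the R$\alpha$SO-step moment estimate to be the main obstacle. Unlike the Gaussian case of Theorem~\ref{thm:proximal_cauchy_lowerbound}, the conditional law has genuinely heavy ($\alpha$-stable) tails and $W$ has only the low finite order $\tau<\alpha<2$, so one cannot pass to the high-order Lyapunov functions used there; the per-step increase must instead be pinned down by a careful competition between the polynomial tail of $e^{-V}$ and the $\alpha$-stable tail of $p_\eta^{(\alpha)}$, and extracting clean dependence on $k$ and $\eta$ from the recursion requires care. A secondary point is bookkeeping: every constant in the optimization step must carry its explicit $d$-dependence, since the final bound keeps a positive power of $d$.
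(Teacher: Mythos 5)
Your overall architecture matches the paper's: a Hairer-style total-variation lower bound (Lemma~\ref{lem:tv_lower_bound}) applied to a test function of polynomial growth of order $\tau\in\big(\tfrac{\nu_2(d+\nu_2)}{d+\nu_1},\alpha\big)$, a tail lower bound for $\pi^X$ from the sandwich $\tfrac{d+\nu_1}{2}\ln(1+|x|^2)\le V(x)\le\tfrac{d+\nu_2}{2}\ln(1+|x|^2)$, a moment-growth recursion along the iterates, and a final optimization over the threshold $y$ (your $R$). Your treatment of the fractional-heat-flow substep and the reduction from the moment bound to the TV bound are both essentially identical to the paper's (Lemma~\ref{lem:frac_proximal_cauchy_moment_growth} and the proof in Appendix~\ref{app:stable_proximal_lower_bound}), including the correct identification of where the $(d+\nu_1)$ versus $(d+\nu_2)$ mismatch enters $g$.

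The genuine gap is the R$\alpha$SO moment step, which you yourself flag as "the main obstacle" and for which you only sketch a direct computation (sandwiching $e^{-V}$, lower-bounding the normalizer $Z(y_k)$, splitting the integral by scales). The paper avoids all of this with a short structural argument (Lemma~\ref{lem:frac_prox_g_rec}): take the test function to be $G=\exp(\kappa V)=T(\pi^X)$ with $T(r)=r^{-\kappa}$ and $\kappa\in(0,1)$, note that $T$ is convex while $r\mapsto rT(r)$ is concave, and apply Jensen twice to the identity $\mb{E}[G(x_{k+1})\mid y_k]=\mb{E}[(\pi^X T(\pi^X))(y_k+\eta^{1/\alpha}z)\mid y_k]\,/\,\mb{E}[\pi^X(y_k+\eta^{1/\alpha}z)\mid y_k]$ to conclude $\mb{E}[G(x_{k+1})\mid y_k]\le\mb{E}[G(y_k+\eta^{1/\alpha}z)\mid y_k]$ — i.e.\ the R$\alpha$SO step costs nothing beyond a second stable convolution, and the two increments merge into $2^{1/\alpha}\eta^{1/\alpha}\bar z_k$ by self-similarity. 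This is why the final bound has a purely \emph{additive} per-step overhead of order $\eta^{\tau/\alpha}m^{(\alpha)}_\tau$ and hence the clean $k^{\tau/2+1}\eta^{\tau/\alpha}$ accumulation. Your plan, by contrast, works with $W(x)=(1+|x|^2)^{\tau/2}$, which is not of the form $T(\pi^X)$, so the Jensen trick is unavailable as stated, and the ratio $\int W e^{-V}p^{(\alpha)}_\eta(\cdot-y_k)/Z(y_k)$ must be controlled by hand; it is not established in your sketch that the resulting per-step increase is additive rather than multiplicative (a multiplicative factor $1+c\eta^{\tau/\alpha}$, say, would compound to something much larger over $k\gg 1$ steps and would destroy the stated rate). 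To close the gap you should either carry out the scale-splitting estimate and verify additivity with explicit $d$-dependence, or — much more simply — switch the test function to $G=\exp(\kappa V)$ with $\kappa=\tau/(d+\nu_2)$ and use the convexity/concavity argument above.
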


\begin{remark} 
     \textcolor{black}{The parameter $\tau$ in Theorem \ref{thm:fractional_proximal_cauchy_lowerbound} can be chosen arbitrarily close to $\alpha$. Specifically, if we assume $|X_0|\le \mc{O}(\sqrt{d})$, then with the choice of $\tau=\alpha- \big(\frac{\log(\log d)}{\log d} \wedge \frac{\log\log (\eta^{-1})}{\log(\eta^{-1})}\big)$, we have
\begin{align*}\TV(\pi^X,\rho^X_k) \ge \Tilde{\Omega}_{\nu_1,\nu_2,\alpha}\big( d^{\tfrac{\tau(d+\nu_1)g(d,\nu_1,\nu_2,\alpha)}{2}}\big(d^\alpha+ m^{(\alpha)}_{\tau}k^{\tfrac{\alpha}{2}+1}\eta \big)^{-(d+\nu_2)g(d,\nu_1,\nu_2,\alpha)} \big)   ,\end{align*}
    where $\Tilde{\Omega}$ hides $\text{polylog}(d/\eta)$ factors.}
\end{remark}

The $\tau$\textsuperscript{th} absolute moment of the $\alpha$-stable random variable depends on the choice of $\alpha$ and the dimension $d$. It is hard to find an explicit formula of $m^{(\alpha)}_{\tau}$ in general. An explicit formula is only available in some special cases, such as $\alpha=1,2$. Specializing Theorem~\ref{thm:fractional_proximal_cauchy_lowerbound} for the generalized Cauchy potential (i.e., $\nu_1=\nu_2$) we obtain the following explicit result.

\begin{corollary}\label{prop:fps_cauchy_lowerbound}
     Let $\alpha\in (0,2]$. Suppose $\pi_\nu \propto \exp(-V_\nu)$ where $V_\nu(x)$ is as in~\eqref{eq:tpotential} for some $\nu \in (0,\alpha)$. Let $(x_k)_{k\ge 0}$ be the output of Algorithm~\ref{alg:fractional proximal sampler}  with parameter $\alpha$ and step-size $\eta>0$, and $\rho^X_k \coloneqq \operatorname{Law}(x_k)$ for all $k\ge 0$. Then for any $\tau\in (\nu,\alpha)$, 
    \begin{align*}
    \TV(\rho^X_k,\pi_\nu)\ge C_{\nu,\alpha} d^{\tfrac{\nu\tau}{2(\tau-\nu)}}\big( \mb{E}[(1+|x_0|^2)^{\tfrac{\tau}{2}}]+m^{(\alpha)}_{\tau}k^{\tfrac{\tau}{2}+1}\eta^{\tfrac{\tau}{\alpha}}  \big)^{-\tfrac{\nu}{\tau-\nu}} .
    \end{align*}
    where $m^{(\alpha)}_{\tau}$ is the $\tau$\textsuperscript{th} absolute moment of the $\alpha$-stable random variable with density $p_1^{(\alpha)}$ as in \eqref{eq:transition density alpha stable}.    
\end{corollary}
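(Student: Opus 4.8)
The plan is to specialize Theorem~\ref{thm:fractional_proximal_cauchy_lowerbound} to the generalized Cauchy potential, which corresponds to setting $\nu_1 = \nu_2 = \nu$ in Assumption~\ref{assump:V_growth_bound_prox}; indeed $V_\nu$ satisfies that assumption with these parameters, as noted right after the statement of Assumption~\ref{assump:V_growth_bound_prox}. First I would check the hypothesis of the theorem translates correctly: with $\nu_1 = \nu_2 = \nu$, the condition $\frac{\nu_2(d+\nu_2)}{d+\nu_1} < \alpha$ becomes $\frac{\nu(d+\nu)}{d+\nu} = \nu < \alpha$, which is exactly the assumed range $\nu \in (0,\alpha)$. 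Likewise the admissible range for $\tau$, namely $\tau \in \big(\frac{\nu_2(d+\nu_2)}{d+\nu_1}, \alpha\big)$, collapses to $\tau \in (\nu, \alpha)$, matching the corollary's hypothesis.

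The bulk of the argument is just simplifying the two exponents in the bound. The function $g$ evaluates to
\[
g(d,\nu,\nu,\tau) = \frac{\nu}{\tau(d+\nu) - \nu(d+\nu)} = \frac{\nu}{(\tau-\nu)(d+\nu)}.
\]
Plugging this into the dimension exponent of Theorem~\ref{thm:fractional_proximal_cauchy_lowerbound} gives
\[
\frac{\tau(d+\nu)\,g(d,\nu,\nu,\tau)}{2} = \frac{\tau(d+\nu)}{2}\cdot\frac{\nu}{(\tau-\nu)(d+\nu)} = \frac{\nu\tau}{2(\tau-\nu)},
\]
which is precisely the exponent $d^{\frac{\nu\tau}{2(\tau-\nu)}}$ appearing in the corollary. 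Similarly, the exponent on the parenthesized time-dependent factor becomes
\[
(d+\nu_2)\,g(d,\nu,\nu,\tau) = (d+\nu)\cdot\frac{\nu}{(\tau-\nu)(d+\nu)} = \frac{\nu}{\tau-\nu},
\]
so the factor $\big(\mb{E}[(1+|x_0|^2)^{\tau/2}] + m^{(\alpha)}_\tau k^{\tau/2+1}\eta^{\tau/\alpha}\big)^{-(d+\nu_2)g}$ becomes the stated $\big(\cdots\big)^{-\frac{\nu}{\tau-\nu}}$. The constant $C_{\nu_1,\nu_2,\alpha}$ specializes to a constant $C_{\nu,\alpha}$ depending only on $\nu$ and $\alpha$, and $m^{(\alpha)}_\tau$ is unchanged since it depends only on $\alpha$ and $\tau$. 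This yields exactly the claimed inequality.

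There is essentially no obstacle here — the corollary is a direct substitution, and the only thing to be careful about is verifying that $\alpha=2$ is permitted (the corollary states $\alpha\in(0,2]$ whereas Algorithm~\ref{alg:fractional proximal sampler} is stated for $\alpha\in(0,2)$), but the endpoint $\alpha = 2$ corresponds to the Gaussian proximal sampler already covered by Theorem~\ref{thm:proximal_cauchy_lowerbound}, so the bound carries over by the same computation with $m^{(2)}_\tau$ the $\tau$-th absolute moment of a Gaussian. I would also remark that the moment $\mb{E}[(1+|x_0|^2)^{\tau/2}]$ is finite whenever $\tau < \alpha \le 2$ and the initialization has a finite $\tau$-th moment (e.g.\ a Gaussian start), so the bound is non-vacuous; and that, as in the remark following Theorem~\ref{thm:fractional_proximal_cauchy_lowerbound}, choosing $\tau$ close to $\alpha$ together with $|x_0| \le \mc{O}(\sqrt d)$ produces the poly$(1/\varepsilon)$ lower bound on the iteration count advertised in the discussion of Corollary~\ref{cor:implementable case}.
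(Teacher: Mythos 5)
Your proposal is correct and matches the paper's own route: the corollary is obtained exactly by setting $\nu_1=\nu_2=\nu$ in Theorem~\ref{thm:fractional_proximal_cauchy_lowerbound}, and your simplifications of $g$ and of the two exponents are the same computations the paper intends (note the theorem's hypothesis already allows $\alpha\le 2$, so no separate argument for the endpoint is needed).
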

 For the rejection sampling implementation in Algorithm~\ref{alg:1-fractional proximal sampler}, $\alpha=1$ and $m_{\tau}^{(1)}=\Theta(d^\frac{\tau}{2})$ for all $\tau<1$ (see Appendix~\ref{appen:prelimilaries}). Notice that to implement the  R$\alpha$SO in the Stable proximal sampler efficiently, we need a sufficiently small step-size $\eta$. When the target potential satisfies Assumption \ref{ass:smoothness V}, i.e. $V$ is $\beta$-H\"older continuous with parameter $L$, we require $\eta= \Theta(d^{-\frac{1}{2}}L^{-\frac{1}{\beta}})$ to ensure R$\alpha$SO can be implemented with $\mathcal{O}(1)$ queries. Therefore, if we choose $\eta= \Theta(d^{-\frac{1}{2}}L^{-\frac{1}{\beta}})$, the minimum number of iterations we need to get an $\varepsilon$-error in $\TV$ is $$\Omega_{\nu,\tau}\Big(\varepsilon^{-\tfrac{2(\tau-\nu)}{(2+\tau)\nu} } d^{\tfrac{\tau}{2+\tau}} L^{\tfrac{2\tau}{\beta(2+\tau)}} \Big).$$ For the generalized Cauchy potential with $\nu\in(0,1)$, we have $\beta=\nu/4$ and $L=(d+\nu)/\nu$, which leads to the following corollary.

\begin{corollary}\label{cor:lower bound fps}
    Suppose $\pi^X_\nu \propto \exp(-V_\nu)$ is the generalized Cauchy density with $\nu\in (0,1)$. Let $x_k$ denote the $k$-th iterate of the stable proximal sampler with $\alpha=1$ (Algorithm~\ref{alg:1-fractional proximal sampler}), and $\rho^X_k \coloneqq \mathrm{Law}(x_k)$. If we choose the step size $\eta = {\Theta}(L^{-\frac{4}{\nu}}d^{-\frac{1}{2}})$ where $L=\frac{d+\nu}{\nu}$ is the $\nu/4$-H\"older constant of $V_\nu$,
    and
    assume, for simplicity, $\vert x_0\vert \leq \mathcal{O}(\sqrt{d})$, then, $\TV(\pi^X_\nu,\rho^X_N) \leq \varepsilon$ requires $N \ge  \Omega_{\nu,\tau}\big(d^{\frac{\tau+8\tau/\nu}{2+\tau}}\varepsilon^{-\frac{2(\tau-\nu)}{\nu(2+\tau)}}\big)$, for any $\tau\in (\nu,1)$. Further, by choosing $\tau=\max(\nu, 1-\frac{\log (\log (d/\varepsilon))}{\log (d/\varepsilon)} )$, we obtain 
    \begin{align*}
        N\ge \Tilde{\Omega}_\nu \Big( d^{\tfrac{\nu+8}{3\nu}} \varepsilon^{-\tfrac{2(1-\nu)}{3\nu}} \Big), \quad\text{in order for}\quad \TV(\pi^X_\nu,\rho^X_N) \leq \varepsilon.
    \end{align*}
\end{corollary}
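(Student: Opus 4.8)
The plan is to specialize Corollary~\ref{prop:fps_cauchy_lowerbound} to $\alpha = 1$ and to the prescribed choice of step size, and then optimize over the free parameter $\tau \in (\nu,1)$. First I would invoke Corollary~\ref{prop:fps_cauchy_lowerbound} with $\alpha = 1$: for any $\tau \in (\nu, 1)$ we have
\[
\TV(\pi^X_\nu,\rho^X_N) \ge C_{\nu,\tau}\, d^{\tfrac{\nu\tau}{2(\tau-\nu)}}\Big(\mb{E}[(1+|x_0|^2)^{\tfrac{\tau}{2}}] + m^{(1)}_{\tau} N^{\tfrac{\tau}{2}+1}\eta^{\tau}\Big)^{-\tfrac{\nu}{\tau-\nu}},
\]
using $\tau/\alpha = \tau$. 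The assumption $|x_0| \le \mc{O}(\sqrt d)$ gives $\mb{E}[(1+|x_0|^2)^{\tau/2}] = \mc{O}(d^{\tau/2})$, and from Appendix~\ref{appen:prelimilaries} the $\tau$th absolute moment of the $1$-stable (Cauchy) random variable with density $p_1^{(1)}$ satisfies $m^{(1)}_{\tau} = \Theta(d^{\tau/2})$ for $\tau < 1$. Substituting the step size $\eta = \Theta(L^{-4/\nu}d^{-1/2})$ with $L = (d+\nu)/\nu$, the $\eta^{\tau}$ factor contributes $\Theta(L^{-4\tau/\nu}d^{-\tau/2})$, so $m^{(1)}_\tau \eta^\tau = \Theta(L^{-4\tau/\nu})$. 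Hence for $N$ large enough that the second term dominates, the bracket is $\Theta\big(L^{-4\tau/\nu} N^{\tau/2+1}\big)$ up to the additive $d^{\tau/2}$ term which is lower order once $N \gtrsim 1$.

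Next I would set $\TV \le \varepsilon$ and solve for $N$. Ignoring the lower-order additive term, $\varepsilon \gtrsim d^{\nu\tau/(2(\tau-\nu))}\big(L^{-4\tau/\nu}N^{\tau/2+1}\big)^{-\nu/(\tau-\nu)}$ rearranges to
\[
N^{(\tau/2+1)\cdot \nu/(\tau-\nu)} \gtrsim d^{\tfrac{\nu\tau}{2(\tau-\nu)}} L^{\tfrac{4\tau}{\nu}\cdot \tfrac{\nu}{\tau-\nu}} \varepsilon^{-1},
\]
i.e.\ $N \gtrsim \big(d^{\nu\tau/2}\,L^{4\tau}\,\varepsilon^{-(\tau-\nu)}\big)^{\tfrac{2}{\nu(\tau+2)}}$. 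Plugging $L = \Theta(d/\nu)$ so that $L^{4\tau} = \Theta(d^{4\tau})$ up to $\nu$-dependent constants, the exponent of $d$ becomes $\tfrac{2}{\nu(\tau+2)}\big(\tfrac{\nu\tau}{2} + 4\tau\big) = \tfrac{\tau(\nu + 8)}{\nu(\tau+2)} = \tfrac{\tau + 8\tau/\nu}{2+\tau}$, and the exponent of $\varepsilon$ becomes $-\tfrac{2(\tau-\nu)}{\nu(\tau+2)}$. This yields the first displayed bound $N \ge \Omega_{\nu,\tau}\big(d^{(\tau + 8\tau/\nu)/(2+\tau)}\varepsilon^{-2(\tau-\nu)/(\nu(2+\tau))}\big)$ for every $\tau \in (\nu,1)$.

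Finally I would optimize the $\tau$-dependence. Since $\tau \mapsto 2(\tau-\nu)/(\nu(\tau+2))$ is increasing, the $\varepsilon$-exponent is maximized (best lower bound) by taking $\tau \to 1^-$, but at $\tau = 1$ the constant $C_{\nu,\tau}$ and the moment estimate $m^{(1)}_\tau = \Theta(d^{\tau/2})$ degenerate. The standard fix, also used for the $\delta$-parameter in Theorem~\ref{thm:LD_cauchy_lowerbound}, is to take $\tau = \max\big(\nu,\, 1 - \tfrac{\log\log(d/\varepsilon)}{\log(d/\varepsilon)}\big)$, which makes $1 - \tau = o(1)$ while keeping $d^{1-\tau}$ and $(1/\varepsilon)^{1-\tau}$ bounded by polylog factors; substituting $\tau = 1$ into the exponents up to such polylog corrections gives $d$-exponent $(1 + 8/\nu)/3 = (\nu+8)/(3\nu)$ and $\varepsilon$-exponent $-2(1-\nu)/(3\nu)$, producing $N \ge \tilde\Omega_\nu\big(d^{(\nu+8)/(3\nu)}\varepsilon^{-2(1-\nu)/(3\nu)}\big)$. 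The main obstacle is the bookkeeping around this $\tau \to 1$ limit: one must check that shrinking $\tau - \nu$ is permissible (it is, as long as $\nu < 1$ strictly, so the interval $(\nu,1)$ is nonempty) and that the degenerating constants $C_{\nu,\tau}$ and the $m^{(1)}_\tau$ bound only cost $\text{polylog}(d/\varepsilon)$ when $1 - \tau \asymp \log\log(d/\varepsilon)/\log(d/\varepsilon)$; everything else is the routine algebra of isolating $N$.
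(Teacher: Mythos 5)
Your proposal is correct and follows essentially the same route as the paper: specialize Corollary~\ref{prop:fps_cauchy_lowerbound} to $\alpha=1$, insert $m^{(1)}_\tau=\Theta(d^{\tau/2})$, the initialization bound, and $\eta=\Theta(d^{-1/2}L^{-4/\nu})$, solve for $N$, and push $\tau\to 1^-$ at a $\log\log/\log$ rate so the degenerating constants cost only polylog factors. The only imprecision is the claim that the additive $\mb{E}[(1+|x_0|^2)^{\tau/2}]=\mc{O}(d^{\tau/2})$ term is lower order "once $N\gtrsim 1$"; the correct justification is that it is dominated by the threshold $d^{\tau/2}\varepsilon^{-(\tau-\nu)/\nu}$ required for $\TV\le\varepsilon$ once $\varepsilon$ is small, which does not affect the conclusion.
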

The above result shows that when implementing the R$\alpha$SO in Algorithm~\ref{alg:fractional proximal sampler} with Algorithm~\ref{alg:1-fractional proximal sampler}, to sample from generalized Cauchy targets with $\nu \in (0,1)$, we can at best have an iteration complexity of order poly$(1/\varepsilon)$, matching the upper bounds in Corollary~\ref{cor:implementable case} up to certain factors. \vspace{-0.14in} 



\section{Overview of Proof Techniques}
\vspace{-.1in}

\noindent\textbf{Lower bounds.} 
We build on the techniques developed in~\cite{hairer2010convergence}. Let $\mu_t$ denotes the law of LD along its trajectory. To proceed, we need some $G : \mb{R}^d \to \mb{R}$ for which we can upper bound $\mu_t(G) \coloneqq \int G\dee\mu_t$, and some $f : \mb{R}^d \to \mb{R}$ that satisfies $\pi^X(G \geq y) \geq f(y)$ for all $y \in \mb{R}_+$. After finding the candidates $G$ and $f$, Lemma~\ref{lem:tv_lower_bound} in Appendix~\ref{app:proof_proximal_lower}
guarantees
$\TV(\pi^X,\mu_t) \geq \sup_{y \in \mb{R}_+}f(y) - {\mu_t(G)}/{y}.$
This technique relies on choosing $G$ such that it has heavy tails under $\pi^X$ leading to a large $f(y)$, while having light tails along the trajectory, thus small $\mu_t(G)$. By picking $G = \exp(\kappa V)$ with $\kappa \geq 1$, one can immediately observe that $\pi^X(G) = \infty$, thus $G$ indeed has heavy tails under $\pi^X$.

To control $\mu_t(G)$ along the trajectory, one can use the generator of LD to bound $\partial_t\mu_t(G)$. Recall the generator of LD, $\mc{L}_{\mathrm{LD}}(\cdot) = \Delta(\cdot) - \langle \nabla V, \nabla \cdot\rangle$. Therefore, with a choice of $G = \exp(\kappa V)$, controlling $\partial_t\mu_t(G)$ requires bounding the first and second derivatives of $V$. To avoid making extra assumptions for $V$ in the analysis of LD, we instead construct $G$ based on a surrogate potential $\tilde{V}(x) = \frac{d + \nu_2}{2}\ln(1 + \vert x \vert^2)$, which is an upper bound to the potential $V$. We then estimate $f$ based on this surrogate potential in Lemma~\ref{lem:cauchy_lower_tail}, and control the growth of $\mu_t(G)$ in Lemma~\ref{lem:LD_cauchy_moment_growth}. Combined with Lemma~\ref{lem:tv_lower_bound}, this leads to the proof of Theorem~\ref{thm:LD_cauchy_lowerbound}, with the details provided in Appendix~\ref{app:proof_proximal_lower}.

For the Gaussian proximal sampler, bounding $\rho^X_k(G)$ requires controlling the expectation of $G$ along the forward and backward heat flow. For the particular choice of $G = \exp(\kappa V)$, we show in Lemma~\ref{lem:prox_g_rec} that the growth of $\rho^X_k(G)$ can be controlled only by considering a forward heat flow with the corresponding generator $\mc{L}_{\mathrm{HF}} = \frac{1}{2}\Delta$. Therefore, given additional estimates on the second derivatives of $V$, we bound the growth of $\rho^X_k(G)$ in Lemma~\ref{lem:proximal_cauchy_moment_growth}. Once this bound is achieved, we can invoke Lemma~\ref{lem:tv_lower_bound} to finish the proof of Theorem~\ref{thm:proximal_cauchy_lowerbound}.\vspace{0.1in}

\vspace{-0.1in}
\noindent\textbf{Upper bounds.} Our upper bound analysis builds on that by \cite{chen2022improved} in the specific ways discussed next. We consider the change in $\chi^2$ divergence when we apply the two operations to the law $\rho_k^X$ to the iterates and the target $\pi^X$: $(i)$ evolving the two densities along the $\alpha$-fractional heat flow for time $\eta$ and $(ii)$ applying the R$\alpha$SO to the resulting densities. For the step (i), it is required to show that the solution along the fractional heat flow of the stable proximal sampler at any time, satisfies FPI. To show this, $(a)$ the convolution property of the FPI is proved in Lemma \ref{lem:subadditivity of FPI},
and $(b)$ the FPI parameter for the stable process follows from \cite[Theorem 23]{chafai2004entropies}. In Proposition \ref{prop:step 1 decay FPI}, it is then shown that the $\chi^2$ divergence decays exponentially fast along the fractional heat flow under the assumption of FPI. The aforementioned results enable us to prove the exponential decay of $\chi^2$ divergence along the fractional heat flow under FPI in Proposition \ref{prop:step 1 decay FPI}. To deal with the step (ii) above, we use the data processing inequality; see Proposition~\ref{prop:step 1 decay FPI}. These two steps together, enable us to derive the stated upper bounds for the stable proximal sampler. \vspace{-0.1in}

\vspace{-.05in}
\section{Discussion}\label{sec:discuss}
\vspace{-.1in}
We showed the limitations of Gaussian proximal samplers for high-accuracy heavy-tailed sampling, and proposed and analyzed stable proximal samplers, establishing that they are indeed high-accuracy algorithms. We now list a few important limitations and problems for future research:  
(i) It is important to develop efficiently implementable versions of the stable proximal sampler for all values of $\alpha\in(0,2)$, and characterize their complexity in terms of problem parameters, 
(ii)
Gaussian proximal samplers can be interpreted as a proximal point method for approximating the entropic regularized Wasserstein gradient flow of the KL objective~\citep{chen2022improved}. This leads to the question, \emph{can we provide a variational intepreration of the stable proximal sampler?} A potential approach is to leverage the results by~\cite{erbar2014gradient} on gradient flow interpretation of jump processes corresponding to the fractional heat equation,  
(iii) It is possible to use a non-standard It\^o process in the proximal sampler (in place of the $\alpha$-stable diffusion); see, for example,~\cite{erdogdu2018global,li2019stochastic,he2023mean}. With this modification, it is interesting to examine the rates under weighted Poincar\'{e} inequalities that also characterize heavy-tailed densities. There are two difficulties to overcome here: $(a)$
        How to generate an exact non-standard It\^o process? 
        $(b)$ How to implement the corresponding Restricted non-standard Gaussian Oracle, which requires the zeroth order information of the transition density of the It\^o process?
        In certain cases, non-standard It\^o diffusion can be interpreted as a Brownian motion on an embedded sub-manifold; thus, the approach in~\cite{gopi2023algorithmic} might be useful.

\newpage
\bibliography{Ref}

\newpage
\appendix 


\section{Lower Bound Proofs for the Langevin Diffusion and the Gaussian Proximal Sampler}\label{app:proof_proximal_lower}

While research on upper bounds of sampling algorithms' complexity has advanced considerably, the exploration of lower bounds is still nascent. \cite{Chewi2022-qd} explored the query complexity of sampling from strongly log-concave distributions in one-dimensional settings. \cite{Li2021-rc} established lower bounds for LMC in sampling from strongly log-concave distributions. \cite{chatterji2022oracle} presented lower bounds for sampling from strongly log-concave distributions with noisy gradients. \cite{ge2020estimating} focused on lower bounds for estimating normalizing constants of log-concave densities. Contributions by \cite{lee2021lower} and \cite{wu2021minimax} provide lower bounds in the metropolized algorithm category, including Langevin and Hamiltonian Monte Carlo, in strongly log-concave contexts. Finally, \cite{Chewi2022-fv} contributed to lower bounds in Fisher information for non-log-concave sampling.
In what follows, we take a different approach
and rely on the arguments developed in \cite{hairer2010convergence}.

We begin by stating the following result which drives our lower bound strategy.
\begin{lemma}[{\cite[Theorem 5.1]{hairer2010convergence}}]\label{lem:tv_lower_bound}
    Suppose $\mu$ and $\nu$ are probability measures on $\mb{R}^d$. Consider some $G : \mb{R}^d \to \mb{R}_+$ and $f : \mb{R}_+ \to \mb{R}_+$ satisfying
    $\mu(G \geq y) \geq f(y)$ for all $y\in\mb{R}_+$. Then,
    $$\TV(\mu,\nu) \geq \sup_{y \in \mb{R}_+}f(y) - \frac{\int G\dee\nu}{y}.$$
    In particular, suppose $\id\cdot f: \mb{R}_+ \ni y \mapsto yf(y) \in \mb{R}_+$ is a bijection, then
    $$\TV(\mu,\nu) \geq \frac{1}{2}f\Big((\id\cdot f)^{-1}\big(2m\big)\Big),$$
    for any $m \geq \int G\dee\nu$.
\end{lemma}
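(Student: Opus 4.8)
The statement to prove is Lemma~\ref{lem:tv_lower_bound}, which has two parts: a general variational lower bound on total variation, and a specialized form assuming $\id\cdot f$ is a bijection.

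\textbf{Proof plan.} The first inequality is the heart of the matter, and the plan is to exploit the dual/variational characterization of total variation distance together with Markov's inequality. First I would recall that for any event $A \subseteq \mb{R}^d$ (or more generally any measurable function taking values in $[0,1]$), $\TV(\mu,\nu) \geq \mu(A) - \nu(A)$. The natural choice is the superlevel set $A_y = \{x : G(x) \geq y\}$ for a parameter $y \in \mb{R}_+$. On one hand, the hypothesis gives $\mu(A_y) = \mu(G \geq y) \geq f(y)$. On the other hand, since $G \geq 0$, Markov's inequality under $\nu$ yields $\nu(A_y) = \nu(G \geq y) \leq \frac{1}{y}\int G \dee\nu$. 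Combining, $\TV(\mu,\nu) \geq \mu(A_y) - \nu(A_y) \geq f(y) - \frac{1}{y}\int G\dee\nu$. Since this holds for every $y \in \mb{R}_+$, I can take the supremum over $y$, giving the first claimed bound.

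\textbf{The specialized form.} For the second part, set $m \geq \int G\dee\nu$ and let $y^\star \coloneqq (\id\cdot f)^{-1}(2m)$, which is well-defined because $\id\cdot f$ is assumed to be a bijection on $\mb{R}_+$. By definition $y^\star f(y^\star) = 2m$, hence $\frac{m}{y^\star} = \frac{f(y^\star)}{2}$. Plugging $y = y^\star$ into the first inequality and using $m \geq \int G\dee\nu$:
\begin{align*}
\TV(\mu,\nu) \geq f(y^\star) - \frac{\int G\dee\nu}{y^\star} \geq f(y^\star) - \frac{m}{y^\star} = f(y^\star) - \frac{f(y^\star)}{2} = \frac{1}{2}f(y^\star),
\end{align*}
which is exactly $\frac{1}{2}f\big((\id\cdot f)^{-1}(2m)\big)$.

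\textbf{Anticipated obstacle.} The argument above is essentially routine once one recognizes the right test set; there is no serious analytic difficulty. The only point requiring a little care is measurability/integrability bookkeeping: one should note that $G \geq 0$ ensures $\int G\dee\nu \in [0,\infty]$ is well-defined (if it is $+\infty$ the bound is vacuous for that $y$ but still valid), and that the supremum over $y$ is legitimate since each individual inequality holds. For the specialized version, the genuine assumption doing the work is that $\id \cdot f$ is a bijection $\mb{R}_+ \to \mb{R}_+$, which guarantees $y^\star$ exists, is unique, and is strictly positive; I would simply remark that in all our applications $f$ will be chosen (via the surrogate potential) so that this holds. Since the result is quoted from \cite[Theorem 5.1]{hairer2010convergence}, it would also suffice to cite it directly, but the self-contained two-line argument above is worth including for completeness.
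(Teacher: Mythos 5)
Your proof is correct and follows exactly the paper's argument: apply the TV lower bound $\TV(\mu,\nu)\ge\mu(A_y)-\nu(A_y)$ to the superlevel set $\{G\ge y\}$, bound $\nu(G\ge y)$ by Markov's inequality, and for the specialized form choose $y^\star=(\id\cdot f)^{-1}(2m)$ so that $m/y^\star=f(y^\star)/2$. No gaps; your additional remarks on well-definedness are fine but not needed.
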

\begin{proof}
    By the definition of total variation and Markov's inequality, for any $y > 0$
    $$\TV(\mu,\nu) \geq \mu(G \geq y) - \nu(G \geq y) \geq f(y) - \frac{\int G\dee\nu}{y}.$$
    When $\id\cdot f$ is invertible, choosing $y = (\id\cdot f)^{-1}(2m)$ implies $yf(y) = 2m$ and yields the desired result.
\end{proof}
To apply Lemma~\ref{lem:tv_lower_bound} when the target density satisfies Assumption~\ref{assump:V_growth_bound}, we need to establish tail lower bounds for this density, which we do so via the following lemma. In the following, let $\omega_d \coloneqq \frac{\pi^{d/2}}{\Gamma((d+2)/2)}$ denote the volume of the unit $d$-ball.
\begin{lemma}\label{lem:cauchy_lower_tail}
    Suppose $\pi^X(x) \propto \exp(-V(x))$ satisfies Assumption~\ref{assump:V_growth_bound}. Then, for all $R > 0$,
    $$\pi^X(\vert x \vert \geq R) \geq \frac{2de^{-\nu_1/d}}{(d+\nu_1)\Gamma(\nu_1/2)}(d/2)^{\nu_1/2}(1 + R^{-2})^{-(d+\nu_2)/2}R^{-\nu_2}.$$
    When focusing on dependence on $R$ and $d$, we obtain,
    $$\pi^X(\vert x \vert \geq R) \geq C_{\nu_1}d^{\nu_1/2}(1+R^{-2})^{-(d+\nu_2)/2}R^{-\nu_2},$$
    where $C_{\nu_1} = \frac{2^{1-\nu_1/2}e^{-\nu_1}}{(1 + \nu_1)\Gamma(\nu_1/2)}$.
\end{lemma}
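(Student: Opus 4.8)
The plan is to reduce a tail lower bound for $\pi^X$ to an integral estimate, exploiting only the upper bound on $\langle x, \nabla V(x)\rangle$ from Assumption~\ref{assump:V_growth_bound} (which controls how fast $V$ can grow) together with the lower bound (which is what produces the $d^{\nu_1/2}$ factor via the normalizing constant). First I would integrate the radial ODE inequality implied by the assumption: writing $v(r) \coloneqq V(x)$ along a ray $x = r\theta$ with $|\theta| = 1$, the hypothesis $\langle x,\nabla V(x)\rangle \le \frac{(d+\nu_2)|x|^2}{1+|x|^2}$ gives $r\,v'(r) \le \frac{(d+\nu_2)r^2}{1+r^2}$, hence $v(r) - v(s) \le \frac{d+\nu_2}{2}\ln\!\frac{1+r^2}{1+s^2}$ for $r \ge s$; similarly the lower bound gives $v(r)-v(s) \ge \frac{d+\nu_1}{2}\ln\!\frac{1+r^2}{1+s^2}$. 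Equivalently, $e^{-V(x)} \ge e^{-V(0)}\,(1+|x|^2)^{-(d+\nu_2)/2}$ and $e^{-V(x)} \le e^{-V(0)}\,(1+|x|^2)^{-(d+\nu_1)/2}$.

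Next I would estimate the normalizing constant $Z = \int_{\mb R^d} e^{-V}$ from above using the second inequality and spherical coordinates: $Z \le e^{-V(0)} d\,\omega_d \int_0^\infty \frac{r^{d-1}}{(1+r^2)^{(d+\nu_1)/2}}\,\dee r$. The substitution $u = r^2/(1+r^2)$ (or $t = 1+r^2$) turns this into a Beta integral $\tfrac12 B(d/2,\nu_1/2) = \tfrac{\Gamma(d/2)\Gamma(\nu_1/2)}{2\Gamma((d+\nu_1)/2)}$, so $Z \le e^{-V(0)} d\,\omega_d \cdot \tfrac{\Gamma(d/2)\Gamma(\nu_1/2)}{2\Gamma((d+\nu_1)/2)}$. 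Then I would lower bound the unnormalized tail mass $\int_{|x|\ge R} e^{-V} \ge e^{-V(0)} d\,\omega_d \int_R^\infty \frac{r^{d-1}}{(1+r^2)^{(d+\nu_2)/2}}\,\dee r$, and for a clean closed form bound the integrand below by factoring out $(1+R^{-2})^{-(d+\nu_2)/2}$: on $[R,\infty)$ we have $(1+r^2)^{(d+\nu_2)/2} = r^{d+\nu_2}(1+r^{-2})^{(d+\nu_2)/2} \le r^{d+\nu_2}(1+R^{-2})^{(d+\nu_2)/2}$, so $\int_R^\infty \frac{r^{d-1}}{(1+r^2)^{(d+\nu_2)/2}}\dee r \ge (1+R^{-2})^{-(d+\nu_2)/2}\int_R^\infty r^{-1-\nu_2}\dee r = (1+R^{-2})^{-(d+\nu_2)/2}\frac{R^{-\nu_2}}{\nu_2}$. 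Dividing the tail bound by $Z$ cancels $e^{-V(0)}$ and $d\,\omega_d$, leaving $\pi^X(|x|\ge R) \ge \frac{2\Gamma((d+\nu_1)/2)}{\nu_2\,\Gamma(d/2)\Gamma(\nu_1/2)}(1+R^{-2})^{-(d+\nu_2)/2}R^{-\nu_2}$.

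The remaining work is to extract the advertised dimension dependence $d^{\nu_1/2}$ from the ratio $\Gamma((d+\nu_1)/2)/\Gamma(d/2)$. Here I would use a standard Gamma-ratio estimate — e.g. Wendel's inequality or Kershaw's inequality, which give $\Gamma(x+s)/\Gamma(x) \ge (x+s/2-1/2)^s$ or the cruder $\Gamma(x+s)/\Gamma(x)\ge x^s e^{-s}$-type bound — to conclude $\Gamma((d+\nu_1)/2)/\Gamma(d/2) \ge c_{\nu_1} d^{\nu_1/2}$ with an explicit constant; matching the exact constants $\tfrac{2de^{-\nu_1/d}}{(d+\nu_1)\Gamma(\nu_1/2)}(d/2)^{\nu_1/2}$ stated in the lemma just amounts to a careful choice of this Gamma-ratio bound (the factor $\tfrac{d}{d+\nu_1}$ and $e^{-\nu_1/d}$ suggest using $\Gamma((d+\nu_1)/2)\ge (d/2)^{\nu_1/2}e^{-\nu_1/d}\Gamma(d/2)\cdot\tfrac{?}{?}$, obtained from log-convexity of $\Gamma$ and $\ln(1+t)\le t$). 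The one genuine subtlety — the ``main obstacle'' — is ensuring the constant one peels off is dimension-free in the right way: the integrand bound must not introduce hidden $d$-dependence, and the Gamma ratio must be lower-bounded (not just asymptotically) by $d^{\nu_1/2}$ times a constant depending only on $\nu_1$. Everything else is routine calculus; I would present the $R$- and $d$-dependent form $\pi^X(|x|\ge R)\ge C_{\nu_1}d^{\nu_1/2}(1+R^{-2})^{-(d+\nu_2)/2}R^{-\nu_2}$ as the clean takeaway, since that is what feeds into Lemma~\ref{lem:tv_lower_bound} with $G=\exp(\kappa\tilde V)$.
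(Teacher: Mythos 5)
Your proposal is correct and follows essentially the same route as the paper: integrate the radial bounds on $\langle x,\nabla V(x)\rangle$ to sandwich $e^{-V}$ between generalized Cauchy profiles, upper-bound $Z$ via a Beta integral, lower-bound the tail mass by factoring out $(1+R^{-2})^{-(d+\nu_2)/2}$, and finish with a Gamma-ratio lower bound (the paper invokes a lemma of \cite{mousavi23towards} giving $\Gamma((d+\nu_1)/2)/\Gamma(d/2) \ge \tfrac{2d e^{-\nu_1/d}}{d+\nu_1}(d/2)^{\nu_1/2}$, which plays exactly the role of your Wendel/Kershaw step). The only discrepancy is that you correctly retain the factor $1/\nu_2$ from $\int_R^\infty r^{-\nu_2-1}\,\dee r$, which the paper's computation silently drops.
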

\begin{proof}
Without loss of generality assume $V(0) = 0$. Via Assumption~\ref{assump:V_growth_bound}, we have the estimates for $V$,
$$V(x) = \int_{t=0}^1\langle x,\nabla V(tx)\rangle\dee t \leq (d+\nu)\int_{t=0}^1\frac{t\vert x\vert^2\dee t}{1 + \vert tx\vert^2} = \frac{d+\nu_2}{2}\ln(1 + \vert x\vert^2),$$
and similarly
$$V(x) \geq \frac{d + \nu_1}{2}\ln(1 + \vert x \vert^2).$$
Consequently, using the spherical coordinates,
\begin{align*}
    \pi^X(\vert x \vert \geq R) &\geq \frac{1}{Z}\int_{\vert x\vert \geq R}(1 + \vert x\vert^2)^{-(d+\nu_2)/2}\dee x\\
    & = \frac{d\omega_d}{Z}\int_{r\geq R}(1 + r^2)^{-(d+\nu_2)/2}r^{d-1}\dee r\\
    &\geq \frac{d\omega_d(1 + R^{-2})^{-(d+\nu_2)/2}}{Z}\int_{r\geq R}r^{-\nu_2-1}\dee r\\
    &= \frac{d\omega_d(1 + R^{-2})^{-(d+\nu_2)/2}}{Z_d}R^{-\nu_2}.
\end{align*}
Next, using the lower bound established on $V$ and spherical coordinates, we obtain,
\begin{align*}
    Z &\leq \int_{\mb{R}^d} (1 + \vert x\vert^2)^{-(d + \nu_1)/2}\dee x\\
    &= d\omega_d\int_0^\infty (1 + r^2)^{-(d+\nu_1)/2}r^{d-1}\dee r\\
    &= \frac{1}{2}d\omega_d\int_0^\infty u^{\nu_1/2-1}(1-u)^{d/2-1}\dee u\\
    &= \frac{1}{2}d\omega_d\mathrm{B}(\nu_1/2,d/2)\\
    &= \frac{d\omega_d\Gamma(\nu_1/2)\Gamma(d/2)}{2\Gamma((d+\nu_1)/2)},
\end{align*}
where $\mathrm{B}$ denotes the beta function. Plugging back into our tail lower bound, we obtain,
$$\pi^X(\vert x\vert \geq R) \geq \frac{2\Gamma((d+\nu_1)/2)}{\Gamma(\nu_1/2)\Gamma(d/2)}(1 + R^{-2})^{-(d+\nu_2)/2}R^{-\nu_2}.$$
Moreover, by~\cite[Lemma 32]{mousavi23towards} we have
$$\frac{\Gamma((d+\nu_1)/2)}{\Gamma(d/2)} = \frac{d}{d+\nu_1}\frac{\Gamma((d+\nu_1+2)/2)}{\Gamma((d+2)/2)} \geq \frac{2de^{-\nu_1/d}}{d+\nu_1}(d/2)^{\nu_1/2},$$
which completes the proof.
\end{proof}

Another element of Lemma \ref{lem:tv_lower_bound} is controlling the growth of $\mb{E}[G(X_t)]$ throughout the process. The following lemma achieves such control under the Langevin diffusion.
\begin{lemma}\label{lem:LD_cauchy_moment_growth}
    Suppose $(X_t)_{t\geq 0}$ is the solution to the Langevin diffusion starting at $X_0$ with the corresponding potential $V(x)$ satisfying Assumption~\ref{assump:V_growth_bound}. Let $G(x) = \exp(\kappa\tilde{V}(x))$ where $\tilde{V}(x) = \frac{d+\nu_2}{2}\ln(1 + \vert x \vert^2)$ and $\kappa \geq \frac{2}{d+\nu_2}\lor 1$. Then,
    $$\mb{E}[G(X_t)] \leq \left(\mb{E}[G(X_0)]^{\frac{2}{\kappa(d+\nu_2)}} + 4\kappa(d+\nu_2)t\right)^{\frac{\kappa(d+\nu_2)}{2}}.$$
\end{lemma}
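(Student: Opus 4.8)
The plan is to control $t\mapsto\mb{E}[G(X_t)]$ through the infinitesimal generator of the Langevin diffusion, $\mc{L}_{\mathrm{LD}}=\Delta-\langle\nabla V,\nabla\,\cdot\,\rangle$, and then integrate a scalar differential inequality. Write $p\coloneqq\kappa(d+\nu_2)/2$, so that $G(x)=\exp(\kappa\tilde V(x))=(1+|x|^2)^{p}$. The hypothesis $\kappa\geq\frac{2}{d+\nu_2}\lor1$ is used exactly through the two consequences $p\geq1$ (from $\kappa\geq 2/(d+\nu_2)$) and $2p=\kappa(d+\nu_2)\geq d$ (from $\kappa\geq1$), and the target inequality is precisely $\mb{E}[G(X_t)]\leq(\mb{E}[G(X_0)]^{1/p}+8pt)^{p}$ since $4\kappa(d+\nu_2)=8p$ and $\tfrac{2}{\kappa(d+\nu_2)}=1/p$.

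First I would compute $\nabla G(x)=2px(1+|x|^2)^{p-1}$ and $\Delta G(x)=2p(1+|x|^2)^{p-2}\big(d+(d+2p-2)|x|^2\big)$. For the drift contribution, the lower bound $\langle x,\nabla V(x)\rangle\geq(d+\nu_1)|x|^2/(1+|x|^2)\geq0$ from Assumption~\ref{assump:V_growth_bound} gives $\langle\nabla V(x),\nabla G(x)\rangle=2p(1+|x|^2)^{p-1}\langle x,\nabla V(x)\rangle\geq0$, so this term only helps and is dropped. For the Laplacian, since $\frac{1}{1+|x|^2}$ and $\frac{|x|^2}{1+|x|^2}$ are nonnegative and sum to one, $d+(d+2p-2)|x|^2=(1+|x|^2)\big(d\cdot\frac{1}{1+|x|^2}+(d+2p-2)\cdot\frac{|x|^2}{1+|x|^2}\big)\leq(d+2p-2)(1+|x|^2)\leq4p(1+|x|^2)$, where the convex-combination bound uses $p\geq1$ and the last step uses $d\leq2p$. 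Hence $\mc{L}_{\mathrm{LD}}G(x)\leq\Delta G(x)\leq 8p^2(1+|x|^2)^{p-1}=8p^2\,G(x)^{(p-1)/p}$.

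Next I would pass from this pointwise generator bound to a bound on $\phi(t)\coloneqq\mb{E}[G(X_t)]$. If $\mb{E}[G(X_0)]=\infty$ the claim is trivial, so assume finiteness. Because $G$ is unbounded I localize with $\tau_n\coloneqq\inf\{t:|X_t|\geq n\}$ and set $\phi_n(t)\coloneqq\mb{E}[G(X_{t\wedge\tau_n})]$; the stopped process satisfies $|X_{t\wedge\tau_n}|\leq n$, so $\phi_n$ is finite and Dynkin's formula applies directly, yielding $\phi_n'(t)=\mb{E}[\mc{L}_{\mathrm{LD}}G(X_t)\mathbf{1}_{t<\tau_n}]\leq 8p^2\,\mb{E}[G(X_{t\wedge\tau_n})^{(p-1)/p}]$. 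Since $z\mapsto z^{(p-1)/p}$ is concave on $\mb{R}_+$ for $p\geq1$, Jensen's inequality gives $\phi_n'(t)\leq 8p^2\,\phi_n(t)^{(p-1)/p}$ (here $\phi_n(t)\geq1>0$ because $G\geq1$). Rewriting this as $\frac{\dee}{\dee t}\phi_n(t)^{1/p}\leq 8p$ and integrating gives $\phi_n(t)^{1/p}\leq\mb{E}[G(X_0)]^{1/p}+8pt$, i.e. $\mb{E}[G(X_{t\wedge\tau_n})]\leq(\mb{E}[G(X_0)]^{1/p}+8pt)^{p}$. Letting $n\to\infty$, so $\tau_n\to\infty$ by non-explosion of the diffusion (which follows from $\langle x,\nabla V(x)\rangle\geq0$, i.e.\ an inward/bounded drift), and applying Fatou's lemma to $G\geq0$ yields $\mb{E}[G(X_t)]\leq(\mb{E}[G(X_0)]^{1/p}+8pt)^{p}$; substituting $p=\kappa(d+\nu_2)/2$ gives the stated bound.

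The genuinely substantive step is the Laplacian estimate, where the exponents and the two uses of the hypothesis on $\kappa$ must be tracked carefully to land on the exact constant $4\kappa(d+\nu_2)$; the remainder is the textbook combination of Dynkin's formula, Jensen's inequality, and an elementary ODE comparison. The only real nuisance is the localization/non-explosion bookkeeping needed to justify Dynkin's formula and the limit $n\to\infty$ for the unbounded test function $G$ under only the one-sided control on $\nabla V$ afforded by Assumption~\ref{assump:V_growth_bound}; this is routine but should not be omitted.
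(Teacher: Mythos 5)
Your proof is correct and follows essentially the same route as the paper's: compute $\mc{L}G$, discard the drift term using the lower bound $\langle x,\nabla V(x)\rangle\ge 0$, bound the Laplacian contribution by $2\kappa^2(d+\nu_2)^2G^{1-2/(\kappa(d+\nu_2))}$ (your $8p^2G^{(p-1)/p}$), apply Jensen, and integrate the resulting differential inequality. The only differences are cosmetic (working with $(1+|x|^2)^p$ directly rather than via derivatives of $\tilde V$) plus your added localization/Dynkin bookkeeping, which the paper omits but which is a welcome extra level of rigor.
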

\begin{proof}
    Recall the generator of the Langevin diffusion $\mc{L}(\cdot) = \Delta\cdot - \binner{\nabla V}{\nabla\cdot}$. Then,
    \begin{align*}
        \frac{\dee \mb{E}[G(X_t)]}{\dee t} &= \mb{E}[\mc{L}G(X_t)]\\
        &= \kappa\mb{E}\left[\left(\kappa|\nabla\tilde V|^2 + \Delta \tilde V - \langle \nabla \tilde V, \nabla V\rangle\right)G\right]\\
        &\leq \kappa\mb{E}\left[\left(\kappa|\nabla\tilde V|^2 + \Delta \tilde V\right)G\right] \qquad \mathrm{(Assumption~\ref{assump:V_growth_bound})}\\
        &\leq 2\kappa^2(d+\nu_2)^2\mb{E}\left[\frac{G(X_t)}{1 + |X_t|^2}\right]\\
        &= 2\kappa^2(d+\nu_2)^2\mb{E}\left[G(X_t)^{1-\tfrac{2}{\kappa(d+\nu_2)}}\right]\\
        &\leq 2\kappa^2(d+\nu_2)^2\mb{E}[G(X_t)]^{1-\tfrac{2}{\kappa(d+\nu_2)}} \qquad \mathrm{(Jensen's~Inequality)}.
    \end{align*}
    Integrating the above inequality completes the proof.
\end{proof}
With the above lemmas in hand, we are ready to present the proof of Theorem~\ref{thm:LD_cauchy_lowerbound}.\\

\begin{proof}[Proof of Theorem~\ref{thm:LD_cauchy_lowerbound}]
To apply Lemma~\ref{lem:tv_lower_bound} we choose $G(x) = \exp(\kappa \tilde{V}(x))$ where $\tilde{V}(x) = \frac{d + \nu_2}{2}\ln(1 + \vert x \vert^2)$ with $\kappa \geq 1\lor \tfrac{2}{d+\nu_2}$. By Lemma~\ref{lem:cauchy_lower_tail} we have
$$\pi^X(G(x) \geq y) \geq \pi^X\left(|x| \geq y^{\frac{1}{\kappa(d+\nu_2)}}\right) \geq C_{\nu_1} d^{\nu_1/2}\left(1 + y^{\frac{-2}{\kappa(d+\nu_2)}}\right)^{-(d+\nu_2)/2}y^{\frac{-\nu_2}{\kappa(d+\nu_2)}}.$$
Moreover, define
$$g(t) \coloneqq \left(g(0)^{\frac{2}{\kappa(d+\nu_2)}} + 4\kappa(d+\nu_2)t\right)^{\frac{\kappa(d+\nu_2)}{2}},$$
with $g(0) \coloneqq \mb{E}[G(X_0)]$. Then by Lemma~\ref{lem:LD_cauchy_moment_growth} we have $\mb{E}[G(X_t)] \leq g(t)$ and we can invoke Lemma~\ref{lem:tv_lower_bound} to obtain
\begin{align*}
    \TV(\pi^X,\mu_t) &\geq \sup_{y \in \mb{R}_+}C_{\nu_1} d^{\nu_1/2}\left(1 + y^{\frac{-2}{\kappa(d+\nu_2)}}\right)^{-(d+\nu_2)/2}y^{\frac{-\nu_2}{\kappa(d+\nu_2)}} - \frac{g(t)}{y}.\\
    &\geq \sup_{y \in \mb{R}_+}C_{\nu_1} d^{\nu_1/2}\exp\left(-\frac{(d+\nu_2)y^{\frac{-2}{\kappa(d+\nu_2)}}}{2}\right)y^{\frac{-\nu_2}{\kappa(d+\nu_2)}} - \frac{g(t\lor 1)}{y},
\end{align*}
where we used the fact that $1+x \leq e^x$ for all $x \in \mb{R}$ and $g(t)$ is non-decreasing in $t$. Choose 
$$y^* \coloneqq C'_{\nu_1,\nu_2}\left(\frac{g(t\lor 1)}{d^{\nu_1/2}}\right)^{\frac{\kappa(d+\nu_2)}{\kappa(d+\nu_2) - \nu_2}},$$
for a sufficiently large constant $C'_{\nu_1,\nu_2} \geq 1$. For simplicity, let $$\tilde{g}(t) \coloneqq \frac{g(t\lor1)^{\frac{2}{\kappa(d+\nu_2)}}}{4\kappa(d+\nu_2)},$$
and notice that
\begin{equation}\label{eq:def_y_star}
    y^* = C'_{\nu_1,\nu_2} d^{\frac{\kappa(d+\nu_2)}{2}\cdot\frac{\kappa(d+\nu_2)-\nu_1}{\kappa(d+\nu_2)-\nu_2}}\left(4\kappa(1+\nu_2/d)\tilde{g}(t)\right)^{\frac{\kappa^2(d+\nu_2)^2}{2(\kappa(d+\nu_2)-\nu_2)}}.
\end{equation}
Using the fact that
$$y^* \geq (4\kappa)^{\frac{\kappa^2(d+\nu_2)^2}{2(\kappa(d+\nu_2)-\nu_2)}}d^{\frac{\kappa(d+\nu_2)}{2}\cdot\frac{\kappa(d+\nu_2)-\nu_1}{\kappa(d+\nu_2)-\nu_2}},$$
we have
\begin{align*}
    \TV(\pi^X,\mu_t) &\geq C_{\nu_1}\exp\Big(-\frac{1+\nu_2/d}{8\kappa}\cdot d^{\frac{\nu_1-\nu_2}{\kappa(d+\nu_2) - \nu_2}}\Big)d^{\nu_1/2}{y^*}^{\frac{-\nu_2}{\kappa(d + \nu_2)}} - \frac{g(t\lor 1)}{y^*}\\
    &\geq \tilde{C}_{\nu_1,\nu_2}d^{\nu_1/2}{y^*}^{\frac{-\nu_2}{\kappa(d + \nu_2)}} - \frac{g(t\lor 1)}{y^*},
\end{align*}
where $\tilde{C}_{\nu_1,\nu_2} = C_{\nu_1}e^{-\frac{1+\nu_2/d}{8}}$. By plugging in the value of $y^*$ from \eqref{eq:def_y_star}, we obtain,
\begin{align*}
&\TV(\pi^X,\mu_t)\\
\geq &\left\{\tilde{C}_{\nu_1,\nu_2} {C'_{\nu_1,\nu_2}}^{\frac{-\nu_2}{\kappa(d+\nu_2)}} - {C'_{\nu_1,\nu_2}}^{-1}\right\}\left\{d^{\frac{\nu_1-\nu_2}{2}}\left(2\kappa(1+\nu_2/d)\tilde{g}(t)\right)^{\frac{-\nu_2}{2}}\right\}^{1 + \frac{\nu_2}{\kappa(d+\nu_2) - \nu_2}}.
\end{align*}
Thus for sufficiently large $C'_{\nu_1,\nu_2}$, there exists $C''_{\nu_1,\nu_2}$ such that
\begin{align*}
    \TV(\pi^X,\mu_t) \geq C''_{\nu_1,\nu_2}\left\{d^{\frac{\nu_1-\nu_2}{2}}\left(4\kappa(1+\nu/d)\tilde{g}(t))\right)^{\frac{-\nu_2}{2}}\right\}^{1 + \frac{\nu_2}{\kappa(d+\nu_2) - \nu_2}}.
\end{align*}
Choosing $\kappa$ according to the statement of the theorem completes the proof.
\end{proof}
In order to prove a similar theorem for the Gaussian proximal sampler, we control the growth of $\mb{E}[G(x_k)]$ for the iterates of the proximal sampler via the following lemmas.
\begin{lemma}\label{lem:prox_g_rec}
    Suppose $(x_k,y_k)_k$ are the iterates of the Gaussian proximal sampler with step size $\eta$ and target density $\pi^X \propto \exp(-V)$ for some $V : \mb{R}^d \to \mb{R}$. Let $G(x) = \exp(\kappa V(x))$ with $\kappa \geq 1$. Then, for every $k \geq 0$,
    \begin{equation*}
        \mb{E}[G(x_{k+1})] \leq \mb{E}[G(x_k + \sqrt{2\eta} z)],
    \end{equation*}
    where $z \sim \mathcal{N}(0,I_d)$ is sampled independently from $x_k$.
\end{lemma}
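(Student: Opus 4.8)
The plan is to condition on the auxiliary variable $y_k$, rewrite the one-step moment of $G$ purely in terms of the heat semigroup $Q_t$ with Gaussian kernel $g_t(x,y)=(2\pi t)^{-d/2}\exp(-|x-y|^2/(2t))$ (the semigroup generated by $\tfrac12\Delta$, so that $Q_\eta Q_\eta=Q_{2\eta}$), and then observe that the restricted Gaussian oracle only \emph{decreases} the relevant moment relative to continuing the heat flow. By the definition of the RGO we have $\pi^{X|Y}(x\mid y)=\pi^X(x)\,g_\eta(x,y)/\pi^Y(y)$ with $\pi^Y(y)=\int \pi^X(x)\,g_\eta(x,y)\,\dee x=(Q_\eta\pi^X)(y)$, hence
$$\mb{E}\big[G(x_{k+1})\mid y_k=y\big]=\int G(x)\,\pi^{X|Y}(x\mid y)\,\dee x=\frac{Q_\eta(G\pi^X)(y)}{Q_\eta\pi^X(y)}.$$
Since $y_k\sim\rho_k^Y=Q_\eta\rho_k^X$ (using the symmetry of $g_\eta$) and all integrands are nonnegative, the law of total expectation gives $\mb{E}[G(x_{k+1})]=\int \rho_k^Y(y)\,\frac{Q_\eta(G\pi^X)(y)}{Q_\eta\pi^X(y)}\,\dee y$.

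The crux — and essentially the only step with content — is the pointwise inequality $Q_\eta(G\pi^X)\leq (Q_\eta\pi^X)\,(Q_\eta G)$. Fixing $y$ and letting $\nu$ be the probability measure with density $g_\eta(y,\cdot)$, after cancelling the normalizing constant of $\pi^X$ this reads $\mb{E}_\nu[G\,e^{-V}]\leq\mb{E}_\nu[G]\,\mb{E}_\nu[e^{-V}]$, i.e.\ $\mathrm{Cov}_\nu(e^{\kappa V},e^{-V})\leq 0$. This is a one-dimensional correlation (Chebyshev-sum) inequality: $e^{\kappa V}$ is a nondecreasing function of $V$ whereas $e^{-V}$ is a nonincreasing function of $V$, so for independent $X,X'\sim\nu$ the increments $e^{\kappa V(X)}-e^{\kappa V(X')}$ and $e^{-V(X)}-e^{-V(X')}$ always have opposite signs, whence $\mathrm{Cov}_\nu(e^{\kappa V},e^{-V})=\tfrac12\mb{E}\big[(e^{\kappa V(X)}-e^{\kappa V(X')})(e^{-V(X)}-e^{-V(X')})\big]\leq 0$. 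The inequality is to be read in $[0,\infty]$: it is trivial when the right-hand side is infinite, and when $\mb{E}_\nu[e^{\kappa V}]<\infty$ all three expectations are finite (using $e^{(\kappa-1)V}\leq e^{\kappa V}+1$ for $\kappa\geq 1$), so the covariance manipulation is legitimate.

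Finally I would assemble the bound: substituting the pointwise inequality into the formula for $\mb{E}[G(x_{k+1})]$, using $\rho_k^Y=Q_\eta\rho_k^X$, reordering the nonnegative integrals, and invoking the semigroup identity $Q_\eta Q_\eta=Q_{2\eta}$,
$$\mb{E}[G(x_{k+1})]\leq\int \rho_k^Y(y)\,(Q_\eta G)(y)\,\dee y=\int (Q_\eta\rho_k^X)(y)\,(Q_\eta G)(y)\,\dee y=\int \rho_k^X(x)\,(Q_{2\eta}G)(x)\,\dee x=\mb{E}\big[G(x_k+\sqrt{2\eta}\,z)\big],$$
since $Q_{2\eta}$ has kernel $g_{2\eta}$, which is exactly the conditional density of $x_k+\sqrt{2\eta}\,z$ given $x_k$. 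If the final right-hand side is $+\infty$ there is nothing to prove; otherwise every term in the chain is finite and the argument goes through. I expect the only real obstacle to be recognizing the covariance structure hidden in the RGO tilt — once that correlation inequality is identified, the remainder is routine heat-semigroup bookkeeping.
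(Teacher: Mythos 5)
Your proposal is correct, and its overall skeleton coincides with the paper's: condition on $y_k$, express $\mb{E}[G(x_{k+1})\mid y_k]$ as the ratio $\mb{E}_\nu[Ge^{-V}]/\mb{E}_\nu[e^{-V}]$ with $\nu=\mc{N}(y_k,\eta I_d)$, bound that ratio by $\mb{E}_\nu[G]$, and then compose the two heat flows of time $\eta$ into one of time $2\eta$. The only genuine difference is how the central ratio inequality is established. The paper writes the numerator and denominator as $\mb{E}_\nu[G^{1-1/\kappa}]$ and $\mb{E}_\nu[G^{-1/\kappa}]$ and applies Jensen's inequality twice (concavity of $x\mapsto x^{1-1/\kappa}$ and convexity of $x\mapsto x^{-1/\kappa}$, which is where $\kappa\ge 1$ enters), obtaining $\mb{E}_\nu[G]^{1-1/\kappa}\cdot\mb{E}_\nu[G]^{1/\kappa}=\mb{E}_\nu[G]$. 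You instead recognize the same inequality as $\mathrm{Cov}_\nu(e^{\kappa V},e^{-V})\le 0$ and invoke the Chebyshev correlation inequality for a nondecreasing and a nonincreasing function of the scalar $V(X)$. Both arguments are sound; yours is slightly more general in that the monotonicity argument needs only $\kappa>0$ (and indeed only that $G$ be a nondecreasing function of $V$), with $\kappa\ge1$ used solely for the integrability bookkeeping, whereas the paper's double-Jensen route is tied to the specific power structure of $G$. Your handling of the edge cases (infinite right-hand side, finiteness of $\mb{E}_\nu[e^{-V}]$ via integrability of $\pi^X$) is also careful and fills in details the paper leaves implicit.
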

\begin{proof}
    Recall that $\pi^{X|Y}(x|y) \propto \exp\big(-V(x) - \frac{\vert x - y\vert^2}{2\eta}\big)$. Therefore,
    \begin{align*}
        \mb{E}[G(x_{k+1}) \mmid y_k] &= C_{y_k}\int\frac{\exp\big((\kappa-1)V(x)-\frac{\vert x - y_k\vert^2}{2\eta}\big)}{(2\pi\eta)^{d/2}}\dee x\\
        &= C_{y_k}\mb{E}[G(y_k + \sqrt{\eta} z_1)^{1-1/\kappa}\mmid y_k],
    \end{align*}
    where $z_1 \sim \mathcal{N}(0,I_d)$. Furthermore,
    \begin{align*}
        C_{y_k} &= \frac{1}{(2\pi\eta)^{d/2}}\int\exp\Big(-V(x)-\frac{\vert x - y_k\vert^2}{2\eta}\Big)\dee x\\
        &= \mb{E}[G(y_k + \sqrt{\eta} z_1)^{-1/\kappa} \mmid y_k].
    \end{align*}
    Therefore,
    \begin{align*}
        &\mb{E}[G(x_{k+1}) \mmid y_k]\\
        =& \frac{\mb{E}[G(y_k + \sqrt{\eta} z_1)^{1-1/\kappa} \mmid y_k]}{\mb{E}[G(y_k + \sqrt{\eta} z_1)^{-1/\kappa} \mmid y_k]}\\
        \leq& \mb{E}[G(y_k + \sqrt{\eta} z_1) \mmid y_k]^{1-1/\kappa}\mb{E}[G(y_k + \sqrt{\eta} z_1) \mmid y_k]^{1/\kappa} \qquad \text{(Jensen's Inequality)}\\
        =& \mb{E}[G(y_k + \sqrt{\eta} z_1) \mmid y_k].
    \end{align*}
    Recall $y_k = x_k + \sqrt{\eta} z_2$ where $z_2 \sim \mathcal{N}(0,I_d)$ is independent from $x_k$. By the towering property of conditional expectation,
    \begin{align*}
        \mb{E}[G(x_{k+1})] &\leq \mb{E}[G(x_k + \sqrt{\eta} z_1 + \sqrt{\eta} z_2)]\\
        &= \mb{E}[G(x_k + \sqrt{2\eta}z)],
    \end{align*}
    where $z \sim \mathcal{N}(0,I_d)$ is independent from $x_k$, which completes the proof.
\end{proof}

In order to provide a more refined control over $\mb{E}[G(x_k)]$, we need additional assumptions on $V$. In particular, when considering the generalized Cauchy density, we arrive at the following lemma.
\begin{lemma}\label{lem:proximal_cauchy_moment_growth}
    Suppose $(x_k,y_k)_k$ are the iterates of the Gaussian proximal sampler with step size $\eta$ and target density $\pi^X \propto \exp(-V)$ satisfies
    $$\vert \nabla V(x)\vert \leq \frac{(d+\nu_2)\vert x\vert}{1 + \vert x\vert^2} \quad \text{and} \quad \Delta V(x)  \leq \frac{(d + \nu_2)^2}{1 + \vert x \vert^2},$$
    for all $x \in \mb{R}^d$. Let $G(x) = \exp(\kappa V(x))$ with $\kappa \geq 1 \lor \frac{2}{d+\nu_2}$. Then, for every $k \geq 0$,
    \begin{align*}
        \mb{E}[G(x_{k+1})]^{\frac{2}{\kappa(d+\nu_2)}} \leq \mb{E}[G(x_k)]^{\frac{2}{\kappa(d+\nu_2)}} + 4\kappa\eta k(d+\nu_2).
    \end{align*}
\end{lemma}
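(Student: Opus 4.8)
The plan is to combine Lemma~\ref{lem:prox_g_rec}, which reduces the one-step growth of $\mb{E}[G(x_k)]$ to an evolution under the (Brownian) heat flow, with a differential inequality for that flow coming from the pointwise bounds on $\nabla V$ and $\Delta V$. Throughout, normalize $V(0)=0$, which is harmless since the hypotheses involve only derivatives of $V$; integrating the gradient bound along rays then gives the a priori estimate $V(x)\le\tfrac{d+\nu_2}{2}\ln(1+|x|^2)$, so that $G(x)=e^{\kappa V(x)}\le(1+|x|^2)^{\kappa(d+\nu_2)/2}$ has at most polynomial growth.

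First I would apply Lemma~\ref{lem:prox_g_rec} to get $\mb{E}[G(x_{k+1})]\le \mb{E}[G(x_k+\sqrt{2\eta}\,z)]$ with $z\sim\mc{N}(0,I_d)$ independent of $x_k$, and define, for $t\ge 0$,
\[
    h(t)\coloneqq \mb{E}\big[G(x_k+\sqrt{2t}\,z)\big]=\mb{E}\big[(Q_tG)(x_k)\big],
\]
where $Q_t=e^{t\Delta}$ is the heat semigroup (the $\sqrt{2t}$ scaling matches the generator $\Delta$, not $\tfrac12\Delta$). By the polynomial growth of $G$, $h$ is finite and smooth on $t>0$, with $h(0)=\mb{E}[G(x_k)]$, $h(\eta)\ge\mb{E}[G(x_{k+1})]$, and $h'(t)=\mb{E}[(\Delta G)(x_k+\sqrt{2t}\,z)]$ after differentiating under the expectation.

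Next I would bound $\Delta G$ pointwise. Since $\Delta G=\kappa G(\Delta V+\kappa|\nabla V|^2)$, the hypotheses $\Delta V\le\tfrac{(d+\nu_2)^2}{1+|x|^2}$ and $|\nabla V|^2\le\tfrac{(d+\nu_2)^2|x|^2}{(1+|x|^2)^2}\le\tfrac{(d+\nu_2)^2}{1+|x|^2}$ give, using $\kappa\ge1$,
\[
    \Delta G\le \kappa(1+\kappa)(d+\nu_2)^2\,\frac{G}{1+|x|^2}\le 2\kappa^2(d+\nu_2)^2\,\frac{G}{1+|x|^2}.
\]
The crucial step is to convert $\tfrac{G}{1+|x|^2}$ into a power of $G$: from $V(x)\le\tfrac{d+\nu_2}{2}\ln(1+|x|^2)$ we get $1+|x|^2\ge G(x)^{2/(\kappa(d+\nu_2))}$, so with $p\coloneqq\tfrac{2}{\kappa(d+\nu_2)}\in(0,1]$ (here the assumption $\kappa\ge\tfrac{2}{d+\nu_2}$ is used, to ensure $p\le1$),
\[
    \Delta G\le 2\kappa^2(d+\nu_2)^2\,G^{\,1-p}.
\]
Plugging this into the formula for $h'(t)$ and applying Jensen's inequality for the concave map $u\mapsto u^{1-p}$ yields $h'(t)\le 2\kappa^2(d+\nu_2)^2\,h(t)^{1-p}$. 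Integrating the resulting Bernoulli inequality, namely $\tfrac{d}{dt}h(t)^p=p\,h(t)^{p-1}h'(t)\le 2p\kappa^2(d+\nu_2)^2=4\kappa(d+\nu_2)$ since $p\kappa=\tfrac{2}{d+\nu_2}$, over $[0,\eta]$ gives $h(\eta)^p\le h(0)^p+4\kappa\eta(d+\nu_2)$, that is $\mb{E}[G(x_{k+1})]^{p}\le\mb{E}[G(x_k)]^{p}+4\kappa\eta(d+\nu_2)$; summing this over the first $k$ iterates gives the moment bound $\mb{E}[G(x_k)]^{p}\le\mb{E}[G(x_0)]^{p}+4\kappa\eta k(d+\nu_2)$, which is the estimate in the form actually used in the proof of Theorem~\ref{thm:proximal_cauchy_lowerbound} and mirrors Lemma~\ref{lem:LD_cauchy_moment_growth}.

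I expect the main obstacle to be the conversion $\tfrac{G}{1+|x|^2}\le G^{1-p}$: unlike Lemma~\ref{lem:LD_cauchy_moment_growth}, where $G$ is built from the explicit surrogate $\tilde V=\tfrac{d+\nu_2}{2}\ln(1+|x|^2)$ for which this identity is exact, here $G=e^{\kappa V}$ uses the true potential, so one must extract the logarithmic growth of $V$ from the gradient hypothesis and be mindful of the normalization of $V$ (an additive constant $V(0)=c$ introduces a factor $e^{2c/(d+\nu_2)}$ in the increment). A secondary technical point is the smoothness of $h$ and the interchange of $\tfrac{d}{dt}$ with $\mb{E}$, which follows from the polynomial growth of $G$ and $\Delta G$ together with finiteness of low-order polynomial moments of the iterates.
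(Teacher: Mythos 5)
Your argument is correct and follows essentially the same route as the paper's proof: Lemma~\ref{lem:prox_g_rec}, a generator computation along the heat flow, the pointwise bounds on $\nabla V$ and $\Delta V$, Jensen's inequality for the concave power, and integration of the resulting Bernoulli-type differential inequality (your parametrization with generator $\Delta$ up to time $\eta$ is equivalent to the paper's $\tfrac12\Delta$ up to time $2\eta$). You additionally make explicit the step $G/(1+|x|^2)\le G^{1-2/(\kappa(d+\nu_2))}$, which requires $V(x)\le\tfrac{d+\nu_2}{2}\ln(1+|x|^2)$ obtained from the gradient bound with the normalization $V(0)=0$ --- a point the paper's proof uses implicitly.
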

\begin{proof}
    From Lemma~\ref{lem:prox_g_rec}, we have
    $$\mb{E}[G(x_{k+1})] \leq \mb{E}[G(x_k + \sqrt{2\eta}z)],$$
    where $z \sim \mc{N}(0,I_d)$ is independent from $x_k$. Consider the Brownian motion starting at $x_k$, denoted by $Z_t = B_t + x_k$ where $(B_t)$ is a standard Brownian motion in $\mb{R}^d$. Notice that the generator for the process $\dee Z_t = \dee B_t$ is $\mc{L} = \frac{1}{2}\Delta$. Therefore,
    \begin{align*}
        \frac{\dee \mb{E}[G(Z_t)]}{\dee t} &= \mb{E}[\mc{L}G(Z_t)]\\
        &= \frac{\kappa}{2}\mb{E}\big[G(Z_t)\big(\kappa \vert\nabla V\vert^2 + \Delta V\big)\big]\\
        &\leq \frac{\kappa(\kappa + 1)}{2}\mb{E}\Big[G(Z_t)\frac{(d+\nu_2)^2}{1 + \vert Z_t\vert^2}\Big]\\
        &\leq 2\kappa^2(d+\nu_2)^2\mb{E}\big[G(Z_t)^{1-\tfrac{2}{\kappa(d+\nu_2)}}\big]\\
        &\leq 2\kappa^2(d+\nu_2)^2\mb{E}[G(Z_t)]^{1-\tfrac{2}{\kappa(d+\nu_2)}} \qquad \text{(Jensen's Inequality)}.
    \end{align*}
    Integrating the above inequality yields
    $$\mb{E}[G(Z_t)]^{\frac{2}{\kappa(d+\nu_2)}} \leq \mb{E}[G(Z_0)]^{\frac{2}{\kappa(d+\nu_2)}} + 2\kappa(d+\nu_2)t.$$
    The proof is complete by noticing that $Z_0 = x_k$ and $Z_t = x_k + \sqrt{2\eta}z$ for $t = 2\eta$.
\end{proof}

\begin{proof}[Proof of Theorem~\ref{thm:proximal_cauchy_lowerbound}]
    Notice that the statements of Lemmas~\ref{lem:LD_cauchy_moment_growth} and~\ref{lem:proximal_cauchy_moment_growth} are virtually the same by changing $t$ to $2k\eta$. Using this fact, the rest of the proof follows exactly the same as the proof of Theorem~\ref{thm:LD_cauchy_lowerbound}.
\end{proof}


\section{Proofs for the Stable Proximal Sampler}\label{app:proof proximal upper}

\subsection{Preliminaries}\label{appen:prelimilaries} In this section, we introduce additional preliminaries on the isotropic $\alpha$-stable process, the fractional Poincar\'{e}-type inequalities, the fractional Laplacian and the fractional heat flow.

The L\'{e}vy process is a stochastic process that is stochastically continuous with independent and stationary increments. Due to the stochastic continuity, the L\'{e}vy processes have càdlàg trajectories, which allows jumps in the paths. A L\'{e}vy process $Y_t$ is uniquely determined by a triple $(b,A, \nu)$ through the following L\'{e}vy-Khinchine formula: for all $t\ge 0$ and $\xi\in \mb{R}^d$,
\begin{align}\label{eq:LK formula}
    \mb{E}\big[ e^{i\langle \xi, Y_t \rangle} \big]=\exp\bigg( t\big( i\langle b, \xi \rangle- \xi^\intercal A \xi +\int_{\mb{R}^d\setminus\{0\}} ( e^{i\langle \xi, y \rangle}-1-i\langle \xi, y \rangle 1_{\{|y|\le 1\}} (y) ) \nu(\dee y) \big)  \bigg),
\end{align}
where $b\in \m{R}^d$ is a drift vector. $A\in \mb{R}^{d\times d}$ is the covariance matrix of the Brownian motion in the L\'{e}vy-It\^{o} decomposition\cite[Thereom 2.4.16]{applebaum2009levy} and $\nu$ is the L\'{e}vy measure related to the jump parts in the L\'{e}vy-It\^{o} decomposition.

The rotationally invariant(isotropic) stable process is a special case for the Le\'{v}y process when $b=0$, $A=0$ and $\nu$ is the measure given by
\begin{align}\label{eq:jump measure}
    \nu(\dee y)=c_{d,\alpha} |y|^{-(d+\alpha)} , \quad c_{d,\alpha}=2^\alpha\Gamma((d+\alpha)/2)/(\pi^{d/2}|\Gamma(-\alpha/2)|).
 \end{align}
Based on the L\'{e}vy-Khinchine formula \eqref{eq:LK formula}, if we initialize the process at $x\in \mb{R}^d$, its characteristic function is given by 
\begin{align}\label{eq:characeristic alpha stable}
    \mb{E}_x e^{i\langle \xi, \Xstable-x \rangle}=e^{-t |\xi|^\alpha}, \qquad x,\xi \in\mb{R}^d,\ t\ge 0.
\end{align} 
The index of stability $\alpha\in (0,2]$ determines the tail-heaviness of the densities: the smaller is $\alpha$, the heavier is the tail. The parameter $t$ in \eqref{eq:characeristic alpha stable} measures the spread of $X_t$ around the center. When $\alpha=2$, the stable process pertains to the Brownian motion running with a time clock twice as fast as the standard one and hence it has continuous paths. When $\alpha\in (0,2)$, the stable process paths contain discontinuities, which are often referred as jumps. At each fixed time, unlike the Brownian motion, the $\alpha$-stable process density only has a finite $p$\textsuperscript{th}-moment for $p<\alpha$, i.e.
\begin{equation*}
 \mb{E}[|X_1^{(\alpha)}|^p] =  \left\{
 \begin{aligned}
     &+\infty \qquad & p\in [\alpha,+\infty), \alpha\in (0,2), \\
     & m_p^{(\alpha)}<+\infty & p\in (0,\alpha), \alpha\in (0,2).
 \end{aligned}
 \right.
\end{equation*} 
When $d=1$, the fractional absolute moment formula for $m_p^{(\alpha)}$ can be derived explicitly, see~\cite[Chapter 3.7]{nolan2020univariate}. When $d> 1$, the explicit formula for $m_p^{(\alpha)}$ is only known in some special cases. For example, when $\alpha=1$, $m_p^{(1)}=\frac{\Gamma((d+p)/2)\Gamma((1-p)/2)}{\Gamma(d/2)\Gamma(1/2)}$ for all $p<1$. Another good property of $\alpha$-stable process is the self-similarity. By examining the characteristic functions, it is easy to verify that the isotropic $\alpha$-stable process is self-similar with the Hurst index $1/\alpha$, i.e. $X^{(\alpha)}_{at}$ and $a^{1/\alpha} X_t^{(\alpha)} $ have the same distribution. Or equivalently, $p_t^{(\alpha)}(x)=t^{-\frac{d}{\alpha}}p_1^{(\alpha)}(t^{-\frac{1}{\alpha}}x)$ for all $x\in \mb{R}^d$ and $t> 0$.

 The fractional Laplacian operator in $\mb{R}^d$ of order $\alpha$ is denoted by $-(-\Delta)^{\alpha/2}$ for $\alpha\in (0,2]$. It was introduced as a non-local generalization of the Laplacian operator to model various physical phenomenons. In \cite{kwasnicki2017ten}, ten equivalent definitions of the fractional Laplacian operator are introduced. Here we recall two of them:
\begin{enumerate}
    \item [(a)] Distributional definition: For all Schwartz functions $\phi$ defined on $\mb{R}^d$, we have 
    $$
    \int_{\mb{R}^d} -(-\Delta)^{\alpha/2} f(y)\phi(y) \dee y=\int_{\mb{R}^d} f(x)\left(-(-\Delta)^{\alpha/2}\phi(x)\right)dx.
    $$ 
    \item [(b)] Singular integral definition: For a limit in the space $L^p(\mb{R}^d)$, $p\in [1,\infty)$, we have
    \[
    -(-\Delta)^{\alpha/2} f(x)=\lim_{r\to 0^+} \frac{2^\alpha \Gamma(\frac{d+\alpha}{2})}{\pi^{d/2}|\Gamma(-\frac{\alpha}{2})|}\int_{\mb{R}^d\setminus B_r }\frac{f(x+z)-f(x)}{|z|^{d+\alpha}} \dee z.
    \]
    where $B_r$ is the unit ball with radius $r$ centered at the origin.
\end{enumerate}

The fractional Laplacian can be understood as the infinitesimal generator of the stable Le\'{v}y process. More explicitly, the semigroup defined by the transition probability $p_t^{(\alpha)}$ in \eqref{eq:transition density alpha stable} has the infinitesimal generator $-(-\Delta)^{\alpha/2}$, i.e. the density function $p_t^{(\alpha)}$ satisfies the following equation in the sense of distribution,~\cite{bogdan2008time}:   
\begin{align}\label{eq:fractional heat flow def}
    \partial_t p_t^{(\alpha)}(x)= -(-\Delta)^{\alpha/2} p_t^{(\alpha)}(x).
\end{align}
\eqref{eq:fractional heat flow def} is usually referred as the $\alpha$-fractional heat flow. When $\alpha=2$, $-(-\Delta)^{\alpha/2}$ is the Laplacian operator and $\eqref{eq:fractional heat flow def}$ becomes the heat flow.

\begin{proposition}[From FPI to PI] \label{prop:fpitopi}
When $\vartheta\to 2^-$, the $\vartheta$-FPI reduces to the classical Poincar\'{e} inequality with Dirichlet form $\mc{E}_\mu(\phi)=\int |\nabla\phi(x)|^2 \dee x$ for any smooth bounded $\phi:\mb{R}^d\to \mb{R}^d$.  
\end{proposition}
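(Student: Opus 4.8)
The plan is to show that the non-local Dirichlet form $\mc{E}^{(\vartheta)}_{\mu}$ converges, as $\vartheta \to 2^-$, to the classical (weighted) Dirichlet form $\int |\nabla \phi(x)|^2 \mu(x)\dee x$ pointwise on smooth compactly supported test functions, and that the normalizing constant $c_{d,\vartheta}$ is exactly the factor needed to make the limit of the singular integral reproduce the gradient energy. The key identity to exploit is that $c_{d,\vartheta}\iint_{\{x\neq y\}}\frac{(\phi(x)-\phi(y))^2}{|x-y|^{d+\vartheta}}\dee x\,\mu(y)\dee y$ is, up to the outer $\mu(y)\dee y$ integration, the Gagliardo seminorm $[\phi]_{W^{\vartheta/2,2}}^2$ weighted by the constant $c_{d,\vartheta}$; it is a classical fact (see e.g. Bourgain–Brezis–Mironescu, or the survey by Di Nezza–Palatucci–Valdinoci) that for $\phi \in C_c^2(\mb{R}^d)$ one has $\lim_{s\to 1^-}(1-s)\iint \frac{|\phi(x)-\phi(y)|^2}{|x-y|^{d+2s}}\dee x\dee y = K_d \int |\nabla\phi|^2\dee x$ for an explicit dimensional constant $K_d$, and that $c_{d,\vartheta}$ (with $\vartheta = 2s$) behaves like $(1-s)$ times the reciprocal of that constant as $\vartheta\to 2^-$. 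Matching the two gives the stated conclusion.

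First I would fix $\phi \in C_c^2(\mb{R}^d)$ and write, using the change of variables $z = x-y$,
\[
\mc{E}^{(\vartheta)}_{\mu}(\phi) = c_{d,\vartheta}\int_{\mb{R}^d}\Bigl(\int_{\mb{R}^d\setminus\{0\}} \frac{(\phi(y+z)-\phi(y))^2}{|z|^{d+\vartheta}}\dee z\Bigr)\mu(y)\dee y.
\]
For fixed $y$, I would split the inner integral into $|z|\le 1$ and $|z|>1$. On $|z|>1$ the integrand is bounded by $4\|\phi\|_\infty^2 |z|^{-d-\vartheta}$, whose integral is $O(1)$ uniformly in $\vartheta \in (1,2)$; since $c_{d,\vartheta}\to 0$ as $\vartheta\to 2^-$ (because $|\Gamma(-\vartheta/2)| \to \infty$), this contribution vanishes in the limit. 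On $|z|\le 1$ I would Taylor-expand $\phi(y+z)-\phi(y) = \nabla\phi(y)\cdot z + \tfrac12 z^\top \nabla^2\phi(y+\theta z) z$; squaring and using oddness of $z_iz_j z_k$-type terms under $z\mapsto -z$, the leading term is $(\nabla\phi(y)\cdot z)^2$, which integrates over directions to $\tfrac{|\nabla\phi(y)|^2}{d}\int_{|z|\le 1}|z|^2|z|^{-d-\vartheta}\dee z = \frac{|\nabla\phi(y)|^2}{d}\cdot\frac{\omega_{d-1}}{2-\vartheta}$ (with $\omega_{d-1}$ the surface area of $S^{d-1}$), and the remainder is $O\bigl(\int_{|z|\le1}|z|^3|z|^{-d-\vartheta}\dee z\bigr) = O((3-\vartheta)^{-1}) = O(1)$, again killed by $c_{d,\vartheta}$. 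Then I would compute $\lim_{\vartheta\to 2^-}\frac{c_{d,\vartheta}}{2-\vartheta}\cdot\frac{\omega_{d-1}}{d}$ using $\Gamma(-\vartheta/2) = \frac{\Gamma(1-\vartheta/2)}{-\vartheta/2}$ and $\Gamma(1-\vartheta/2)\sim \frac{2}{2-\vartheta}$ as $\vartheta\to 2^-$, together with $\omega_{d-1} = \frac{2\pi^{d/2}}{\Gamma(d/2)}$, to verify that this constant equals exactly $1$, so that $\mc{E}^{(\vartheta)}_\mu(\phi)\to \int |\nabla\phi(y)|^2\mu(y)\dee y$. I would also justify passing the $\vartheta$-limit inside the outer $\mu(y)\dee y$ integral via dominated convergence, using that $|\nabla\phi|^2$ is bounded and compactly supported and that the error terms are uniformly $O(1)$ in $\vartheta$ on $\mathrm{supp}\,\phi$ while $c_{d,\vartheta}\to 0$.

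The main obstacle I anticipate is making the constant bookkeeping fully rigorous and uniform: one must control the Taylor remainder and the $|z|>1$ tail \emph{uniformly in $y$} over the (compact) support of $\phi$, and carefully track the asymptotics of $c_{d,\vartheta}/(2-\vartheta)$ through the Gamma-function identities so that the dimensional constants cancel to give precisely the unweighted gradient energy rather than a dimensional multiple of it. A secondary subtlety is the statement's phrasing about ``any smooth bounded $\phi$'': strictly, the pointwise convergence argument as sketched works cleanly for $\phi\in C_c^2$; extending to general smooth bounded $\phi$ (or to $\phi$ in the form domain) requires either a density argument or additional decay hypotheses on $\phi$ and $\mu$ to ensure the non-local form is finite, and I would restrict the statement (or add a footnote) accordingly. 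Once these points are handled, combining the inner-integral limit with dominated convergence on the outer integral completes the proof.
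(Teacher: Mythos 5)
Your proposal follows essentially the same route as the paper's proof: change of variables $z=x-y$, splitting at the unit ball, killing the far-field term via $\|\phi\|_\infty$ and $c_{d,\vartheta}\to 0$, Taylor-expanding the near-field term with an $O(|z|^3)$ remainder, and computing the angular average of $\langle\nabla\phi(y),z\rangle^2$ together with the Gamma-function asymptotics of $c_{d,\vartheta}$. The one substantive slip is your claimed limiting constant: since $|\Gamma(-\vartheta/2)|\sim \tfrac{2}{2-\vartheta}$, one gets $\tfrac{c_{d,\vartheta}}{2-\vartheta}\cdot\tfrac{\omega_{d-1}}{d}\to 2$, not $1$, so $\mc{E}^{(\vartheta)}_\mu(\phi)\to 2\int|\nabla\phi|^2\dee\mu$ (as the paper finds); this only rescales the Poincar\'e constant and does not affect the conclusion. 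Your caveats about the Hessian being evaluated at $y+\theta z$ and about restricting to $C_c^2$ versus general smooth bounded $\phi$ are reasonable and handled implicitly in the paper via uniform $C^1/C^2$ bounds.
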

\begin{proof} It suffices to prove that $\mc{E}_\mu^{(\vartheta)}(\phi)$ converges to $\mc{E}_\mu(\phi)$ as $\vartheta\to 2^-$ for any smooth function $\phi$. Recall the definition of $\mc{E}_\mu^{(\vartheta)}(\phi)$:
$$
\mc{E}^{(\vartheta)}_{\mu}(\phi):=c_{d,\vartheta}\!\iint_{\{x\neq y\}} \!\!\!\!\frac{(\phi(x)-\phi(y))^2}{|x-y|^{(d+\vartheta)}} \dee x \mu(y)\dee y\quad \text{ with }\quad c_{d,\vartheta}=\frac{2^\vartheta\Gamma((d+\vartheta)/2)}{\pi^{d/2}|\Gamma(-\vartheta/2)|},
$$
where $c_{d,\vartheta}=\mc{O}(2-\vartheta)$ as $\vartheta\to 2^-$. Now we rewrite the inside integral in $\mc{E}^{(\vartheta)}_{\mu}(\phi)$ and split the integral region into a centered unit ball, denoted as $B_1$, and its complement:
\begin{align*}
    \int_{x\neq y} \frac{(\phi(x)-\phi(y))^2}{|x-y|^{(d+\vartheta)}} \dee x &= \int_{z\neq 0} \frac{(\phi(y+z)-\phi(y))^2}{|z|^{(d+\vartheta)}} \dee z\\
    &=\underbrace{\int_{B_1} \frac{(\phi(y+z)-\phi(y))^2}{|z|^{(d+\vartheta)}} \dee z}_{I_1} + \underbrace{\int_{\mb{R}^d\setminus B_1} \frac{(\phi(y+z)-\phi(y))^2}{|z|^{(d+\vartheta)}} \dee z}_{I_2} .
\end{align*}
For $I_2$, we have
\begin{align*}
    I_2 & \le 4\lv \phi \rv_{\infty}^2 \int_{\mb{R}^d\setminus B_1} \frac{1}{|z|^{d+\vartheta}}\dee z = \frac{4\lv \phi \rv_{\infty}^2d\pi^{\frac{d}{2}}}{\Gamma(\frac{d}{2}+1)}  \int_1^\infty r^{-\vartheta+1} \dee r =\frac{4\lv \phi \rv_{\infty}^2d\pi^{\frac{d}{2}}}{\vartheta\Gamma(\frac{d}{2}+1)}.
\end{align*}
As a result, the term in $\mc{E}^{(\vartheta)}_{\mu}(\phi)$ that is induced by $I_2$ satisfies
\begin{align*}
    c_{d,\vartheta}\int_{\mb{R}^d} I_2 \mu(y)\dee y & \le c_{d,\vartheta} \frac{4\lv \phi \rv_{\infty}^2d\pi^{\frac{d}{2}}}{\vartheta\Gamma(\frac{d}{2}+1)} \to 0 \quad \text{as }\vartheta\to 2^-.
\end{align*}
For $I_1$, we have when $\vartheta>1$,
\begin{align*}
    &~~~~~I_1 - \int_{B_1} \frac{|\langle \nabla\phi(y),z \rangle|^2}{|z|^{d+\vartheta}} \dee z \\
    & = \int_{B_1} \frac{\big( \phi(y+z)-\phi(y)-\langle\nabla \phi(y),z \rangle\big)\big( \phi(y+z)-\phi(y)+\langle\nabla \phi(y),z \rangle\big)}{|z|^{d+\vartheta}} \dee z \\
    &\le \lv \phi \rv_{C^2(\mb{R}^d)}\lv \phi \rv_{C^1(\mb{R}^d)} \int_{B_1} |z|^{-(d+\vartheta-3)} \dee z \\
    &= \lv \phi \rv_{C^2(\mb{R}^d)}\lv \phi \rv_{C^1(\mb{R}^d)} \frac{d\pi^{\frac{d}{2}}}{\Gamma(\frac{d}{2}+1)}  \int_0^1 r^{\vartheta-2} \dee r \\
    &= \lv \phi \rv_{C^2(\mb{R}^d)}\lv \phi \rv_{C^1(\mb{R}^d)} \frac{d\pi^{\frac{d}{2}}}{(\vartheta-1)\Gamma(\frac{d}{2}+1)},
\end{align*}
where $\lv \phi \rv_{C^i(\mb{R}^d)}:=\sup_{x\in \mb{R}^d} |\phi^{(i)}(x)|$ for $i=1,2$. As a result, the term in $\mc{E}^{(\vartheta)}_{\mu}(\phi)$ that is induced by $I_1$ satisfies
\begin{align*}
    c_{d,\vartheta}\int_{\mb{R}^d} \big(I_2  - \int_{B_1} \frac{|\langle \nabla\phi(y),z \rangle|^2}{|z|^{d+\vartheta}} \dee z \big) \mu(y)\dee y & \le c_{d,\vartheta}  \frac{\lv \phi \rv_{C^2(\mb{R}^d)}\lv \phi \rv_{C^1(\mb{R}^d)}d\pi^{\frac{d}{2}}}{(\vartheta-1)\Gamma(\frac{d}{2}+1)}\to 0 \quad \text{as }\vartheta\to 2^-.
\end{align*}
Therefore we have $\mc{E}^{(\vartheta)}_{\mu}(\phi)\to c_{d,\vartheta}\int_{\mb{R}^d}\int_{B_1} \frac{|\langle \nabla\phi(y),z \rangle|^2}{|z|^{d+\vartheta}} \mu(y)\dee z\dee y$ as $\vartheta\to 2^-$. Last, we prove the limit is equivalent to $2\mc{E}_\mu(\phi)$.
For $i\neq j$, we have
\begin{align*}
    \int_{B_1} \partial_i \phi(y)\partial_j \phi(y) z_i z_j \dee z = -\int_{B_1} \partial_i \phi(y)\partial_j \phi(y) \Tilde{z}_i \Tilde{z}_j \dee \Tilde{z},
\end{align*}
where $\Tilde{z}_k=z_k$ for all $k\neq j$ and $\Tilde{z}_j=-z_j$. Therefore, $\int_{B_1} \partial_i \phi(y)\partial_j \phi(y) z_i z_j \dee z=0$. As a result,
\begin{align*}
    \int_{B_1} \frac{|\langle \nabla\phi(y),z \rangle|^2}{|z|^{d+\vartheta}} \dee z & = \int_{B_1} \frac{\sum_{i=1}^d (\partial_i \phi(y))^2 z_i^2 }{|z|^{d+\vartheta}}\dee z \\
    &= \sum_{i=1}^d (\partial_i \phi(y))^2 \frac{1}{d} \int_{B_1} \frac{|z|^2}{|z|^{d+\vartheta}} \dee z \\
    & = |\nabla \phi(y)|^2 \frac{\pi^{\frac{d}{2}}}{(2-\vartheta)\Gamma(\frac{d}{2}+1)},
\end{align*}
and the proof follows from $c_{d,\vartheta}\frac{\pi^{\frac{d}{2}}}{(2-\vartheta)\Gamma(\frac{d}{2}+1)}\to 2$ as $\vartheta\to 2^-$.
\end{proof}


\subsection{\texorpdfstring{$\chi^2$}{chi2} convergence under FPI}\label{appen:convergence under FPI}In this section, we study the decaying property of $\chi^2$-divergence from $\rho_k^X$ to $\pi^X$, where $\rho_k^X$ is the law of $x_k$. In the following analysis, we denote $\rho_k=\rho_k^{X,Y}$ as the law of $(x_k,y_k)$, $\rho_k^Y$ the law of $y_k$. We will analyze the two steps in the stable proximal sampler separately. \\

\underline{\textbf{Step 1.}} In the following proposition, we study the decay of $\chi^2$-divergence in step 1.

\begin{proposition}\label{prop:step 1 decay FPI} Assume that $\pi^X$ satisfies the $\alpha$-FPI with parameter $\CFPIalpha$, then for each $k\ge 0$,
\begin{align*}
    \chi^2(\rho_k^Y|\pi^Y)\le \exp\left( -\eta \left( \CFPIalpha+\eta \right)^{-1} \right)\chi^2(\rho_k^X|\pi^X).
\end{align*}
\end{proposition}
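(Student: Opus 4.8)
The plan is to couple the two densities by running them simultaneously along the $\alpha$-fractional heat flow and tracking $\chi^2$ between them. Since the first step of Algorithm~\ref{alg:fractional proximal sampler} convolves the current law with the stable transition kernel, we have $\rho_k^Y = \rho_k^X \ast p^{(\alpha)}_\eta$ and $\pi^Y = \pi^X \ast p^{(\alpha)}_\eta$. Accordingly, for $t \in [0,\eta]$ define the interpolants $\mu_t \coloneqq \rho_k^X \ast p^{(\alpha)}_t$ and $\nu_t \coloneqq \pi^X \ast p^{(\alpha)}_t$, so that $\mu_0 = \rho_k^X$, $\nu_0 = \pi^X$, $\mu_\eta = \rho_k^Y$, $\nu_\eta = \pi^Y$, and both solve the fractional heat equation $\partial_t u = -(-\Delta)^{\alpha/2} u$. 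Write $\phi_t \coloneqq \mu_t / \nu_t$ and $\chi(t) \coloneqq \chi^2(\mu_t | \nu_t)$; since $\int \mu_t \,\dee x = 1$ we have $\mb{E}_{\nu_t}[\phi_t] = 1$, hence $\chi(t) = \Var_{\nu_t}(\phi_t)$. We may assume $\chi(0) < \infty$, as otherwise the bound holds trivially.

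The first main ingredient is the dissipation identity $\chi'(t) = -\mc{E}^{(\alpha)}_{\nu_t}(\phi_t)$. To derive it, differentiate under the integral and insert the fractional heat equation:
\[
\chi'(t) = \int \big( 2\phi_t\, \partial_t \mu_t - \phi_t^2\, \partial_t \nu_t \big)\, \dee x = -2 \int \phi_t\, (-\Delta)^{\alpha/2}\mu_t\, \dee x + \int \phi_t^2\, (-\Delta)^{\alpha/2}\nu_t\, \dee x .
\]
One then rewrites each term through the bilinear (singular-integral) form of the fractional Laplacian recalled in Appendix~\ref{appen:prelimilaries}, expands $\mu_t = \phi_t \nu_t$ using $\mu_t(x) - \mu_t(y) = \nu_t(x)\big(\phi_t(x) - \phi_t(y)\big) + \phi_t(y)\big(\nu_t(x) - \nu_t(y)\big)$, and symmetrizes in $x \leftrightarrow y$; the terms carrying $(-\Delta)^{\alpha/2}\nu_t$ then cancel and one is left with exactly $-\mc{E}^{(\alpha)}_{\nu_t}(\phi_t)$, where $\mc{E}^{(\alpha)}_{\nu_t}$ is the non-local Dirichlet form of \eqref{eq:Poincare fractional}. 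This is the non-local counterpart of the classical heat-flow identity $\frac{\dee}{\dee t}\chi^2(\mu_t | \nu_t) = -2\,\mb{E}_{\nu_t}[|\nabla \phi_t|^2]$; in fact the particular normalization of the Dirichlet form in \eqref{eq:Poincare fractional} is exactly the one that makes this identity hold without extra constants. Making these manipulations rigorous — justifying Fubini, the integration by parts against the principal-value kernel, and strict positivity of $\nu_t$ so that $\phi_t$ is well defined — is the main technical hurdle, and is handled via the smoothing and strict positivity of the stable semigroup.

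The second ingredient controls the FPI constant of $\nu_t$ along the flow. By the convolution property of the fractional Poincar\'e inequality (Lemma~\ref{lem:subadditivity of FPI}), the $\alpha$-FPI constant of $\nu_t = \pi^X \ast p^{(\alpha)}_t$ is at most $\CFPIalpha$ plus the $\alpha$-FPI constant of $p^{(\alpha)}_t$. The latter is at most $t$: the stable law $p^{(\alpha)}_1$ satisfies $\alpha$-FPI by \cite[Theorem 23]{chafai2004entropies}, and the self-similarity $p^{(\alpha)}_t(x) = t^{-d/\alpha} p^{(\alpha)}_1(t^{-1/\alpha}x)$ together with the scaling of the Dirichlet form shows that its $\alpha$-FPI constant scales linearly in $t$. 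Hence $\nu_t$ satisfies $\alpha$-FPI with constant at most $\CFPIalpha + t \le \CFPIalpha + \eta$ for every $t \in [0,\eta]$. Applying this inequality to $\phi_t$ gives $\mc{E}^{(\alpha)}_{\nu_t}(\phi_t) \ge \Var_{\nu_t}(\phi_t)/(\CFPIalpha + \eta) = \chi(t)/(\CFPIalpha + \eta)$, so the dissipation identity yields the differential inequality $\chi'(t) \le -\chi(t)/(\CFPIalpha + \eta)$ on $[0,\eta]$.

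Grönwall's inequality then gives $\chi(\eta) \le \chi(0)\,\exp\!\big(-\eta\,(\CFPIalpha + \eta)^{-1}\big)$, which upon substituting $\mu_0 = \rho_k^X$, $\nu_0 = \pi^X$, $\mu_\eta = \rho_k^Y$, $\nu_\eta = \pi^Y$ is precisely the claim. The step I expect to require the most care is the dissipation identity of the second paragraph: unlike the local case, where it is a one-line integration by parts, here one must work directly with the bilinear form of the fractional Laplacian and symmetrize carefully, and one must separately verify the integrability and regularity of $\mu_t$ and $\nu_t$ — which follow from the regularizing action of the $\alpha$-stable semigroup — in order for each manipulation to be legitimate.
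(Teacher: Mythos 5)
Your proposal is correct and follows essentially the same route as the paper's proof: view $\rho_k^Y$ and $\pi^Y$ as the time-$\eta$ fractional heat flow of $\rho_k^X$ and $\pi^X$, derive the dissipation identity $\frac{\dee}{\dee t}\chi^2 = -\mc{E}^{(\alpha)}_{\pi^X_t}(\rho^X_t/\pi^X_t)$ via the distributional and singular-integral forms of the fractional Laplacian, bound the FPI constant of $\pi^X_t$ by $\CFPIalpha+\eta$ using Lemma~\ref{lem:subadditivity of FPI} and \cite[Theorem 23]{chafai2004entropies}, and conclude by Gr\"onwall. The only cosmetic difference is that the paper computes the dissipation for a general $\phi$-divergence before specializing to $\phi(r)=(r-1)^2$, whereas you work with $\chi^2$ directly.
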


\begin{proof}[Proof of Proposition \ref{prop:step 1 decay FPI}] For the simplicity of notations, we will write $p^{(\alpha)}$ and $p_t^{(\alpha)}$ as $p$ and $p_t$ respectively in this proof.
Since $x_k\sim \rho_k^X$ and $y_k|x_k\sim p(\eta; x,\cdot)$, we have
\begin{align*}
    \rho_k^Y(y)=\int_{\mb{R}^d} p(\eta;x,y)\rho_k^X(x)\dee x=\int_{\mb{R}^d} \rho_k^X(x)p_{\eta}(y-x)\dee x=\rho_k^X \ast p_{\eta} (y).
\end{align*}
Therefore, we can view $\rho_k^Y$ as $\rho_k^X$ evolving along the following factional heat flow 
\begin{align*}
    \partial_t \Tilde{\rho}_t=-(-\Delta)^{\frac{\alpha}{2}} \Tilde{\rho}_t.
\end{align*}
That is if $\Tilde{\rho}_0=\rho_k^X$, then $\Tilde{\rho}_\eta=\rho_k^Y$. Similarly, since $\pi^Y=\pi^X\ast p_\eta$, if $\Tilde{\rho}_0=\pi^X$, then $\Tilde{\rho}_\eta=\pi^Y$. For any $t\in [0,\eta]$, define $\pi_t^X=\pi^X\ast p_t$ and $\rho_t^X=\rho_k^X\ast p_t$. The derivative of $\phi$-divergence from $\rho_t^X$ to $\pi_t^X$ can be calculated as
\small{
\begin{align*}
    &\frac{d}{dt} \int_{\mb{R}^d} \phi(\frac{\rho_t^X}{\pi_t^X}) \pi_t^X \dee x\\
    =&\int_{\mb{R}^d} \partial_t \pi_t^X \phi(\frac{\rho_t^X}{\pi_t^X})+\phi'(\frac{\rho_t^X}{\pi_t^X})\left( \partial_t \rho_t^X-\partial_t \pi_t^X \frac{\rho_t^X}{\pi_t^X} \right)\dee x \\
    =&-\int_{\mb{R}^d} \phi(\frac{\rho_t^X}{\pi_t^X})(-\Delta)^{\frac{\alpha}{2}}\pi_t^X \dee x+\int_{\mb{R}^d} \phi'(\frac{\rho_t^X}{\pi_t^X})\left( \frac{\rho_t^X}{\pi_t^X}(-\Delta)^{\frac{\alpha}{2}}\pi_t^X-(-\Delta)^{\frac{\alpha}{2}}\rho_t^X \right)\dee x \\
    =&\int_{\mb{R}^d} \left[ -\frac{\rho_t^X}{\pi_t^X}(-\Delta)^{\frac{\alpha}{2}}\phi'(\frac{\rho_t^X}{\pi_t^X})+(-\Delta)^{\frac{\alpha}{2}}\left( \frac{\rho_t^X}{\pi_t^X}\phi'(\frac{\rho_t^X}{\pi_t^X}) \right)-(-\Delta)^{\frac{\alpha}{2}}\phi(\frac{\rho_t^X}{\pi_t^X}) \right] \pi_t^X \dee x,
\end{align*}
}
where in the second identity we used the distributional definition of the fractional Laplacian. Next according to the singular integral definition of fractional Laplacian, we have 
\begin{align}\label{eq:definition fractional Laplacian}
    -(-\Delta)^{\frac{\alpha}{2}} f(x):=c_{d,\alpha} \lim_{r\to 0^+} \int_{\mb{R}^d \setminus B_r} \frac{f(x+z)-f(x)}{|z|^{d+\alpha}} \dee z,
\end{align}
where $B_r=\{x\in \mb{R}^d: |x|\le r\}$ and $c_{d,\alpha}$ is given in \eqref{eq:jump measure}. With \eqref{eq:definition fractional Laplacian}, we have
\small{
\begin{align*}
    &\quad \frac{d}{dt} \int_{\mb{R}^d} \phi(\frac{\rho_t^X}{\pi_t^X}) \pi_t^X \dee x\\
    &=c_{d,\alpha} \lim_{r\to 0^+} \int_{\mb{R}^d}\int_{\mb{R}^d\setminus B_r} \frac{\phi(\frac{\rho_t^X(x+z)}{\pi_t^X(x+z)})-\phi(\frac{\rho_t^X(x)}{\pi_t^X(x)})-\frac{\rho_t^X(x+z)}{\pi_t^X(x+z)}\phi'(\frac{\rho_t^X(x+z)}{\pi_t^X(x+z)})+\frac{\rho_t^X(x)}{\pi_t^X(x)}\phi'(\frac{\rho_t^X(x+z)}{\pi_t^X(x+z)})}{|z|^{d+\alpha}} \dee z \pi_t^X(x)\dee x.
\end{align*}
}
When $\phi(r)=(r-1)^2$, $\int_{\mb{R}^d} \phi(\frac{\rho_t^X}{\pi_t^X}) \pi_t^X \dee x=\chi^2(\rho_t^X|\pi_t^X)$ and we have
\begin{align*}
    \frac{d}{dt}\chi^2(\rho_t^X|\pi_t^X)=-c_{d,\alpha} \lim_{r\to 0^+} \int_{\mb{R}^d}\int_{\mb{R}^d\setminus B_r} \frac{\left( \frac{\rho_t^X(x+z)}{\pi_t^X(x+z)}-\frac{\rho_t^X(x)}{\pi_t^X(x)} \right)^2}{|z|^{d+\alpha}} \dee z \pi_t^X \dee x:=-\mc{E}_{\pi_t^X}(\frac{\rho_t^X}{\pi_t^X}).
\end{align*}

\noindent According to \cite[Theorem 23]{chafai2004entropies}, $p_t$ satisfies $\alpha$-FPI with parameter $t $ for all $t\in (0,\eta]$. Since $\pi^X$ also satisfies the $\alpha$-FPI with parameter $\CFPIalpha$, Lemma \ref{lem:subadditivity of FPI} implies that $\pi^X_t=\pi^X\ast p_t$ satisfies the $\alpha$-FPI with parameter $\CFPIalpha+\eta$ for all $t\in (0,\eta]$.
Therefore we have
\begin{align*}
    \frac{d}{dt}\chi^2(\rho_t^X|\pi_t^X)=-\mc{E}_{\pi_t^X}(\frac{\rho_t^X}{\pi_t^X})\le -\left( \CFPIalpha+\eta \right)^{-1}\chi^2(\rho_t^X|\pi_t^X).
\end{align*}
Last, according to Gronwall's inequality we have 
\begin{align*}
    \chi^2(\rho_k^Y|\pi^Y)&=\chi^2(\rho_\eta^X|\pi_\eta^X)\le \exp\left( -\eta \left( \CFPIalpha+\eta \right)^{-1}  \right)\chi^2(\rho_k^X|\pi^X).
\end{align*}
\end{proof}

\underline{\textbf{Step 2.}} In this step, we study the decay of $\chi^2$-divergence in step 2. building on the work by \cite{chen2022improved}. According to the R$\alpha$SO, we have $\rho^X_{k+1}(x)=\int_{\mb{R}^d} \pi^{X|Y}(x|y)\rho^Y_k(y)\dee y $. Also notice that $\pi^X(x)=\int_{\mb{R}^d}\pi^{X|Y}(x|y)\pi^Y(y)\dee y$. According to the data processing inequalities, $\chi^2$ divergence won't increase after step 2, i.e. $\chi^2(\rho^X_{k+1}|\pi^X)\le \chi^2(\rho^Y_k|\pi^Y)$.
\\

Combining our results in \textbf{Step 1} and \textbf{Step 2}, we prove Theorem \ref{thm:chi square convergence proximal sampler}.

\begin{lemma}\label{lem:subadditivity of FPI} Let $\mu_1,\mu_2$ be two probability densities satisfying the $\vartheta$-FPI with parameters $C_1,C_2$ respectively. Then $\mu_1 \ast \mu_2$ satisfies the $\vartheta$-FPI with parameter $C_1+C_2$. 
\end{lemma}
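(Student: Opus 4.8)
The plan is to adapt the classical argument that the standard Poincar\'e inequality tensorizes under convolution, based on the law of total variance. Let $X\sim\mu_1$ and $Z\sim\mu_2$ be independent, so $X+Z\sim\mu_1\ast\mu_2$. The structural observation that makes everything work is that the non-local Dirichlet form decouples into a ``local energy density'' integrated against the base measure:
\[
\mc{E}^{(\vartheta)}_{\mu}(\phi)=\int_{\mb{R}^d}(D\phi)(y)\,\mu(y)\,\dee y=\mathbb{E}_{Y\sim\mu}\big[(D\phi)(Y)\big],\qquad (D\phi)(y):=c_{d,\vartheta}\int_{\mb{R}^d}\frac{(\phi(y+z)-\phi(y))^2}{|z|^{d+\vartheta}}\,\dee z,
\]
which follows from the substitution $z=x-y$ in the inner integral. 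The point is that $D$ is translation-covariant: $D\big[\phi(\cdot+a)\big](y)=(D\phi)(y+a)$ for every $a\in\mb{R}^d$. In particular, for fixed $x$ the function $z\mapsto\phi(x+z)$ has $\mu_2$-Dirichlet energy $\mathbb{E}_{Z}[(D\phi)(x+Z)]$, and since $X+Z\sim\mu_1\ast\mu_2$ we have $\mc{E}^{(\vartheta)}_{\mu_1\ast\mu_2}(\phi)=\mathbb{E}_{X,Z}[(D\phi)(X+Z)]$.

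The computation then proceeds in two steps. First I would write, by conditioning on $X$,
\[
\Var_{\mu_1\ast\mu_2}(\phi)=\mathbb{E}_X\!\big[\Var_Z(\phi(X+Z)\mmid X)\big]+\Var_X\!\big(\mathbb{E}_Z[\phi(X+Z)\mmid X]\big).
\]
For the first term: for each fixed $x$, apply the $\vartheta$-FPI for $\mu_2$ to $z\mapsto\phi(x+z)$ to get $\Var_Z(\phi(x+Z))\le C_2\,\mathbb{E}_Z[(D\phi)(x+Z)]$; averaging over $X\sim\mu_1$ and using $X+Z\sim\mu_1\ast\mu_2$ bounds this term by $C_2\,\mc{E}^{(\vartheta)}_{\mu_1\ast\mu_2}(\phi)$. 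For the second term: set $g(x):=\mathbb{E}_Z[\phi(x+Z)]$ and apply the $\vartheta$-FPI for $\mu_1$, giving $\Var_X(g(X))\le C_1\,\mathbb{E}_X[(Dg)(X)]$; then note $g(x+z)-g(x)=\mathbb{E}_Z[\phi(x+z+Z)-\phi(x+Z)]$, so Jensen's inequality yields $(g(x+z)-g(x))^2\le\mathbb{E}_Z[(\phi(x+z+Z)-\phi(x+Z))^2]$, and integrating against $|z|^{-(d+\vartheta)}\,\dee z$ (with Tonelli) gives $(Dg)(x)\le\mathbb{E}_Z[(D\phi)(x+Z)]$. Averaging over $X$ then bounds the second term by $C_1\,\mc{E}^{(\vartheta)}_{\mu_1\ast\mu_2}(\phi)$ as well. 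Adding the two bounds proves $\Var_{\mu_1\ast\mu_2}(\phi)\le(C_1+C_2)\,\mc{E}^{(\vartheta)}_{\mu_1\ast\mu_2}(\phi)$, which is the claim.

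There is no serious obstacle here: the mathematical content is the translation-covariance of $D$ together with one application of Jensen, both immediate, plus the law of total variance. The only points requiring care are measure-theoretic bookkeeping: all interchanges of $\dee z$-integration with expectation over $Z$ are justified by Tonelli since the integrands are nonnegative, and one should check that the $\vartheta$-FPIs for $\mu_1$ and $\mu_2$ are applied to functions genuinely in their domains --- this holds for $g$ (because $\mathbb{E}_X[(Dg)(X)]\le\mc{E}^{(\vartheta)}_{\mu_1\ast\mu_2}(\phi)<\infty$) and for $z\mapsto\phi(x+z)$ at $\mu_1$-almost every $x$ whenever $\phi$ is in the domain of $\mc{E}^{(\vartheta)}_{\mu_1\ast\mu_2}$, the desired inequality being trivial on the remaining null set.
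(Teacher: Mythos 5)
Your proof is correct and follows essentially the same route as the paper's: a law-of-total-variance decomposition, the FPI of one factor applied to the conditional variance and of the other to the variance of the conditional mean (you condition on $X\sim\mu_1$ where the paper conditions on $Y\sim\mu_2$, which is immaterial by symmetry of the bound $C_1+C_2$), Jensen's inequality for the conditional-mean term, and the observation that the resulting triple integral is exactly $\mc{E}^{(\vartheta)}_{\mu_1\ast\mu_2}(\phi)$ by the translation structure of the Dirichlet form. Your added care about Tonelli and domains is a mild refinement of the paper's argument, not a different one.
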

\begin{proof}[Proof of Lemma \ref{lem:subadditivity of FPI}] Let $X,Y$ be two independent random variables such that $X\sim \mu_1$ and $Y\sim \mu_2$. Then $X+Y\sim \mu_1\ast \mu_2$. According to variance decomposition, we have for any function $\phi$,
\begin{align*}
    \Var_{\mu_1\ast \mu_2}\left(\phi\right)&=\Var\left( \phi(X+Y) \right)=\mb{E}\left[ \Var\left(\phi(X+Y)| Y\right) \right]+\Var\left( \mb{E}\left[ \phi(X+Y)|Y \right] \right).
\end{align*}
Since $X\sim \mu_1$ and $\mu_1$ satisfies the $\vartheta$-FPI with parameter $C_1$, we have
\begin{align*}
    \Var\left(\phi(X+Y)| Y\right)\le C_1 c_{d,\alpha} \iint_{\{z\neq 0\}} \frac{\left(\phi(x+Y+z)-\phi(x+Y)\right)^2}{|z|^{(d+\vartheta)}} \dee z \mu_1(x)\dee x,
\end{align*}
therefore we have
\begin{align}\label{eq:variacen decomposition term 1}
\begin{aligned}
   &\mb{E}\left[ \Var\left(\phi(X+Y)| Y\right) \right]\\
   \le &  C_1 c_{d,\alpha} \iiint_{\{z\neq 0\}} \frac{\left(\phi(x+y+z)-\phi(x+y)\right)^2}{|z|^{(d+\vartheta)}} \dee z \mu_1(x)\dee x\mu_2(y)\dee y.
\end{aligned}
\end{align}
Since $Y\sim \mu_2$ and $\mu_2$ satisfies the $\vartheta$-FPI with parameter $C_2$, we have
\small{
\begin{align}\label{eq:variacen decomposition term 2}
   &\Var\left( \mb{E}\left[ \phi(X+Y)|Y \right] \right)\nonumber \\
   \le\, & C_2 c_{d,\alpha} \iint_{\{z\neq 0\}} \frac{\left( \int \phi(x+y+z)\mu_1(x)\dee x-\int \phi(x+y)\mu_1(x)\dee x \right)^2}{|z|^{(d+\vartheta)}} \dee z \mu_2(y)\dee y \nonumber\\
   \le\, & C_2 c_{d,\alpha} \iint_{\{z\neq 0\}} \int \frac{\left( \phi(x+y+z)- \phi(x+y) \right)^2}{|z|^{(d+\vartheta)}} \mu_1(x)\dee x \dee z \mu_2(y)\dee y
\end{align}
}
where the last inequality follows from Jensen's inequality. Combining \eqref{eq:variacen decomposition term 1} and \eqref{eq:variacen decomposition term 2}, we have
\begin{align*}
    \Var_{\mu_1\ast\mu_2}(\phi)&\le C_1 c_{d,\alpha} \iiint_{\{z\neq 0\}} \frac{\left(\phi(x+y+z)-\phi(x+y)\right)^2}{|z|^{(d+\vartheta)}} \dee z \mu_1(x)\dee x\mu_2(y)\dee y\\
    &\quad + C_2 c_{d,\alpha} \iint_{\{z\neq 0\}} \int \frac{\left( \phi(x+y+z)- \phi(x+y) \right)^2}{|z|^{(d+\vartheta)}} \mu_1(x)\dee x \dee z \mu_2(y)\dee y \\
    &\le \left(C_1+C_2\right)c_{d,\alpha} \iiint_{\{z\neq 0\}} \frac{\left(\phi(x+y+z)-\phi(x+y)\right)^2}{|z|^{(d+\vartheta)}} \dee z \mu_1(x)\dee x\mu_2(y)\dee y \\
    &=\left(C_1+C_2\right)c_{d,\alpha} \iint_{\{z\neq 0\}} \frac{\left(\phi(u+z)-\phi(u)\right)^2}{|z|^{(d+\vartheta)}} \dee z \mu_1\ast\mu_2(u)\dee u\\
    &=\left(C_1+C_2\right) \mc{E}_{\mu_1\ast\mu_2}(\phi),
\end{align*}
where the second inequality follows from Fatou's lemma.
\end{proof}

\subsection{Implementation of the Stable Proximal Sampler}\label{appen:RGO}

In this section we discuss the implementation of the R$\alpha$SO step in our stable proximal sampler. We introduce an exact implementation of the R$\alpha$SO step without optimizing the target potential and the proofs for Corollary \ref{cor:oracle complexity FPI} and Proposition \ref{prop:error accumulation}.\\

\textbf{\underline{Rejection sampling without optimization}}. Suppose a uniform lower bound of the target potential is known, i.e. there is a constant $\Clow$ such that $\inf_{x\in \mb{R}^d} V(x)\ge \Clow>-\infty$, R$\alpha$SO at each step can be implemented exactly via a rejection sampler with proposals $\tilde{x}_{k+1}$ following $p^{(\alpha)}_\eta(\cdot-y_k)$ and the acceptance probability $\exp(-V(\tilde{x}_{k+1})+\Clow)$. Then the expected number of rejections, $N$, satisfies
\begin{align*}
    N&=\big( \int_{\mb{R}^d} e^{-V(x)+\Clow} p(\eta;x,y_k) \dee x\big)^{-1}\quad \text{and}\quad \log N= -\Clow-\log \big( \int_{\mb{R}^d} e^{-V(x)} p^{(\alpha)}(\eta;x,y_k) \dee x \big ).
\end{align*}
Without loss of generality, we assume $x^*=0$, which always hold if we translate the potential $V$ by $V(0)$. Then we have
\begin{align*}
    \log N&\le -\Clow+ \int_{\mb{R}^d} \big(V(x)-V(0)\big) p^{(\alpha)}(\eta;x,y_k) \dee x\\
    &\le-\Clow + L \int_{\mb{R}^d} \big| x+y_k \big|^\beta p^{(\alpha)}_\eta(x) \dee x\\
    &\le -\Clow+ L\mb{E}_{X\sim \pi^X}[|X|^\beta] + L\eta^\beta d^{\frac{\beta}{2}} + L\mb{E}_{X\sim \pi^X}[|X|^{2\beta}]^{\frac{1}{2}} \chi^2(\rho_0^X|\pi^X)^{\frac{1}{2}}+\frac{\Gamma(\frac{d+1}{2})\Gamma(\frac{1-\beta}{2})L}{\Gamma(\frac{d+1-\beta}{2})\pi^{\frac{1}{2}}} \eta^\beta,
\end{align*}
where the second inequality follows from Assumption \ref{ass:smoothness V} and the last inequality follows from the proof of Corollary \ref{cor:oracle complexity FPI}. With the above estimation, we can pick $\eta=\Theta(\Clow^{\frac{1}{\beta}}d^{-\frac{1}{2}}L^{-\frac{1}{\beta}})$ and the expected number of rejections satisfies $\log N=\mathcal{O}(\Clow+LM)$ with $M=\mb{E}_{\pi^X}[|X|^\beta]+\chi^2(\rho_0^X|\pi^X)\mb{E}_{\pi^X}[|X|^{2\beta}]^{\frac{1}{2}}$.\\

\begin{proof}[Proof of Corollary \ref{cor:oracle complexity FPI}]
   The expected number of iterations conditioned on $y_k$ in the rejection sampling is 
\begin{align*}
    N&=\bigg( \int_{\mb{R}^d} e^{-V(x)+V(x^*)} p^{(\alpha)}(\eta;x,y_k) \dee x\bigg)^{-1}\\
     \text{and}\quad \log N&= -V(x^*)-\log \big( \int_{\mb{R}^d} e^{-V(x)} p^{(\alpha)}(\eta;x,y_k) \dee x \big )\\
    &\le \int_{\mb{R}^d} \big(V(x)-V(x^*)\big) p^{(\alpha)}(\eta;x,y_k) \dee x\\
    &=\int_{\mb{R}^d} \big(V(x+y_k)-V(x^*)\big) p^{(\alpha)}_\eta(x) \dee x.
\end{align*}
WLOG, assume $x^*=0$. Since $V$ satisfies Assumption 3, we have
\begin{align*}
    \log N&\le L \int_{\mb{R}^d} \big| x+y_k\big|^\beta p^{(\alpha)}_\eta(x)\dee x=\frac{L\Gamma(\frac{d+1}{2})}{\pi^{\frac{d+1}{2}}}\eta \int_{\mb{R}^d} \big| x+y_k \big|^\beta (\big| x\big|^2+\eta^2)^{-\frac{d+1}{2}}\dee x\\
    &\le L|y_k|^\beta+\frac{L\Gamma(\frac{d+1}{2})}{\pi^{\frac{d+1}{2}}}\eta \int_{\mb{R}^d} \big| x \big|^\beta (\big| x\big|^2+\eta^2)^{-\frac{d+1}{2}}\dee x\\
    &\le L|y_k|^\beta+ \frac{L\Gamma(\frac{d+1}{2})}{\pi^{\frac{d+1}{2}}}\eta \int_{\mb{R}^d} (\big| x\big|^2+\eta^2)^{-\frac{d+1-\beta}{2}}\dee x \\
    &=L|y_k|^\beta+\frac{\Gamma(\frac{d+1}{2})\Gamma(\frac{1-\beta}{2})L}{\Gamma(\frac{d+1-\beta}{2})\pi^{\frac{1}{2}}} \eta^\beta .
\end{align*}
Therefore, when $\eta=\Theta(d^{-\frac{1}{2}}L^{-\frac{1}{\beta}})$, the expected number of rejections N is of order $\mb{E}[\exp(L|y_k|^\beta]$. 
Since $\pi^X$ satisfies a $1$-FPI with parameter $\CFPIone$, according to \cite{chafai2004entropies}, $p_t$ satisfies the $1$-FPI with parameter $\eta$ for any $t\in (0,\eta)$. Last it follows from Theorem 9 that for any $\eta>0$, to achieve a $\varepsilon$-accuracy in $\chi^2$ divergence, we need to perform the stable proximal sampler $K$ steps with
\begin{align*}
     K\ge \big( \CFPIone\eta^{-1}+1 \big)\log \left( \frac{\chi^2(\rho^X_0|\pi^X)}{\varepsilon} \right)=\mathcal{O}\big( \CFPIone d^{\frac{1}{2}}L^{\frac{1}{\beta}}\log \big( \frac{\chi^2(\rho^X_0|\pi^X)}{\varepsilon} \big) \big).
\end{align*}
\end{proof}

\begin{proof}[Proof of Proposition \ref{prop:error accumulation}]
    For all $k\ge 0$, we have
\begin{align*}   \TV(\Tilde{\rho}^X_{k+1},\rho^X_{k+1})&= \TV\big( \int \Tilde{\rho}^{X|Y}_{k+1}(\cdot|y)\Tilde{\rho}^Y_k(y)\dee y, \int \rho^{X|Y}_{k+1}(\cdot|y)\rho^Y_k(y)\dee y  \big) \\
&\le \TV\big( \int \Tilde{\rho}^{X|Y}_{k+1}(\cdot|y)\Tilde{\rho}^Y_k(y)\dee y, \int \rho^{X|Y}_{k+1}(\cdot|y)\Tilde{\rho}^Y_k(y)\dee y  \big)\\
&\quad +\TV\big( \int \rho^{X|Y}_{k+1}(\cdot|y)\Tilde{\rho}^Y_k(y)\dee y, \int \rho^{X|Y}_{k+1}(\cdot|y)\rho^Y_k(y)\dee y  \big)\\
&\le \mb{E}_{\Tilde{\rho}_k^Y}[\TV(\Tilde{\rho}^{X|Y}_{k+1}(\cdot,y)], \rho^{X|Y}_{k+1}(\cdot|y))+\TV(\Tilde{\rho}^Y_k,\rho^Y_k)\\
&\le \varepsilon_{\TV}+\TV(\Tilde{\rho}^X_k,\rho^X_k),
\end{align*}
where the last two inequalities follow from the data processing inequality. Therefore, $\TV(\Tilde{\rho}^X_{k},\rho^X_{k})\le k \varepsilon_{\TV}+\TV(\Tilde{\rho}^X_0,\rho_0^X)$ for all $k\ge 1$.

Next, the iteration complexity of Algorithm~\ref{alg:fractional proximal sampler} with an inexact R$\alpha$SO can be obtained from Proposition \ref{prop:error accumulation}. Since $\Tilde{\rho}^X_0=\rho^X_0$, according to Pinsker's inequality, we have
\begin{align*}
    \TV(\Tilde{\rho}^X_k,\pi^X)&\le \TV(\Tilde{\rho}^X_k,\rho^X_k)+\TV(\rho^X_k,\pi^X)\le \TV(\Tilde{\rho}^X_k,\rho^X_k)+\sqrt{\chi^2(\rho^X_k|\pi^X)/2}\\
    &\le k\varepsilon_{\TV}+\sqrt{\exp(-k\eta(\CFPIalpha+\eta)^{-1})\chi^2(\Tilde{\rho}^X_0|\pi^X)/2}.
\end{align*}
For any $\varepsilon>0$ and any $K$ satisfies
\begin{align*}
  K\ge  (\CFPIalpha\eta^{-1}+1)\ln\big(2\chi^2(\Tilde{\rho}^X_0|\pi^X)/\varepsilon^2\big),
\end{align*}
if the R$\alpha$SO can be implemented inexactly with $\varepsilon_{\TV}\le \frac{\varepsilon}{2K}$, the density of the $K^\textsuperscript{th}$ iterate of Algorithm~\ref{alg:fractional proximal sampler} is $\varepsilon$-close to the target in the total variation distance, i.e. $\TV(\Tilde{\rho}_X^K,\pi^X)\le \varepsilon$. 
\end{proof}

\subsection{Convergence under Weak Fractional Poincar\'e Inequality}\label{append:weak FPI}
Our main result for Algorithm~\ref{alg:fractional proximal sampler} in Theorem~\ref{thm:chi square convergence proximal sampler} is proved under the assumption the target satisfying $\alpha$-FPI. Furthermore, for the rejection-sampling based implementation of the R$\alpha$SO in Algorithm~\ref{alg:1-fractional proximal sampler}, the parameter $\alpha$ is set to be $1$. In order to use Theorem~\ref{thm:chi square convergence proximal sampler} for the case of generalized Cauchy targets, one has to check if the $\alpha$-FPI is satisfied or not, which depends on the degrees of freedom parameter $\nu$ of the generalized Cauchy desity. Specifically, when $\nu \ge 1$, $1$-FPI is satisfied and we hence have Corollary \ref{cor:implementable case}, part (i) based on Theorem~\ref{thm:chi square convergence proximal sampler}. When $\nu\in (0,1)$, $1$-FPI is not satisfied and hence Theorem~\ref{thm:chi square convergence proximal sampler} no longer applies.

To tackle this issue, we now introduce a generalization of Theorem~\ref{thm:chi square convergence proximal sampler} to the case when the target satisfies a weak version of Fractioanl Poincar\'{e} inequality (wFPI) and provide convergence guarantees for the stable proximal sampler in $\chi^2$-divergence.

\begin{definition}[weak Fractional Poincar\'{e} Inequality] For $\vartheta\in (0,2)$, a probability density $\mu$ satisfies a $\vartheta$-\textit{weak fractional Poincar\'{e} inequality} if there exists a decreasing function $\WFPI:\mb{R}_+\to\mb{R}_+$ such that for any $\phi:\mb{R}^d\!\to \mb{R}$ in the domain of $\mc{E}^{(\vartheta)}_{\mu}$ with $\mu(\phi)=0$, we have
    \begin{align}\label{eq:weak Poincare fractional}\tag{wFPI}
    \mu(\phi^2)\le \WFPI(r) \mc{E}^{(\vartheta)}_{\mu}(\phi)+r \lv \phi \rv_\infty^2 , \qquad \forall r>0,
    \end{align}
 where $\mc{E}^{(\vartheta)}_{\mu}$ is a non-local Dirichlet form associated with $\mu$ defined as $$\mc{E}^{(\vartheta)}_{\mu}(\phi):=c_{d,\vartheta}\!\iint_{\{x\neq y\}} \!\!\!\!\frac{(\phi(x)-\phi(y))^2}{|x-y|^{(d+\vartheta)}} \dee x \mu(y)\dee y\quad \text{ with }\quad c_{d,\vartheta}=\frac{2^\vartheta\Gamma((d+\vartheta)/2)}{\pi^{d/2}|\Gamma(-\vartheta/2)|}.$$    
\end{definition}

 The wFPI is satisfied by any probability density that is locally bounded, and is hence extremely general. Setting the parameter $r=0$, wFPI reduces to FPI with $\CFPI=\WFPI(0)$.

\begin{theorem}\label{thm:chi square convergence weak PI} Assume that $\pi^X$ satisfies the $\alpha$-wFPI with parameter $\WFPIalpha(r)$ for some $\alpha\in (0,2)$. Then for any step size $\eta>0$ and initial condition $\rho^X_0$ such that $R_\infty(\rho_0^X|\pi^X)<\infty$, the  $k$\textsuperscript{th} iterate of the stable proximal sampler with parameter $\alpha$ (Algorithm~\ref{alg:fractional proximal sampler}) satisfies
\begin{align*}
    \chi^2(\rho^X_k|\pi^X)&\le     \exp\big(-(\WFPIalpha(r)+\eta)^{-1}k\eta\big) \chi^2(\rho^X_0|\pi^X) \nonumber \\
    &\quad +4r\big( 1-\exp\big(-(\WFPIalpha(r)+\eta)^{-1}(k+1)\eta\big) \big)\exp\big(2R_\infty(\rho_0^X|\pi^X)\big).
\end{align*}  
\end{theorem}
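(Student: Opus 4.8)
The plan is to mirror the two‑step analysis behind Theorem~\ref{thm:chi square convergence proximal sampler}, replacing FPI by wFPI wherever the functional inequality is invoked, and additionally tracking the R\'enyi‑$\infty$ divergence $R_\infty(\rho|\pi):=\log\lv\rho/\pi\rv_\infty$ along the iterates in order to control the sup‑norm term produced by wFPI. I first record that $R_\infty(\,\cdot\,|\pi^X)$ is non‑increasing along the algorithm: in Step~1 the laws $\rho^X_t:=\rho^X_k\ast p^{(\alpha)}_t$ and $\pi^X_t:=\pi^X\ast p^{(\alpha)}_t$ are obtained by pushing $\rho^X_k$ and $\pi^X$ through the \emph{same} Markov kernel, so the data‑processing inequality for R\'enyi divergences gives $R_\infty(\rho^X_t|\pi^X_t)\le R_\infty(\rho^X_k|\pi^X)$; applying the same argument to the R$\alpha$SO kernel $\pi^{X|Y}$ in Step~2 gives $R_\infty(\rho^X_{k+1}|\pi^X)\le R_\infty(\rho^Y_k|\pi^Y)$. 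Chaining these yields $R_\infty(\rho^X_k|\pi^X)\le R_\infty(\rho^X_0|\pi^X)$ for all $k$, so that $\lv\rho^X_t/\pi^X_t-1\rv_\infty\le\lv\rho^X_t/\pi^X_t\rv_\infty\le\exp(R_\infty(\rho^X_0|\pi^X))$ throughout Step~1 of every iteration. This is exactly where the hypothesis $R_\infty(\rho^X_0|\pi^X)<\infty$ is used.

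Next I establish the wFPI analogue of the convolution lemma (Lemma~\ref{lem:subadditivity of FPI}): if $\mu_1$ satisfies $\vartheta$‑wFPI with profile $\beta_1(\cdot)$ and $\mu_2$ satisfies $\vartheta$‑FPI with constant $C_2$, then for all $r>0$ and all $\mu_1\ast\mu_2$‑centered $\phi$,
\begin{align*}
(\mu_1\ast\mu_2)(\phi^2)\le\big(\beta_1(r)+C_2\big)\,\mc E^{(\vartheta)}_{\mu_1\ast\mu_2}(\phi)+4r\,\lv\phi\rv_\infty^2 .
\end{align*}
The proof is the variance‑decomposition argument already used for Lemma~\ref{lem:subadditivity of FPI}: with $X\sim\mu_1$ and $Y\sim\mu_2$ independent, $\mb{E}[\Var(\phi(X+Y)\mmid Y)]$ is bounded by applying $\vartheta$‑wFPI of $\mu_1$ to the $\mu_1$‑centered slices $x\mapsto\phi(x+y)$ (whose oscillation is at most $2\lv\phi\rv_\infty$, accounting for the factor $4$), while $\Var(\mb{E}[\phi(X+Y)\mmid Y])$ is bounded using the ordinary $\vartheta$‑FPI of $\mu_2$; the two Dirichlet‑form contributions recombine into $\mc E^{(\vartheta)}_{\mu_1\ast\mu_2}(\phi)$ exactly as in Lemma~\ref{lem:subadditivity of FPI}. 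Since $p^{(\alpha)}_t$ satisfies $\alpha$‑FPI with constant $t\le\eta$ by \cite[Theorem 23]{chafai2004entropies}, this shows that $\pi^X_t=\pi^X\ast p^{(\alpha)}_t$ satisfies, for all $t\in(0,\eta]$ and $r>0$, the estimate $\pi^X_t(\phi^2)\le(\WFPIalpha(r)+\eta)\,\mc E^{(\alpha)}_{\pi^X_t}(\phi)+4r\lv\phi\rv_\infty^2$ for every $\pi^X_t$‑centered $\phi$.

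With these ingredients Step~1 runs as in Proposition~\ref{prop:step 1 decay FPI}. The identity $\frac{\dee}{\dee t}\chi^2(\rho^X_t|\pi^X_t)=-\mc E^{(\alpha)}_{\pi^X_t}(\rho^X_t/\pi^X_t)$ holds verbatim (its derivation uses no functional inequality); applying the wFPI estimate above to $\phi_t:=\rho^X_t/\pi^X_t-1$ — which is $\pi^X_t$‑centered, satisfies $\pi^X_t(\phi_t^2)=\chi^2(\rho^X_t|\pi^X_t)$, and has the same Dirichlet energy as $\rho^X_t/\pi^X_t$ — gives, with $a:=(\WFPIalpha(r)+\eta)^{-1}$ and $C_\infty:=\exp(2R_\infty(\rho^X_0|\pi^X))$,
\begin{align*}
\frac{\dee}{\dee t}\chi^2(\rho^X_t|\pi^X_t)\le-a\,\chi^2(\rho^X_t|\pi^X_t)+4ar\,C_\infty .
\end{align*}
Gr\"onwall over $t\in[0,\eta]$ yields $\chi^2(\rho^Y_k|\pi^Y)\le e^{-a\eta}\chi^2(\rho^X_k|\pi^X)+4rC_\infty(1-e^{-a\eta})$, and the Step~2 data‑processing bound $\chi^2(\rho^X_{k+1}|\pi^X)\le\chi^2(\rho^Y_k|\pi^Y)$ turns this into the recursion $\chi^2(\rho^X_{k+1}|\pi^X)\le e^{-a\eta}\chi^2(\rho^X_k|\pi^X)+4rC_\infty(1-e^{-a\eta})$. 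Unrolling the recursion, summing the geometric series, and using $1-e^{-ak\eta}\le 1-e^{-a(k+1)\eta}$ produces the claimed bound. I expect the main obstacle to be the wFPI convolution lemma — specifically, the careful bookkeeping of oscillation versus sup‑norm constants through the variance decomposition so that the resulting profile for $\pi^X_t$ is of the stated form $\WFPIalpha(r)+\eta$ with only a constant inflation of the $r\lv\cdot\rv_\infty^2$ term; the rest is a routine adaptation of the FPI argument together with the R\'enyi‑$\infty$ monotonicity bookkeeping.
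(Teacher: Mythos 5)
Your proposal is correct and follows essentially the same route as the paper: the same two-step analysis (exponential decay of $\chi^2$ along the fractional heat flow via a wFPI convolution lemma, then data processing for the R$\alpha$SO step), the same use of R\'enyi-$\infty$ monotonicity to control the sup-norm term, and the same Gr\"onwall-plus-geometric-series conclusion. The only cosmetic difference is inside the convolution lemma, where you condition the variance decomposition on the FPI-satisfying variable and apply wFPI to the inner conditional variance (picking up the factor $4$ there from the oscillation of the centered slices), whereas the paper's Lemma~\ref{lem:weak FPI constant} conditions on the wFPI variable and keeps the constant $r$, recovering the factor $4$ only when bounding $\lv\rho^X_t/\pi^X_t-1\rv_\infty^2$; the final constants coincide.
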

The proof of Theorem \ref{thm:chi square convergence weak PI} follows the same two-step analysis as it is introduced in the beginning of Section \ref{appen:convergence under FPI}. The convergence property corresponding to \textbf{Step 1} is stated in the following Proposition. 
\begin{proposition}\label{prop:step 1 decay under weak PI} Assume that $\pi^X$ satisfies the $\alpha$-wFPI with parameter $\WFPIalpha$ for some $\alpha\in (0,2)$, then for each $k\ge 0,r>0$,
\begin{align}\label{eq:step 1 decay under weak PI}
\begin{aligned}
    \chi^2(\rho^Y_k|\pi^Y)&\le  \exp\big(-(\WFPIalpha(r)+\eta)^{-1}\eta\big)\chi^2(\rho^X_k|\pi^X)\\
    &\quad+ 4r\big(1- \exp\big(-(\WFPIalpha(r)+\eta)^{-1}\eta \big) \exp\big(2R_\infty(\rho_k^X|\pi^X)\big).
\end{aligned}
    \end{align}
\end{proposition}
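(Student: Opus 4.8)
The plan is to run the same two-ingredient argument as in the FPI case (Proposition~\ref{prop:step 1 decay FPI}), replacing the Poincar\'e-type contraction by the weaker estimate supplied by the $\alpha$-wFPI and absorbing the overhead into the $\lv\cdot\rv_\infty$ error term. As there, write $\rho_k^Y=\rho_k^X\ast p^{(\alpha)}_\eta$ and, for $t\in[0,\eta]$, set $\rho_t^X\coloneqq\rho_k^X\ast p^{(\alpha)}_t$ and $\pi_t^X\coloneqq\pi^X\ast p^{(\alpha)}_t$, so that both densities evolve along the $\alpha$-fractional heat flow and $\rho_\eta^X=\rho_k^Y$, $\pi_\eta^X=\pi^Y$. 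The computation in the proof of Proposition~\ref{prop:step 1 decay FPI} --- which uses only the distributional and singular-integral definitions of $-(-\Delta)^{\alpha/2}$ and no functional inequality --- gives the identity
\[
\frac{\dee}{\dee t}\,\chi^2(\rho_t^X|\pi_t^X)=-\mc{E}^{(\alpha)}_{\pi_t^X}\!\Big(\tfrac{\rho_t^X}{\pi_t^X}\Big),
\]
so the task reduces to lower bounding this non-local Dirichlet form in terms of $\chi^2(\rho_t^X|\pi_t^X)$, and for that I need a wFPI for $\pi_t^X$ valid uniformly in $t\in(0,\eta]$.

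First I would establish a convolution (sub-additivity) lemma for the weak fractional Poincar\'e inequality, the wFPI analogue of Lemma~\ref{lem:subadditivity of FPI}: if $\mu_1$ satisfies the $\vartheta$-wFPI with rate function $\beta_1$ and $\mu_2$ satisfies the $\vartheta$-FPI with constant $C_2$, then $\mu_1\ast\mu_2$ satisfies the $\vartheta$-wFPI with rate function $r\mapsto\beta_1(r)+C_2$, up to a fixed multiplicative constant on the $\lv\cdot\rv_\infty^2$ term. The proof copies the variance-decomposition scheme of Lemma~\ref{lem:subadditivity of FPI}: with $X\sim\mu_1$, $Y\sim\mu_2$ independent and $\phi$ mean-zero under $\mu_1\ast\mu_2$, decompose $\Var_{\mu_1\ast\mu_2}(\phi)=\mb{E}[\Var(\phi(X+Y)\mid Y)]+\Var(\mb{E}[\phi(X+Y)\mid Y])$, apply the wFPI of $\mu_1$ to the inner conditional variance and the FPI of $\mu_2$ to the outer variance, then recombine the two non-local Dirichlet integrals by the change of variables $u=x+y$ exactly as in Lemma~\ref{lem:subadditivity of FPI}; the surviving $\lv\cdot\rv_\infty^2$ contribution is controlled because the centered conditional function $x\mapsto\phi(x+y)-\mb{E}_X[\phi(X+y)]$ has oscillation at most that of $\phi$. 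Since $p^{(\alpha)}_t$ satisfies the $\alpha$-FPI with constant $t\le\eta$ by~\cite[Theorem 23]{chafai2004entropies}, and since enlarging the rate function only weakens the wFPI, this yields: for every $t\in(0,\eta]$, $\pi_t^X=\pi^X\ast p^{(\alpha)}_t$ satisfies the $\alpha$-wFPI with rate function $r\mapsto\WFPIalpha(r)+\eta$, uniformly in $t$.

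Next I would bound the max-divergence along the flow. Since $\rho_t^X$ and $\pi_t^X$ are obtained from $\rho_k^X$ and $\pi^X$ by convolution with the same probability density $p^{(\alpha)}_t$, a pointwise bound $\rho_k^X\le e^{R}\pi^X$ is preserved, so $R_\infty(\rho_t^X|\pi_t^X)\le R_\infty(\rho_k^X|\pi^X)$ for all $t\in[0,\eta]$ and the $\pi_t^X$-mean-zero function $\phi_t\coloneqq\rho_t^X/\pi_t^X-1$ obeys $\lv\phi_t\rv_\infty\le e^{R_\infty(\rho_k^X|\pi^X)}$ (the claim being trivial when $R_\infty(\rho_k^X|\pi^X)=\infty$). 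Inserting $\phi_t$ into the wFPI of $\pi_t^X$ from the previous step and combining with the derivative identity gives, after reparametrizing $r$, that for every $r>0$
\[
\frac{\dee}{\dee t}\,\chi^2(\rho_t^X|\pi_t^X)\le-\big(\WFPIalpha(r)+\eta\big)^{-1}\Big(\chi^2(\rho_t^X|\pi_t^X)-4r\,e^{2R_\infty(\rho_k^X|\pi^X)}\Big),
\]
where the coefficient $4r$ collects the fixed constant from the convolution lemma together with the bound on $\lv\phi_t\rv_\infty$. Integrating this linear differential inequality by Gronwall from $t=0$, where $\chi^2=\chi^2(\rho_k^X|\pi^X)$, to $t=\eta$, where $\chi^2=\chi^2(\rho_k^Y|\pi^Y)$, produces exactly the claimed inequality~\eqref{eq:step 1 decay under weak PI}.

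The step I expect to be the main obstacle is the convolution lemma for the wFPI: one must track the $\lv\cdot\rv_\infty^2$ error term through the variance decomposition and check that, after averaging over $Y$, it is controlled by a constant multiple of $r\lv\phi\rv_\infty^2$ for the original test function (rather than for the conditioned ones) while the rate function still sums to $\beta_1+C_2$. The remaining pieces --- the derivative identity, the monotonicity of $R_\infty$ under convolution, and the Gronwall step --- are routine adaptations of the FPI argument.
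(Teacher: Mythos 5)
Your proposal follows essentially the same route as the paper: the fractional heat-flow interpolation with the Dirichlet-form derivative identity, a convolution lemma combining the wFPI of $\pi^X$ with the FPI of $p^{(\alpha)}_t$ from \cite[Theorem 23]{chafai2004entropies} (this is exactly the paper's Lemma~\ref{lem:weak FPI constant}, proved by the same variance decomposition), the sup-norm bound $\lv\rho_t^X/\pi_t^X-1\rv_\infty^2\le 4\exp(2R_\infty(\rho_k^X|\pi^X))$ via data processing for $R_\infty$, and Gronwall. The only cosmetic difference is that you apply the wFPI to the inner conditional variance and the FPI to the outer one (the paper does the reverse, which keeps the $r\lv\phi\rv_\infty^2$ error with constant exactly $1$ rather than up to a factor of $4$), but this affects only an absolute constant and not the substance of the argument.
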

\begin{proof}[Proof of Proposition \ref{prop:step 1 decay under weak PI}] In the stable proximal sampler with parameter $\alpha$, we have $\rho^Y_k=\rho^X_k\ast p^{(\alpha)}_\eta$ and $\pi^Y=\pi^X\ast p^{(\alpha)}_\eta$. Therefore we can view $\rho_k^Y$ and $\pi^Y$ as $\rho_k^X$ and $\pi^X$ evolving along the fractional heat flow by time $\eta$ respectively. For any $t\in [0,\eta]$, define $\pi^X_t=\pi^X\ast p^{(\alpha)}_t$ and $\rho^X_t=\rho^X_k\ast p^{(\alpha)}_t$. We have
\begin{align*}
    \frac{\dee}{\dee t} \chi^2(\rho^X_t|\pi^X_t)=-\mc{E}_{\pi_t^X}(\frac{\rho^X_t}{\pi^X_t})=-\mc{E}_{\pi_t^X}(\frac{\rho^X_t}{\pi^X_t}-1).
\end{align*}
According to \cite[Theorem 23]{chafai2004entropies}, $p_t^{(\alpha)}$ satisfies $\alpha$-FPI with parameter $\eta$ for all $t\in (0,\eta]$. According to Lemma \ref{lem:weak FPI constant}, $\pi^X_t$ satisfies the $\alpha$-wFPI with $\WFPIalpha(r)+\eta$. Therefore we get
\begin{align*}
    \frac{\dee}{\dee t}\chi^2(\rho^X_t|\pi^X_t) &\le \big(\WFPIalpha(r)+\eta\big)^{-1}\chi^2(\rho^X_t|\pi^X_t)+r\big(\WFPIalpha(r)+\eta\big)^{-1} \lv \rho^X_t/\pi^X_t-1 \rv_\infty^2 \\
    &\le \big(\WFPIalpha(r)+\eta\big)^{-1}\chi^2(\rho^X_t|\pi^X_t)+4r\big(\WFPIalpha(r)+\eta\big)^{-1} \exp\big( 2R_\infty(\rho^X_k|\pi^X)\big),
\end{align*}
where the last inequality follows from the definition of Renyi-divergence and the data processing inequality. Last, \eqref{eq:step 1 decay under weak PI} follows from Gronwall's inequality.
\end{proof}
\begin{proof}[Proof of Theorem \ref{thm:chi square convergence weak PI}] According to Proposition \ref{prop:step 1 decay under weak PI}, the $\chi^2$ decaying property in step 1 of the algorithm is as follows,
    \begin{align*}
        \chi^2(\rho^Y_k|\pi^Y)&\le  \exp\big(-(\WFPIalpha(r)+\eta)^{-1}\eta\big)\chi^2(\rho^X_k|\pi^X)\\
        &\quad+ 4r\big(1- \exp\big(-(\WFPIalpha(r)+\eta)^{-1}\eta \big) \exp\big(2R_\infty(\rho_k^X|\pi^X)\big). 
    \end{align*}
In step 2, we have $\rho^X_{k+1}=\rho^Y_k\ast \pi^{X|Y}$ and $\pi^X=\pi^Y\ast \pi^{X|Y}$. Therefore according to the data processing inequality, we get
\begin{align*}
\chi^2(\rho^X_{k+1}|\pi^X)&\le \chi^2(\rho^Y_k|\pi^Y)\\
&\le \exp\big(-(\WFPI(r)+\eta)^{-1}\eta\big)\chi^2(\rho^X_k|\pi^X)\\
        &\quad+ 4r\big(1- \exp\big(-(\WFPIalpha(r)+\eta)^{-1}\eta \big) \exp\big(2R_\infty(\rho_k^X|\pi^X)\big)\\ 
    &\le \exp\big(-k(\WFPIalpha(r)+\eta)^{-1}\eta\big)\chi^2(\rho^X_0|\pi^X)\\
        &\quad+ +4r\big( 1-\exp\big(-(\WFPIalpha(r)+\eta)^{-1}(k+1)\eta\big) \big)\exp\big(2R_\infty(\rho_0^X|\pi^X)\big),
\end{align*}
where the last inequality follows from the data processing inequality. Last, apply the above iterative relation $k$ times and we prove \eqref{eq:step 1 decay under weak PI}.
\end{proof}
\begin{lemma}\label{lem:weak FPI constant} Let $\mu_1$ be a probability density on $\mb{R}^d$ satisfying the $\vartheta$-wFPI with parameter $\WFPI(r)$. Let $\mu_2$ be a probability density on $\mb{R}^d$ satisfying the $\vartheta$-FPI with parameter $\CFPI$. Then $\mu_1\ast \mu_2$ satisfies $\vartheta$-wFPI with parameter $\WFPI(r)+\CFPI$.
\end{lemma}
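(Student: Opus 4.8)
The plan is to transcribe the convolution argument behind Lemma~\ref{lem:subadditivity of FPI}, replacing the strong fractional Poincaré bound on the ``inner'' variance by the weak inequality for $\mu_1$, while keeping the strong bound for $\mu_2$ on the ``outer'' variance. Let $X\sim\mu_1$ and $Y\sim\mu_2$ be independent so that $X+Y\sim\mu_1\ast\mu_2$, and fix a test function $\phi$ with $(\mu_1\ast\mu_2)(\phi)=0$. I would start from the second-moment decomposition
$$(\mu_1\ast\mu_2)(\phi^2)=\mb{E}\big[\Var\!\big(\phi(X+Y)\mid Y\big)\big]+\Var\!\big(\mb{E}[\phi(X+Y)\mid Y]\big)$$
and bound the two terms separately.

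For the first term, fix $y$ and put $\psi_y(\cdot)\coloneqq\phi(\cdot+y)$. Since the non-local Dirichlet form is unchanged by adding a constant to its argument, applying the $\vartheta$-wFPI of $\mu_1$ to the centered function $\psi_y-\mu_1(\psi_y)$ yields, for every $r>0$,
$$\Var\!\big(\phi(X+Y)\mid Y=y\big)\le \WFPI(r)\,\mc{E}^{(\vartheta)}_{\mu_1}(\psi_y)+r\,\lv\psi_y-\mu_1(\psi_y)\rv_\infty^2.$$
Taking $\mb{E}_Y$ and performing the change of variables $u=x+y$ exactly as in the proof of Lemma~\ref{lem:subadditivity of FPI} identifies $\mb{E}_Y\big[\mc{E}^{(\vartheta)}_{\mu_1}(\psi_Y)\big]$ with $\mc{E}^{(\vartheta)}_{\mu_1\ast\mu_2}(\phi)$. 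The residual term is harmless: $\mu_1(\psi_y)$ is a weighted average of values of $\phi$, hence lies in $[\inf\phi,\sup\phi]$, so $\lv\psi_y-\mu_1(\psi_y)\rv_\infty$ is at most the oscillation of $\phi$, uniformly in $y$. For the second term, set $g(y)\coloneqq\mb{E}[\phi(X+y)]=\int\phi(x+y)\,\mu_1(x)\,\dee x$; applying the $\vartheta$-FPI of $\mu_2$ to $g$ and then Jensen's inequality inside the Dirichlet form, precisely as in the passage leading to~\eqref{eq:variacen decomposition term 2}, gives $\Var_{\mu_2}(g)\le\CFPI\,\mc{E}^{(\vartheta)}_{\mu_2}(g)\le\CFPI\,\mc{E}^{(\vartheta)}_{\mu_1\ast\mu_2}(\phi)$.

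Adding the two estimates produces $(\mu_1\ast\mu_2)(\phi^2)\le(\WFPI(r)+\CFPI)\,\mc{E}^{(\vartheta)}_{\mu_1\ast\mu_2}(\phi)+r\,\lv\phi\rv_\infty^2$, which is exactly the claimed $\vartheta$-wFPI for $\mu_1\ast\mu_2$ with parameter $\WFPI(r)+\CFPI$, once one uses the standard identification of the residual scale with the oscillation of $\phi$ (an overall factor $4$ relating $\lv\phi\rv_\infty^2$ to the squared oscillation can otherwise be absorbed by rescaling $r$). I expect the one point requiring care to be precisely this bookkeeping of the $\lv\cdot\rv_\infty$ term after centering $\psi_y$ against $\mu_1$ and averaging over $y$ — one must ensure it does not acquire a dimension- or $\mu$-dependent constant; the rest is a line-by-line repetition of the computation for the strong fractional Poincaré inequality in Lemma~\ref{lem:subadditivity of FPI}.
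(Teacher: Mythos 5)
Your argument is sound and uses the same variance decomposition as the paper, but you have assigned the weak and strong inequalities to the opposite halves of the decomposition, and this is exactly what creates the bookkeeping problem you flag at the end. You condition on $Y\sim\mu_2$ and apply the $\vartheta$-wFPI of $\mu_1$ to the inner conditional variance; since $\psi_y=\phi(\cdot+y)$ is not centered against $\mu_1$, you must first subtract $\mu_1(\psi_y)$, and the residual becomes $r\lv\psi_y-\mu_1(\psi_y)\rv_\infty^2\le 4r\lv\phi\rv_\infty^2$. Absorbing the $4$ by rescaling $r$ does not recover the stated conclusion: it yields the $\vartheta$-wFPI for $\mu_1\ast\mu_2$ with parameter $\WFPI(r/4)+\CFPI$ rather than $\WFPI(r)+\CFPI$ (harmless for the downstream Corollary~\ref{cor:implementable case}, where $\WFPI(r)=c(1+r^{-(1-\nu)/\nu})$ and this costs only a constant, but not literally the lemma as stated).

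The paper's proof makes the mirror-image assignment precisely to avoid this: take $X\sim\mu_2$, $Y\sim\mu_1$, bound the inner conditional variance $\Var(\phi(X+Y)\mid Y)$ with the \emph{strong} FPI of $\mu_2$ (which needs no centering, being a variance bound), and apply the \emph{weak} FPI of $\mu_1$ to the outer term $\Var_{\mu_1}(g)$ with $g(y)=\int\phi(x+y)\,\mu_2(x)\,\dee x$. The point is that $\mu_1(g)=(\mu_1\ast\mu_2)(\phi)=0$, so $g$ is automatically centered and the residual is directly $r\lv g\rv_\infty^2\le r\lv\phi\rv_\infty^2$ by convexity of the sup-norm — no oscillation factor appears. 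The rest (Jensen inside the Dirichlet form and the change of variables identifying everything with $\mc{E}^{(\vartheta)}_{\mu_1\ast\mu_2}(\phi)$) is identical in both versions. So: your proof is correct up to a constant in the residual term; to get the lemma with the exact stated parameter, flip which measure you condition on so that the weak inequality is applied to the already-centered conditional expectation.
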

\begin{proof}[Proof of Lemma \ref{lem:weak FPI constant}]  Let $X,Y$ be two independent random variables such that $X\sim \mu_2$ and $Y\sim \mu_1$. According to variance decomposition, we have for any function $\phi$ such that $\mu_1\ast \mu_2(\phi)=0$,
\begin{align*}
    \Var_{\mu_1\ast \mu_2}\left(\phi\right)&=\Var\left( \phi(X+Y) \right)=\mb{E}\left[ \Var\left(\phi(X+Y)| Y\right) \right]+\Var\left( \mb{E}\left[ \phi(X+Y)|Y \right] \right).
\end{align*}
Since $X\sim \mu_2$ and $\mu_2$ satisfies the $\vartheta$-FPI with parameter $\CFPI$, we have
\begin{align}\label{eq:variacen decomposition term 1 weak}
    &\mb{E}[\Var\left(\phi(X+Y)| Y\right)] \\
    \le &\,  \CFPI c_{d,\alpha} \iiint_{\{z\neq 0\}} \frac{\left(\phi(x+y+z)-\phi(x+y)\right)^2}{|z|^{(d+\vartheta)}} \dee z \mu_2(x)\dee x \mu_1 (y)\dee y.
\end{align}
Since $Y\sim \mu_1$ and $\mu_1$ satisfies the $\vartheta$-wFPI with parameter $\WFPI$, following the proof of Lemma \ref{lem:subadditivity of FPI}, we have
\small{
\begin{align}\label{eq:variacen decomposition term 2 weak}
\begin{aligned}
 &  \Var\left( \mb{E}\left[ \phi(X+Y)|Y \right] \right)\\
 \le &\WFPI c_{d,\alpha} \iint_{\{z\neq 0\}} \int \frac{\left( \phi(x+y+z)- \phi(x+y) \right)^2}{|z|^{(d+\vartheta)}} \mu_2(x)\dee x \dee z \mu_1 (y)\dee y  \\
   &\quad + r \lv \int \phi(x+\cdot)\mu_2(x)\dee x-\iint \phi(x+y) 
   \mu_2(x)\dee x\mu_1 (y)\dee y \rv_\infty^2 \\
   \le &  \WFPI c_{d,\alpha} \iint_{\{z\neq 0\}} \int \frac{\left( \phi(x+y+z)- \phi(x+y) \right)^2}{|z|^{(d+\vartheta)}} \mu_2(x)\dee x \dee z \mu_1 (y)\dee y +r\lv  \phi \rv_\infty^2,
   \end{aligned}
\end{align}
}
where the last inequality follows from the fact that $\mu_1\ast \mu_2(\phi)=0$ and the convexity $\lv \cdot \rv_\infty$. Combining \eqref{eq:variacen decomposition term 1 weak} and \eqref{eq:variacen decomposition term 2 weak}, we have
\begin{align*}
&\Var_{\mu_1\ast\mu_2}(\phi)\\
\le \, & \CFPI c_{d,\alpha} \iiint_{\{z\neq 0\}}  \frac{\left( \phi(x+y+z)- \phi(x+y) \right)^2}{|z|^{(d+\vartheta)}} \dee z \mu_2(x)\dee x  \mu_1 (y)\dee y \\
    &\quad + \WFPI(r) c_{d,\alpha} \iint_{\{z\neq 0\}} \int \frac{\left(\phi(x+y+z)-\phi(x+y)\right)^2}{|z|^{(d+\vartheta)}}  \mu_2(x)\dee x \dee z \mu_1 (y)\dee y+r\lv\phi\rv_\infty^2\\
    =& \left(\WFPI(r)+\CFPI\right)c_{d,\alpha} \iint_{\{z\neq 0\}} \frac{\left(\phi(u+z)-\phi(u)\right)^2}{|z|^{(d+\vartheta)}} \dee z \mu_1\ast\mu_2(u)\dee u+r\lv \phi \rv_\infty^2\\
=&\left(\WFPI(r)+\CFPI\right) \mc{E}_{\mu_1\ast\mu_2}(\phi)+r\lv \phi \rv_\infty^2.
\end{align*}
Lemma \ref{lem:weak FPI constant} is hence proved.
\end{proof}
\subsection{Proofs for the Generalized Cauchy Examples}\label{appen:cauchy application} In this section, we provide proofs for the two corollaries in Section \ref{sec:application}.\\

\begin{proof}[Proof of Corollary \ref{cor:ideal case}] According to \cite[Corollary 1.2]{wang2015functional}, $\pi_\nu$ satisfies $\alpha$-FPI with parameter $\CFPI$ for any $\alpha\le \min(2,\nu)$. Therefore it follows from Theorem \ref{thm:chi square convergence proximal sampler} that 
\begin{align}\label{eq:decay generalized cauchy}
    \chi^2(\rho^X_k|\pi_\nu)\le \exp\left( -k\eta \left(\CFPIalpha+\eta\right)^{-1} \right)\chi^2(\rho^X_0|\pi_\nu).
\end{align}
 According to \cite[Corollary 22]{mousavi23towards}, when $\rho_0^X=\mc{N}(0, I_d)$ 
 and $d\ge 2$, $R_\infty(\rho_0^X|\pi_\nu)\le \ln(2^{\nu/2}\Gamma(\nu/2))+\ln(\frac{d+\nu}{2e})$ which implies $\chi^2(\rho_0^X|\pi_\nu)=\Theta( d )$. Therefore Corollary \ref{cor:ideal case} follows from \eqref{eq:decay generalized cauchy} and $\eta\in (0,1)$. 
\end{proof}

\begin{proof}[Proof of Corollary \ref{cor:implementable case}] 
We prove the two part in the Corollary separately:\\

    \textbf{(i) When $\nu\ge 1$}, according to \cite[Corollary 1.2]{wang2015functional} $\pi_\nu$ satisfies the $1$-FPI with parameter $\CFPIone$. Corollary \ref{cor:oracle complexity FPI} applies with $L=4(d+\nu)$ and $\beta=1/4$ and the iteration complexity of Algorithm \ref{alg:fractional proximal sampler} is of order $\mathcal{O}\big(\CFPIone d^{\frac{1}{2}}(d+\nu)^{4}\ln(\chi^2(\rho_0^X|\pi_\nu)/\varepsilon)\big)$. 
    \\ 
    
    \textbf{(ii) When $\nu\in (0,1)$}, according to \cite[Corollary 1.2]{wang2015functional}, there exists a positive constant $c$ such that $\pi_\nu$ satisfies the $1$-wFPI with parameter
    \begin{align}\label{eq:1FPI parameter}
        \beta_{\text{\normalfont WFPI(1)}}(r)=c(1+r^{-(1-\nu)/\nu}).
    \end{align} 
    Theorem \ref{thm:chi square convergence weak PI} implies that
    \small{
\begin{align*}
\chi^2(\rho^X_k|\pi_\nu)&\le \exp\big(- \frac{k\eta}{\eta+c(1+r^{-(1-\nu)/\nu})} \big)\chi^2(\rho^X_0|\pi_\nu)\\&\qquad +r\big( 1-\exp\big( -\frac{(k+1)\eta}{\eta+c(1+r^{-(1-\nu)/\nu})} \big) \big)\exp\big(2R_\infty(\rho_0^X|\pi^X)\big)\\
    &\le \exp\big(- \frac{k\eta}{\eta+c(1+r^{-(1-\nu)/\nu})} \big)\chi^2(\rho^X_0|\pi_\nu)
    \\
    &\qquad+\frac{(k+1)\eta r}{\eta+c(1+r^{-(1-\nu)/\nu})}\exp\big(2R_\infty(\rho_0^X|\pi^X)\big).
\end{align*}
}
For any $\varepsilon>0$ and $k\ge 1$, pick $r=\frac{\exp\big(-2\nu R_\infty(\rho_0^X|\pi_\nu)\big)c^\nu \varepsilon^\nu}{(k+1)^\nu \eta^\nu}$, we have $\chi^2(\rho^X_k|\pi_\nu)\le \varepsilon$ if
\begin{align*}
    k \ge \big[1+ c^{\frac{1}{\nu}}\eta^{-\frac{1}{\nu}}+ 2^{1/\nu}c\eta^{-1}\varepsilon^{-(1-\nu)/\nu}\exp\big( \frac{2(1-\nu)R_\infty(\rho_0^X|\pi_\nu)}{\nu} \big) \big]\ln^{1/\nu} (\frac{2\chi^2(\rho^X_0|\pi_\nu)}{\varepsilon}).
\end{align*}
 Corollary \ref{cor:oracle complexity FPI} applies with $L=(d+\nu)/\nu$ and $\beta=\nu/4$. Therefore, by choosing $\eta=\Theta(d^{-\frac{1}{2}}(d+\nu)^{-\frac{4}{\nu}})$, the iteration complexity in Algorithm \ref{alg:fractional proximal sampler} is of order 
\begin{align*}
    \mathcal{O}\bigg( \max\big\{ c^{\frac{1}{\nu}}d^{\frac{1}{2\nu}+\frac{4}{\nu^2}},  c d^{\frac{1}{2}+\frac{4}{\nu}} \varepsilon^{-\frac{1-\nu}{\nu}}\exp\big( \frac{2(1-\nu)R_\infty(\rho_0^X|\pi_\nu)}{\nu}\big)\big\} \ln^{\frac{1}{\nu}} (\frac{2\chi^2(\rho^X_0|\pi_\nu)}{\varepsilon})  \bigg).
\end{align*}
\end{proof}

\section{Proofs for the Lower Bounds on the Stable Proximal Sampler}\label{app:stable_proximal_lower_bound}
In this section we introduce the proofs for the lower bounds for the stable proximal sampler with parameter $\alpha$ when the target is the generalized Cauchy density with degrees of freedom strictly smaller than $\alpha$. The lower bound is proved following the idea introduced in Section \ref{sec:lower bound}. 
\begin{lemma}\label{lem:frac_prox_g_rec}
    Suppose $(x_k,y_k)_k$ are the iterates of the stable proximal sampler with parameter $\alpha$, step size $\eta$ and target density $\pi^X \propto \exp(-V)$ for some $V : \mb{R}^d \to \mb{R}$. Let $G(x) = \exp(\kappa V(x))$ with $\kappa \in (0,1) $. Then, for every $k \geq 0$,
    \begin{align*}
        \mb{E}[G(x_{k+1})] \leq \mb{E}[G(x_k + 2^{\frac{1}{\alpha}}\eta^{\frac{1}{\alpha}} z_k)],
    \end{align*}
    where $z_k$, with density $p^{(\alpha)}_1$, is sampled independently from $x_k$.
\end{lemma}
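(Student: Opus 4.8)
The plan is to mirror the proof of Lemma~\ref{lem:prox_g_rec} for the Gaussian proximal sampler, with two modifications forced by the heavy-tailed regime: the Gaussian kernel is replaced by the isotropic $\alpha$-stable kernel $p^{(\alpha)}_\eta$, and the Jensen step there (valid only for $\kappa\ge 1$) is replaced by a monotone correlation inequality suited to $\kappa\in(0,1)$. The scaling facts about $p^{(\alpha)}$ collected in Appendix~\ref{appen:prelimilaries} — self-similarity, $p^{(\alpha)}_t(x)=t^{-d/\alpha}p^{(\alpha)}_1(t^{-1/\alpha}x)$, and the characteristic function $\mb{E}\,e^{i\langle\xi,Z\rangle}=e^{-|\xi|^\alpha}$ for $Z\sim p^{(\alpha)}_1$ — take care of the rest.

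First I would condition on $y_k$. Since $\pi^{X|Y}(x\mmid y)\propto e^{-V(x)}p^{(\alpha)}(\eta;x,y)$ and the isotropic stable density is symmetric, $p^{(\alpha)}(\eta;x,y)=p^{(\alpha)}_\eta(x-y)$, and by self-similarity $\int f(x)p^{(\alpha)}_\eta(x-y)\dee x=\mb{E}[f(y+\eta^{1/\alpha}z_1)]$ with $z_1\sim p^{(\alpha)}_1$. Writing $W:=G(y_k+\eta^{1/\alpha}z_1)=\exp\!\big(\kappa V(y_k+\eta^{1/\alpha}z_1)\big)$ and using $e^{(\kappa-1)V}=W^{1-1/\kappa}$, $e^{-V}=W^{-1/\kappa}$, this yields
\[
\mb{E}[G(x_{k+1})\mmid y_k]=\frac{\int e^{(\kappa-1)V(x)}p^{(\alpha)}_\eta(x-y_k)\dee x}{\int e^{-V(x)}p^{(\alpha)}_\eta(x-y_k)\dee x}=\frac{\mb{E}[W^{1-1/\kappa}\mmid y_k]}{\mb{E}[W^{-1/\kappa}\mmid y_k]}=\frac{\mb{E}[W\cdot W^{-1/\kappa}\mmid y_k]}{\mb{E}[W^{-1/\kappa}\mmid y_k]}.
\]

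The main step — which I expect to be the only genuinely nonroutine part — is to bound this last ratio by $\mb{E}[W\mmid y_k]=\mb{E}[G(y_k+\eta^{1/\alpha}z_1)\mmid y_k]$, which amounts to the inequality $\mb{E}[W\cdot W^{-1/\kappa}\mmid y_k]\le \mb{E}[W\mmid y_k]\,\mb{E}[W^{-1/\kappa}\mmid y_k]$. Since $\kappa>0$, the map $w\mapsto w$ is increasing and $w\mapsto w^{-1/\kappa}$ is decreasing on $(0,\infty)$, so under the (conditional) law of the positive random variable $W$ these two functions are negatively correlated; this is Chebyshev's sum (correlation) inequality, obtained by integrating $(w-w')(w^{-1/\kappa}-(w')^{-1/\kappa})\le 0$ against two independent copies of $W$. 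This is exactly where the proof departs from Lemma~\ref{lem:prox_g_rec}: for $\kappa\in(0,1)$ both exponents $1-1/\kappa$ and $-1/\kappa$ are negative, so the two Jensen inequalities used in the Gaussian case point the wrong way and must be replaced by this correlation bound.

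Finally I would undo the conditioning and fold in the forward step. The tower property gives $\mb{E}[G(x_{k+1})]\le \mb{E}[G(y_k+\eta^{1/\alpha}z_1)]$, and the first step of Algorithm~\ref{alg:fractional proximal sampler} gives $y_k\mmid x_k\sim p^{(\alpha)}_\eta(\cdot-x_k)$, so $y_k=x_k+\eta^{1/\alpha}z_2$ with $z_2\sim p^{(\alpha)}_1$ independent of $x_k$ and of $z_1$. Hence $\mb{E}[G(x_{k+1})]\le \mb{E}[G(x_k+\eta^{1/\alpha}(z_1+z_2))]$; since $z_1+z_2$ has characteristic function $e^{-2|\xi|^\alpha}$, self-similarity gives $z_1+z_2\deq 2^{1/\alpha}z_k$ with $z_k\sim p^{(\alpha)}_1$ independent of $x_k$, and the claimed bound $\mb{E}[G(x_{k+1})]\le \mb{E}[G(x_k+2^{1/\alpha}\eta^{1/\alpha}z_k)]$ follows.
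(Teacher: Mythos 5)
Your proof is correct, and the overall architecture matches the paper's: condition on $y_k$, express the R$\alpha$SO expectation as a ratio of two integrals against $p^{(\alpha)}_\eta(\cdot-y_k)$, bound that ratio by $\mb{E}[G(y_k+\eta^{1/\alpha}z_1)\mmid y_k]$, then undo the conditioning and use self-similarity plus the stability of the characteristic function to combine the two independent stable increments into $2^{1/\alpha}\eta^{1/\alpha}z_k$. The one place where you genuinely diverge is the key ratio bound. The paper works with $W'=\pi^X(y_k+\eta^{1/\alpha}z_1)$ and the map $T(r)=r^{-\kappa}$, noting that for $\kappa\in(0,1)$ the map $T$ is convex while $r\mapsto rT(r)=r^{1-\kappa}$ is concave; two applications of Jensen then give $\mb{E}[W'T(W')]/\mb{E}[W']\le T(\mb{E}[W'])\le\mb{E}[T(W')]$. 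You instead work with $W=G(y_k+\eta^{1/\alpha}z_1)=(W')^{-\kappa}$ (up to the normalizing constant, which cancels in the ratio) and invoke Chebyshev's correlation inequality for the increasing function $w\mapsto w$ and the decreasing function $w\mapsto w^{-1/\kappa}$, yielding $\mb{E}[W^{1-1/\kappa}]\le\mb{E}[W]\,\mb{E}[W^{-1/\kappa}]$ in one step. The two routes establish the same intermediate inequality; yours is arguably cleaner and, since $w\mapsto w^{-1/\kappa}$ is decreasing for every $\kappa>0$, it applies uniformly to all $\kappa>0$ and would also subsume the Gaussian case of Lemma~\ref{lem:prox_g_rec}, whereas the paper's two-Jensen argument there is confined to $\kappa\ge1$ and the present lemma's argument to $\kappa\in(0,1)$. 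Your diagnosis of why the Gaussian-case Jensen steps fail for $\kappa\in(0,1)$ (both exponents become negative, so both power maps are convex) is also accurate.
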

\begin{proof}[Proof of Lemma \ref{lem:frac_prox_g_rec}] Recall that $\pi^{X|Y}(x|y)\propto \pi^X(x)p^{(\alpha)}(\eta; x,y)$.  We have
 \begin{align*}
    \mb{E}[G(x_{k+1})]&=\mb{E}\big[ \mb{E}[G(x_{k+1})|y_k] \big] =\mb{E} \big[ Z_{y_k}^{-1} \int G(x)\pi^X(x) p^{(\alpha)}_\eta(x-y_k)\dee x \big]\\
    &=\mb{E}\big[ Z_{y_k}^{-1} \mb{E}[ G(y_k+\eta^{\frac{1}{\alpha}}z_k)\pi^X(y_k+\eta^{\frac{1}{\alpha}} z_k)|y_k ] \big],
\end{align*}
where $Z_{y_k}=\int \pi^X(x) p^{(\alpha)}_\eta(x-y_k)\dee x=\mb{E}[ \pi^X(y_k+\eta^{\frac{1}{\alpha}} z_k)|y_k ]$ and $z_k$ is the $\alpha$-stable random vector with density $p_1^{(\alpha)}$, which is independent to $y_k,x_k$. 
Let $T:\mb{R}_+\to \mb{R}$ be $T(r)=r^{-\kappa}$. Since $\kappa\in (0,1)$, $T$ is convex and $r\mapsto rT(r)$ is concave. According to the fact that $G(x)=T(\pi^X)(x)$ and Jensen's inequality, we have
\begin{align*}
    \mb{E}[G(x_{k+1})]&= \mb{E}\bigg[ \frac{\mb{E}\big[(\pi^X T(\pi^X))(y_k+\eta^{\frac{1}{\alpha}} z_k)|y_k\big]}{\mb{E}\big[\pi^X(y_k+\eta^{\frac{1}{\alpha}} z_k)|y_k]} \bigg]\\
    &\le \mb{E}\big[T\big( \mb{E}[\pi^X(y_k+\eta^{\frac{1}{\alpha}} z_k)|y_k] \big) \big].
\end{align*}
Since $T$ is convex, apply Jensen's inequality again and we get
\begin{align*}
    \mb{E}[G(x_{k+1})]&\le \mb{E}[G(y_k+\eta^{\frac{1}{\alpha}} z_k)]=\mb{E}\big[ \mb{E}[G(x_k+\eta^{\frac{1}{\alpha}} z_k'+\eta^{\frac{1}{\alpha}} z_k)|x_k] \big]\\
    &=\mb{E}[G(x_k+2^{\frac{1}{\alpha}}\eta^{\frac{1}{\alpha}} \bar{z}_k)|x_k] ,
\end{align*}
where $z_k'$ is the $\alpha$-stable random vector with density $p_1^{(\alpha)}$, which is independent to $x_k,z_k$ and the last identity follows from the self-similarity of $\alpha$-stable process with $\bar{z}_k\sim p_1^{(\alpha)}$ which is independent to $x_k$. 
\end{proof}
\begin{lemma}\label{lem:frac_proximal_cauchy_moment_growth}
    Suppose $(x_k,y_k)_k$ are the iterates of the stable proximal sampler with parameter $\alpha$, step size $\eta$ and target density $\pi^X \propto \exp(-V)$ satisfies
    $$\vert \nabla V(x)\vert \leq \frac{(d+\nu_2)\vert x\vert}{1 + \vert x\vert^2} \quad \text{and} \quad \Delta V(x)  \leq \frac{(d + \nu_2)^2}{1 + \vert x \vert^2},$$
    for some $\nu_2\in (0,\alpha)$ and for all $x \in \mb{R}^d$. Let $G(x) = \exp(\kappa V(x))$ with $$\kappa \in (\nu_2(d+\nu_2)^{-1},\alpha(d+\nu_2)^{-1}).$$ Then, for every $k \geq 0$ and for all $r>0$,
    \begin{equation}\label{eq:frac_singlestep_moment}
        \mb{E}[G(x_{k+1})] \leq (1+r)^{\frac{\kappa(d+\nu_2)}{2}}\mb{E}[G(x_k)]+2^{\frac{\kappa(d+\nu_2)}{\alpha}}\eta^{\frac{\kappa(d+\nu_2)}{\alpha}}(1+r^{-1})^{\frac{\kappa(d+\nu_2)}{2}}m^{(\alpha)}_{\kappa(d+\nu_2)},
    \end{equation}
    where $m^{(\alpha)}_{\kappa(d+\nu_2)}=\mb{E}[|{z}_k|^{\kappa(d+\nu_2)}]$ with ${z}_k$ being an $\alpha$-stable random vector with density $p_1^{(\alpha)}$. Moreover, for every $N\ge 0$,
\begin{align}\label{eq:lower trajectory}
    \mb{E}[G(x_N)]\lesssim \mb{E}[G(x_0)]+ m^{(\alpha)}_{\kappa(d+\nu_2)} N^{\frac{\kappa(d+\nu_2)}{2}+1} \eta^{\frac{\kappa(d+\nu_2)}{\alpha}},
\end{align}
where $\lesssim$ is hiding a uniform positive constant factor.
\end{lemma}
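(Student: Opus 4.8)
The plan is to combine the one-step estimate of Lemma~\ref{lem:frac_prox_g_rec} with an elementary sub-additivity argument that crucially exploits the fact that $G$ grows polynomially of degree $s:=\kappa(d+\nu_2)/2$, which is strictly less than $\alpha/2\le 1$ under the assumed range of $\kappa$, and then to iterate the resulting linear recursion. For the single-step bound \eqref{eq:frac_singlestep_moment}: Lemma~\ref{lem:frac_prox_g_rec} gives $\mb{E}[G(x_{k+1})]\le\mb{E}[G(x_k+2^{1/\alpha}\eta^{1/\alpha}z_k)]$ with $z_k\sim p_1^{(\alpha)}$ independent of $x_k$; integrating $|\nabla V(x)|\le(d+\nu_2)|x|/(1+|x|^2)$ along rays yields $G(x)\le(1+|x|^2)^{s}$, with equality in the generalized Cauchy case $V=V_\nu$ that this lemma is aimed at. For any $x,u$ and $r>0$, Young's inequality gives $1+|x+u|^2\le(1+r)(1+|x|^2)+(1+r^{-1})|u|^2$, and raising to the power $s\in(0,1)$ via $(a+b)^s\le a^s+b^s$ gives $G(x+u)\le(1+r)^{s}G(x)+(1+r^{-1})^{s}|u|^{2s}$. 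Plugging $u=2^{1/\alpha}\eta^{1/\alpha}z_k$, so that $|u|^{2s}=2^{\kappa(d+\nu_2)/\alpha}\eta^{\kappa(d+\nu_2)/\alpha}|z_k|^{\kappa(d+\nu_2)}$, taking expectations, and using independence together with $\mb{E}[|z_k|^{\kappa(d+\nu_2)}]=m^{(\alpha)}_{\kappa(d+\nu_2)}<\infty$ — finite precisely because $\kappa(d+\nu_2)<\alpha$ — yields \eqref{eq:frac_singlestep_moment}.

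For the trajectory bound \eqref{eq:lower trajectory}, set $a_k:=\mb{E}[G(x_k)]$ and $C_\eta:=2^{\kappa(d+\nu_2)/\alpha}\eta^{\kappa(d+\nu_2)/\alpha}m^{(\alpha)}_{\kappa(d+\nu_2)}$. Fix a horizon $N$ and apply \eqref{eq:frac_singlestep_moment} at every step with the single choice $r=1/N$; since $(1+1/N)^{s}\le e$ and $(1+N)^{s}\le 2N^{s}$ for $N\ge 1$, this reads $a_{k+1}\le(1+1/N)^{s}a_k+2N^{s}C_\eta$. Unrolling over $k=0,\dots,N-1$,
\[
a_N\le(1+1/N)^{sN}a_0+2N^{s}C_\eta\sum_{j=0}^{N-1}(1+1/N)^{sj}\le e\,a_0+2e\,N^{s+1}C_\eta .
\]
Since $2^{\kappa(d+\nu_2)/\alpha}\le 2$ and $s+1=\kappa(d+\nu_2)/2+1$, the right-hand side is $\lesssim\mb{E}[G(x_0)]+m^{(\alpha)}_{\kappa(d+\nu_2)}N^{\kappa(d+\nu_2)/2+1}\eta^{\kappa(d+\nu_2)/\alpha}$, which is \eqref{eq:lower trajectory}.

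The delicate point — and the reason one cannot simply mimic the generator-based arguments of Lemmas~\ref{lem:LD_cauchy_moment_growth}--\ref{lem:proximal_cauchy_moment_growth} — is that the $\alpha$-stable increments $z_k$ have no exponential moments, so bounding the increment of $V$ multiplicatively and then taking $\mb{E}[e^{c|z_k|}]$ is hopeless; the whole argument only closes because $G$ grows with polynomial degree $s<1$, which is exactly what legitimizes the sub-additivity step and what keeps $\mb{E}[|z_k|^{\kappa(d+\nu_2)}]$ finite. The rest is bookkeeping: choosing $r=1/N$ uniformly across all steps so that the per-step error accumulates only as $N^{s+1}$ rather than exponentially in $N$.
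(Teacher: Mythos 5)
Your proposal is essentially the paper's own proof: both invoke Lemma~\ref{lem:frac_prox_g_rec}, bound $G(x)\le(1+\vert x\vert^2)^{s}$ with $s=\kappa(d+\nu_2)/2<\alpha/2\le 1$, apply Young's inequality followed by the subadditivity of $t\mapsto t^{s}$, and unroll the resulting linear recursion with $r=\Theta(1/N)$ (the paper takes $r=2/(\kappa(d+\nu_2)N)$, you take $r=1/N$; both give the stated $N^{s+1}$ growth). The one subtlety you correctly flag — that closing the recursion in terms of $\mb{E}[G(x_k)]$ rather than $\mb{E}[(1+\vert x_k\vert^2)^{s}]$ implicitly uses the reverse comparison $(1+\vert x\vert^2)^{s}\le G(x)$, exact for the generalized Cauchy target — is present in the paper's argument as well, so the two proofs match in both structure and level of rigor.
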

\begin{proof}[Proof of Lemma \ref{lem:frac_proximal_cauchy_moment_growth}] 
Without loss of generality assume $V(0) = 0$. Then, we have that,
$$V(x) = \int_{0}^1\langle x,\nabla V(tx)\rangle\dee t \leq (d+\nu_2)\int_{0}^1\frac{t\vert x\vert}{1 + \vert tx\vert^2} \dee t = \frac{d+\nu_2}{2}\ln(1 + \vert x\vert^2).$$    Therefore $G(x)=\exp(\kappa V(x))\le (1+|x|^2)^{\kappa(d+\nu_2)/2}$, Since $\kappa\in ( \nu_2(d+\nu_2)^{-1}, \alpha(d+\nu_2)^{-1})$, $G(x)= \mathcal{O}(|x|^{\kappa(d+\nu_2)})$ when $|x|\gg 1$ and $\mb{E}[G(x_k+2^{\frac{1}{\alpha}}\eta^{\frac{1}{\alpha}} {z}_k)]$ in Lemma \ref{lem:frac_prox_g_rec} is finite.  We have
\begin{align*}
    &\mb{E}[G(x_k+2^{\frac{1}{\alpha}}\eta^{\frac{1}{\alpha}} {z}_k)]\\
    \le &  \mb{E}[ (1+|x_k+2^{\frac{1}{\alpha}}\eta^{\frac{1}{\alpha}} {z}_k|^2)^{\frac{\kappa(d+\nu_2)}{2}} ]\\
    \le & \mb{E}[ (1+(1+r)|x_k|^2+4^{\frac{1}{\alpha}}\eta^{\frac{2}{\alpha}}(1+r^{-1})|{z}_k|^2)^{\frac{\kappa(d+\nu_2)}{2}} ]\\
    \le & (1+r)^{\frac{\kappa(d+\nu_2)}{2}}\mb{E}[G(x_k)]+2^{\frac{\kappa(d+\nu_2)}{\alpha}}\eta^{\frac{\kappa(d+\nu_2)}{\alpha}}(1+r^{-1})^{\frac{\kappa(d+\nu_2)}{2}}\mb{E}[|{z}_k|^{\kappa(d+\nu_2)}]\\
    \le & (1+r)^{\frac{\kappa(d+\nu_2)}{2}}\mb{E}[G(x_k)]+2^{\frac{\kappa(d+\nu_2)}{\alpha}}\eta^{\frac{\kappa(d+\nu_2)}{\alpha}}(1+r^{-1})^{\frac{\kappa(d+\nu_2)}{2}}m^{(\alpha)}_{\kappa(d+\nu_2)},
\end{align*}
where the first inequality follows from the Young's inequality and $m^{(\alpha)}_{\kappa(d+\nu_2)}=\mb{E}[|{z}_k|^{\kappa(d+\nu_2)}]$ with ${z}_k$ being an $\alpha$-stable random vector with density $p_1^{(\alpha)}$. \eqref{eq:frac_singlestep_moment} follows from Lemma \ref{lem:frac_prox_g_rec}. Furthermore, by induction we have
\small{
\begin{align*}
    \mb{E}[G(x_N)]&\le (1+r)^{\kappa(d+\nu_2)N/2} \mb{E}[G(x_0)] \\
    &\quad+\frac{ (1+r)^{\kappa(d+\nu_2)N/2}-1}{(1+r)^{\kappa(d+\nu_2)/2}-1} 2^{\frac{\kappa(d+\nu_2)}{\alpha}}\eta^{\frac{\kappa(d+\nu_2)}{\alpha}}(1+r^{-1})^{\frac{\kappa(d+\nu_2)}{2}}m^{(\alpha)}_{\kappa(d+\nu_2)}.
\end{align*}
}
Pick $r=\frac{2}{\kappa(d+\nu_2)N}$ and \eqref{eq:lower trajectory} is proved.
\end{proof}

\begin{proof}[Proof of Theorem \ref{thm:fractional_proximal_cauchy_lowerbound}]
To apply Lemma \ref{lem:tv_lower_bound}, we choose $G(x)=\exp(\kappa {V}(x))$ with $\kappa\in (\nu_2(d+\nu_2)^{-1},\alpha(d+\nu_2)^{-1})\subset(0,1)$. Without loss of generality assume $V(0) = 0$. Via Assumption~\ref{assump:V_growth_bound}, we have the estimates for $V$,
$$V(x) = \int_{0}^1\langle x,\nabla V(tx)\rangle\dee t \ge (d+\nu_1)\int_{0}^1\frac{t\vert x\vert}{1 + \vert tx\vert^2} \dee t = \frac{d+\nu_1}{2}\ln(1 + \vert x\vert^2).$$
By Lemma~\ref{lem:cauchy_lower_tail} we have
$$\pi^X(G(x) \geq y) \geq \pi^X\left(|x| \geq y^{\frac{1}{\kappa(d+\nu_1)}}\right) \geq C_{\nu_1} d^{\frac{\nu_1}{2}}\left(1 + y^{\frac{-2}{\kappa(d+\nu_1)}}\right)^{-\frac{d+\nu_2}{2}}y^{\frac{-\nu_2}{\kappa(d+\nu_1)}}.$$  
We then invoke Lemma \ref{lem:tv_lower_bound} and Lemma \ref{lem:frac_proximal_cauchy_moment_growth} to obtain
\begin{align*}
&\TV(\rho_N^X,\pi^X)\\
\gtrsim \, & \sup_{y\ge 1} C_{\nu_1} d^{\frac{\nu_1}{2}}\left(1 + y^{\frac{-2}{\kappa(d+\nu_1)}}\right)^{-\frac{d+\nu_2}{2}}y^{\frac{-\nu_2}{\kappa(d+\nu_1)}}-\frac{\mb{E}[G(x_0)]+ m^{(\alpha)}_{\kappa(d+\nu_2)} N^{\frac{\kappa(d+\nu_2)}{2}+1} \eta^{\frac{\kappa(d+\nu_2)}{\alpha}}  }{y}.
\end{align*}
The fact that $\kappa\in ( \nu_2(d+\nu_1)^{-1}, \alpha(d+\nu_2)^{-1})$ ensures that the supremum on the right side is always positive. In particular, picking $y$ such that 
$$
y^{1-\frac{\nu_2}{\kappa(d+\nu_1)}}=\Theta\Big( C_{\nu_1}^{-1}d^{-\frac{\nu_2}{2}}\big( \mb{E}[G(x_0)]+m^{(\alpha)}_{\kappa(d+\nu_2)}N^{\frac{\kappa(d+\nu_2)}{2}+1}\eta^{\frac{\kappa(d+\nu_2)}{\alpha}}  \big)\Big),
$$ 
we obtain that
\begin{align*}  
&\TV(\rho^X_N,\pi^X)\\
\gtrsim\,& C_{\nu_1}^{\frac{\kappa(d+\nu_1)}{\kappa(d+\nu_1)-\nu_2}} d^{\frac{\kappa(d+\nu_1)\nu_2}{2\kappa(d+\nu_1)-2\nu_2}} \big( \mb{E}[G(x_0)]+m^{(\alpha)}_{\kappa(d+\nu_2)}N^{\frac{\kappa(d+\nu_2)}{2}+1}\eta^{\frac{\kappa(d+\nu_2)}{\alpha}}  \big)^{-\frac{\nu_2}{\kappa(d+\nu_1)-\nu_2}},
\end{align*}
where $\gtrsim$ is hiding a uniform positive constant factor.
Therefore, for any $\alpha\in( \frac{\nu_2(d+\nu_2)}{d+\nu_1}, 2]$ and $\delta\in (0,\alpha-\frac{\nu_2(d+\nu_2)}{d+\nu_1})$, we can choose $\kappa=\frac{\alpha-\delta}{d+\nu_2}\in (\frac{\nu_2}{d+\nu_1},\frac{\alpha}{d+\nu_2})$ and get that
\begin{align*}  
&\TV(\rho^X_N,\pi^X)\\
\ge \,&C_{\nu_1,\nu_2,\delta} d^{\frac{\nu_2(\alpha-\delta)(d+\nu_1)}{2(\alpha-\delta)(d+\nu_1)-2\nu_2(d+\nu_2)}}\big( \mb{E}[G(x_0)]+m^{(\alpha)}_{\alpha-\delta}N^{\frac{\alpha-\delta}{2}+1}\eta^{\frac{\alpha-\delta}{\alpha}}  \big)^{-\frac{\nu_2(d+\nu_2)}{(\alpha-\delta)(d+\nu_1)-\nu_2(d+\nu_2)}} .
\end{align*}
Theorem \ref{thm:fractional_proximal_cauchy_lowerbound} then follows by taking $\tau=\alpha-\delta$.
\end{proof}

\subsection{Further Discussions on Lower bounds of the stable proximal sampler} \label{sec:furtherlb}
To derive a lower bound for the stable proximal sampler with parameter $\alpha$, it is worth mentioning that there is an extra difficulty applying our method when $\nu\ge \alpha$. Recall that when $\nu\in (0,\alpha)$, $\pi_\nu$ has heavier tail than $\rho_k^X$ does. Therefore, when we apply 
\begin{align}\label{eq:temptv}
\TV(\rho_k^X,\pi_\nu)\ge |\pi_\nu(G\ge y)-\rho_k^X(G\ge y)|,\end{align}
to study the lower bound, it suffices to derive a lower bound on $\pi_\nu(G\ge y)$, and an upper bound on $\rho_k^X(G\ge y)$ which is smaller than the lower bound on $\pi_\nu(G\ge y)$. Deriving these bounds is not too hard: the lower bound can be obtained by looking at an explicit integral against $\pi_\nu$ directly and the upper bound is derived based on the fractional absolute moment accumulation of the isotropic $\alpha$-stable random variables along the stable proximal sampler.

However, when $\nu\ge \alpha$, we expect that $\rho_k^X$ has heavier tail than $\pi_\nu$. Therefore, to apply~\eqref{eq:temptv}, we need to find an upper bound on $\pi_\nu(G\ge y)$, and a lower bound on $\rho_k^X(G\ge y)$ which is smaller than the upper bound on $\pi_\nu(G\ge y)$. Notice that $\rho_k^X(G\ge y)$ is a quantity varying along the trajectory of the stable proximal sampler. Deriving a lower bound along the trajectory is essentially more challenging than deriving an upper bound.

In order to derive a satisfying lower bound in this case, it hence remains to characterize the stable proximal sampler as an approximation of an appropriate gradient flow, just as that the Brownian-driven proximal sampler can be interpret as the entropy-regularized JKO scheme in \cite{chen2022improved}; see also Section~\ref{sec:discuss}. To understand this kind of gradient flow approximations itself is an interesting future work as it may help us to understand and characterize the class of MCMC samplers that utilize heavy-tail samples to approximate lighter-tail target densities, which is non-standard compared to commonly used MCMC samplers such as ULA, MALA, etc.

\end{document}